\newtheorem{theorem}{Theorem}[section]
\newtheorem{lemma}[theorem]{Lemma}
\newtheorem{proposition}[theorem]{Proposition}
\newtheorem{assumption}{Assumption}
\theoremstyle{definition}
\newtheorem{definition}[theorem]{Definition}
\newtheorem{remark}[theorem]{Remark}
\newtheorem{example}[theorem]{Example}
\newcommand{\R}{\mathbb{R}}
\newcommand{\N}{\mathbb{N}}
\newcommand{\X}{\mathbb{X}}
\newcommand{\Y}{\mathbb{Y}}
\newcommand{\E}{\mathbb{E}}
\newcommand{\Ao}{\mathbf{A}}
\newcommand{\To}{\mathbf{T}}
\newcommand{\Mo}{\mathbf{M}}
\newcommand{\Bo}{\mathbf{B}}
\newcommand{\Do}{\mathbf{D}}
\newcommand{\doo}{\mathbf{d}}
\newcommand{\Rc}{\mathcal{R}}
\newcommand{\Lc}{\mathcal{L}}
\newcommand{\Dc}{\mathcal{D}}
\newcommand{\Tc}{\mathcal{T}}
\newcommand{\la}{\lambda}
\newcommand{\ka}{\kappa}
\newcommand{\al}{\alpha}
\newcommand{\ph}{\varphi}
\newcommand{\ga}{\gamma}
\newcommand{\La}{\Lambda}
\newcommand{\Ga}{\Gamma}
\newcommand{\uo}{\mathbf u}
\newcommand{\vo}{\mathbf v}
\newcommand{\kao}{{\boldsymbol \kappa}}
\newcommand{\qone}{s}
\setlist[enumerate]{label = (\alph*),topsep=0em,parsep=0em}
\setlist[itemize]{topsep=0em,parsep=0em}
\newcommand\norm[1]{{\left\Vert#1\right\Vert}}
\newcommand\inner[2]{\left\langle#1,#2\right\rangle}
\newcommand\abs[1]{\left\vert#1\right\vert}
\newcommand\snorm[1]{\Vert#1\Vert}
\newcommand\sinner[2]{\langle#1,#2\rangle}
\newcommand\sabs[1]{{\lvert#1\rvert}}
\newcommand{\sign}{\operatorname{sign}}
\newcommand{\dom}{\operatorname{dom}}
\newcommand{\ran}{\operatorname{ran}}
\DeclareMathOperator*{\argmin}{argmin}
\newcommand{\prox}{\operatorname{prox}}
\newcommand{\id}{\operatorname{id}}
\newcommand{\fix}{\operatorname{Fix}}
\newcommand{\Lip}{\operatorname{Lip}}
\newcommand{\mpi}{+}
\numberwithin{equation}{section}
\numberwithin{theorem}{section}
\colorlet{lred}{red!40}
\colorlet{lgreen}{green!40}
\colorlet{lblue}{blue!40}
\title{Convergence of non-linear diagonal frame filtering for regularizing inverse problems}
\date{Initial submission August 30, 2023\\ This version: February 24, 2024}
\author{Andrea Ebner}
\affil{Department of Mathematics, University of Innsbruck\authorcr
Technikerstrasse 13, 6020 Innsbruck, Austria\authorcr
E-mail:  \texttt{andrea.ebner@uibk.ac.at}
 }
\author{Markus Haltmeier}
\affil{Department of Mathematics, University of Innsbruck\authorcr
Technikerstrasse 13, 6020 Innsbruck, Austria\authorcr
E-mail:  \texttt{markus.haltmeier@uibk.ac.at}
 }
\begin{document}

\maketitle

\begin{abstract}
Inverse problems are key issues in several scientific areas, including signal processing and medical imaging. Since inverse problems typically suffer from instability with respect to data perturbations, a variety of regularization techniques have been proposed. In particular, the use of filtered diagonal frame decompositions has proven to be effective and computationally efficient. However, existing convergence analysis applies only to linear filters and a few non-linear filters such as soft thresholding. In this paper, we analyze filtered diagonal frame decompositions with general non-linear filters. In particular, our results generalize SVD-based spectral filtering from linear to non-linear filters as a special case. As a first approach, we establish a connection between non-linear diagonal frame filtering and variational regularization, allowing us to use results from variational regularization to derive the convergence of non-linear spectral filtering. In the second approach, as our main theoretical results, we relax the assumptions involved in the variational case while still deriving convergence. Furthermore, we discuss connections between non-linear filtering and plug-and-play regularization and explore potential benefits of this relationship.

\medskip\noindent\textbf{Keywords:} Inverse problems, non-linear filtering, diagonal frame decomposition, regularization, convergence analysis, spectral filtering.
\end{abstract}

\section{Introduction}

Let $\Ao \colon \X \to \Y$ be a bounded linear operator between two real Hilbert spaces $\X$ and $\Y$. We consider the inverse problem of recovering  $x^\mpi \in \X$ from noisy data
\begin{equation} \label{eq:ip}
y^\delta  = \Ao x^\mpi + z \,,
\end{equation}
where $z$ is the data perturbation with $\snorm{z} \leq \delta$ for some noise level $\delta > 0$. Inverting the operator $\Ao$ is often ill-posed in the sense that the the Moore-Penrose inverse $\Ao^\mpi$ is discontinuous. Thus, small errors in the data are significantly amplified by use of exact solution methods. To address this problem, regularization methods have been developed with the aim of finding approximate but stable solution strategies \cite{Be17,En96,rieder2013keine}.

\subsection{Diagonal frame filtering}

Diagonal frame decompositions in combination with regularizing filters are a flexible and efficient regularization concept  for \eqref{eq:ip}. Suppose $\Ao$ has a diagonal frame decomposition (DFD) giving the representations
\begin{align*}
\Ao &=
\sum_{\la \in \La} \ka_\la \inner{\,\cdot\,}{u_\la} \bar{v}_\la
\\
\Ao^\mpi &= \sum_{\la \in \La} \ka_\la^{-1} \sinner{\,\cdot\,}{v_\la}  \bar{u}_\la
\,.
\end{align*}
Here $(u_\la)_{\la \in \La}$ and $(v_\la)_{\la \in \La}$ are frames of  $\ker(\Ao)^\bot$ and $\overline{\ran(\Ao)}$, respectively,  with corresponding  dual frames $(\bar{u}_\la)_{\la \in \La}$ and $(\bar{v}_\la)_{\la \in \La}$.
In the special case that $(u_\la)_\la$ and  $(v_\la)_\la$ are orthonormal, $\Ao = \sum_{\la \in \La} \ka_\la \inner{\cdot}{u_\la} v_\la$ is a singular value decomposition (SVD) for  $\Ao$. More general frame decompositions have first been  studied by Candés and Donoho \cite{Ca02, donoho1995nonlinear} in the context of statistical estimation and recently in \cite{frikel2018efficient,Fr19,ebner2023regularization,goppel2023translation,quellmalz2023frame,hubmer2021frame,hubmer2022regularization} in  the context of regularization theory.   Specifically, in  \cite[Theorem 2.10]{ebner2023regularization} it has been shown that if $(\ka_\la)_{\la \in \La}$ accumulate at zero, then $\Ao^\mpi$ is unbounded.  In such a situation, regularization  methods have to applied for the solution of \eqref{eq:ip}.

Linear filtered DFD methods  are based  on a regularizing filter family $(f_\al)_{\al >0}$   and  defined by
$\Bo_{\al}(y^\delta) \coloneqq \sum_{\la \in \La} f_\al(\ka_\la) \sinner{y^\delta}{v_\la} \bar{u}_\la
$; see \cite{ebner2023regularization}. Main theoretical questions are the stability of $\Bo_{\al}$ and the convergence for $\delta, \al \to 0$. Each factor $f_\al(\ka_\la) \sinner{y^\delta}{v_\la}$ is a damped version of the exact coefficient inverse $\ka_\la^{-1} \sinner{y^\delta}{v_\la}$. The  filtering process $\ph_\al  ( \ka_\la, \sinner{y^\delta}{v_\la}) =  (f_\al(\ka_\la) \ka_\la) \cdot  \sinner{y^\delta}{v_\la}$ is  linear in $\sinner{y^\delta}{v_\la}$ and represented  by the damping factors $f_\al(\ka_\la) \ka_\la$.  Further note  that $\Bo_{\al}$ reduces to the well-known spectral filtering technique which  in the regularization context is also referred  to as filter-based regularization. The convergence analysis in this special case is  well known and can be found for example in \cite{En96,groetsch1984theory}. The more general case of filtered  DFDs has been first analyzed  in \cite{ebner2023regularization} and later in \cite{hubmer2022regularization}.

\subsection{Non-linear extension}

A major drawback of linear regularizing filters is that the damping factor depends only on the quasi-singular values and is independent of the data. In practice, certain filters that depend non-linearly on $\sinner{y^\delta}{v_\la}$ tend to perform better in filtering out noise than linear methods; see \cite{An21,Ga98,mallat1999wavelet}. The aim  of this paper is to analyze general non-linear frame-based diagonal filtering
\begin{equation} \label{eq:non-linear}
\Bo_\al (y^\delta)
 = \sum_{\la \in \La} \ka_\la^{-1} \ph_\al(\ka_\la,\sinner{y^\delta}{v_\la}) \bar u_\la  \,,
\end{equation}
where  $(\ph_\al)_{\al > 0}$ is  a non-linear  filter rigorously introduced in Definition~\ref{def:filter} below.
The reconstruction mappings $\Bo_\al \colon \Y \to \X$ come with a clear interpretation:  In order to avoid  noise amplification due to  multiplication with  $\ka_\la^{-1}$, the filter damps each coefficient according to damping factors $\ph_\al(\ka_\la,\sinner{y^\delta}{v_\la})$ prior to the inversion.
By taking $\ph_\al  ( \ka_\la, \sinner{y^\delta}{v_\la}) =  (f_\al(\ka_\la) \ka_\la) \cdot  \sinner{y^\delta}{v_\la} $ we recover linear filtering analyzed in \cite{ebner2023regularization}.   An example of a non-linear filter is the soft thresholding filter defined by
$\ph_{\al}(\ka,c) =  \sign{(c)}(\abs{c} - \al/\ka)_+$ where $(\cdot)_+ \coloneqq \max \{ \cdot, 0\} $.
In  \cite{Fr19} it has been shown that this filter  yields  a convergent regularization method. Affine filters applied with the SVD have recently been studied in \cite{arndt2023invertible}. Our paper extends these special cases to general classes of non-linear filters.

Specifically, we establish assumptions on the non-linear regularizing filter $(\ph_{\al})_{\al > 0}$ to demonstrate stability and convergence  in the following sense :
\begin{itemize}
\item \textit{Stability: For fixed $\al > 0$ let $y^\delta, y^k \in \Y$ be with  $y^k \to y^\delta$}. Then we have $\Bo_\al (y^k) \rightharpoonup \Bo_\al (y^\delta)$ as $k \to \infty$.
\item \textit{Convergence:} Consider positive null sequences $(\delta_k)_k, (\al_k)_k$ with $\delta_k^2/\al_k \to 0$. Let  $y \in \ran(\Ao)$ and  $(y^k)_k \in \Y^\N$ with  $\snorm{y^k -y} \leq \delta_k$.
Then $\Bo_{\al_k} (y^k) \rightharpoonup \Ao^\mpi y$ as $k \to \infty$.
\end{itemize}

As our main theoretical findings, we derive such results for a broader class of non-linear filters by exploiting the specific diagonal structure of \eqref{eq:non-linear}; see Section~\ref{ssec:case2}. In addition, for a broad class of homogeneous filters, such results will be derived by a reduction to the well-established case of variational regularization \cite{scherzer2009variational}; see Section~\ref{ssec:case1}. Note that even in the case where we reduce our analysis to variational regularization, the non-linear filtered regularization technique is related but different from variational regularization with separable constraints \cite{grasmair2011linear,bredies2009regularization}. This is discussed in detail for  special case of soft thresholding in \cite{Fr19}, where the  diagonal frame filtering has been opposed to frame-analysis and frame-synthesis regularization.

Note that just as we can express the filtered DFD as a variational regularization, we can also express it as a plug-and-play (PnP) regularization. While convergence results in the context of regularization methods for PnP have not been extensively studied, with the first in-depth study presented in \cite{ebner2022plug}, it is important to acknowledge that the assumptions for convergence of PnP are particularly stringent. A specific class of nonlinear filters conform to the PnP regularization by satisfying the necessary conditions, and as a result the established stability and convergence results hold, even showing stronger stability in these cases. Conversely  because the assumptions of the analysis of the filtered DFD are quite general, it covers various PnP methods that aren't addressed in the theory presented in \cite{ebner2022plug}. Consequently, as discussed in Section~\ref{sec:pnp}, this paper even contributes the regularization theory of PnP.

\subsection{Outline}

In Section~\ref{sec:prelim} we present preliminaries in the form of a notion, auxiliary results, and some technical lemmas. In Section~\ref{sec:non-linear} we rigorously introduce non-linear filters and non-linear filtered diagonal frame decompositions. The main results of this paper are presented in Section~\ref{sec:conv}, where we provide two approaches to proving convergence: by linking to existing theories of variational regularization for stationary filters, and by a direct proof in the general case.Further, in Section~\ref{sec:pnp} we analyze connections between filtered DFD and PnP regularization. In Section~\ref{sec:conclusion}, we summarize our findings and offer some future research directions.

\section{Preliminaries}\label{sec:prelim}

Let $\X$, $\Y$ be Hilbert spaces. If $\Bo \colon \dom(\Bo) \subseteq \X \rightarrow \Y$, then  $\dom(\Bo)$ denotes the domain and $\ran(\Bo) = \Bo(\dom(\Bo))$ the range of $\Bo$.   If $\Bo$  is linear bounded with $\dom(\Bo) =\X$ we write $\Bo \colon \dom(\Bo^\mpi) \subseteq \Y  \to \X$ with $\dom(\Bo^\mpi) \coloneqq \ran(\Bo) \oplus \ran(\Bo)^\bot$ for the Moore-Penrose inverse of $\Bo$.
Recall that   $\Phi \colon \X \to \X$ is nonexpansive if  $\forall x, y \in \X \colon \norm{\Phi(x)-\Phi(y)} \leq \norm{x-y} $.

\subsection{Functionals and proximity operators}

Functionals on $\X$ will be written as $\Rc \colon \X \rightarrow \R \cup \{\infty\}$ and we  usually use $r$ or $\qone$ to denote a functional when $\X = \R$. We define the domain of  $\Rc$ by $\dom(\Rc) \coloneqq \{x \in \X \mid \Rc(x) < \infty \}$ and for $q \in \R$ we define the lower level set of $\Rc$ with bound $q$ by $ \Lc(\Rc, q) \coloneqq \{x \in \X \mid \Rc(x) \leq q \}$.
The functional $\Rc$ is called proper if $\dom(\Rc) \neq \emptyset$ and convex if $\Rc(t x + (1-t)y) \leq t \Rc(x) + (1-t) \Rc(y)$  for all $x, y \in \X$ and $t \in [0,1]$.  It is called lower semi-continuous if  $\Lc(\Rc,q)$ is  closed
for all $q \in \R$.  For convex functions sequentially, weak as well as weak sequential lower semi-continuity are equivalent to strong lower semi-continuity. We call $\Rc$ norm-coercive if $\Rc(x^k) \to \infty$ for all  $(x^k)_{k\in \N} \in \dom(\Rc)^\N$ with $\snorm{x^k} \to \infty$.

We define $\Ga_0(\X)$ as the set of all $\Rc\colon \X \rightarrow \R \cup \{\infty\}$ that are proper, convex and lower semi-continuous.
The subdifferential of $\Rc \in \Ga_0(\X)$ is a set-valued operator $\partial \Rc \colon \X \rightarrow 2^\X$ defined by
\begin{equation*}
\partial \Rc ( x) \coloneqq  \{u \in \X \mid \forall y \in \X \colon \inner{y-x}{u} + \Rc(x) \leq \Rc(y) \} \,.
\end{equation*}
The elements of $\partial \Rc(x)$ are called subgradients of $\Rc$ at $x$.

Let $\La$ be a countable index set and  $r_\la \colon \R \to \R \cup \{ \infty \}$ be non-negative for   $\la \in \La$.  The functional $\Rc = \bigoplus_{\la \in \La} r_\la \colon \ell^2(\La) \to \R \cup \{\infty\}$  is defined by $\Rc((x_\la)_\la ) = \sum_{\la \in \La} r_\la(x_\la) $.

For $\Rc \in \Ga_0(\X)$, the proximity operator  $\prox_{\Rc} \colon \X \to \X $ is well defined by $\prox_{\Rc}(x) \coloneqq  \argmin_{y \in \X}  \norm{x-y}^2/2 + \Rc(y)$.

\begin{lemma}[Properties of proximity operators]\label{lem:prox}
Let $\Rc \in \Ga_0(\X)$ and  $\varphi \colon \R \to \R$.

\begin{enumerate}
\item \label{lem:prox1} $\prox_{\Rc} = (\id + \partial \Rc)^{-1}$.
\item \label{lem:prox2}  $\prox_{\Rc}$ is nonexpansive.
\item \label{lem:prox3}  Let $\Dc \colon \X \rightarrow \R$ be convex and Fr\'{e}chet differentiable. Then, for all $\ga \neq 0$,
\begin{equation*}
	 \argmin ( \Dc + \Rc )= \fix(\prox_{\ga \Rc} \circ (\id - \ga \nabla \Dc)) \,.
 \end{equation*}
 \item \label{lem:prox4} If $\X=\R$ then $\dom(\Rc)$ is a closed interval and $\Rc$ is continuous on $\dom(\Rc)$.

 \item \label{lem:prox5}  $\exists \qone \in \Ga_0(\R) \colon \varphi = \prox_\qone$   $\Leftrightarrow$  $\phi$ is nonexpansive and increasing.

\item  \label{lem:prox6} Let $\La$ be  at most countable and $(r_\la)_{\la \in \La} \in \Ga_0(\R)^\La$ with $\forall \la\colon r_\la \geq r_\la(0) =0$. Then $\Rc \coloneqq   \bigoplus_{\la \in \La} r_\la$ is contained in $\Ga_0(\ell^2(\La))$,  the proximity operator $\prox_\Rc$ is weakly sequentially continuous, and  $\forall (y_\la)_\la  \in \ell^2(\La) \colon \prox_\Rc ((y_\la)_\la) = (\prox_{r_\la}(y_\la))_\la$.
\end{enumerate}
\end{lemma}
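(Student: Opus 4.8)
The plan is to treat the three assertions in the order in which they depend on one another: first membership in $\Ga_0(\ell^2(\La))$, then the separable formula for $\prox_\Rc$, and finally weak sequential continuity. For membership, properness is immediate from the hypothesis $r_\la(0)=0$, which gives $\Rc(0)=0<\infty$, while nonnegativity of the $r_\la$ keeps $\Rc$ valued in $[0,\infty]$. Convexity holds because $\Rc$ is a sum of the convex maps $(x_\mu)_\mu \mapsto r_\la(x_\la)$, each $r_\la$ being convex and precomposed with the continuous coordinate projection. For lower semi-continuity I would use the identity $\Rc = \sup_{F}\sum_{\la\in F} r_\la(\,\cdot_\la\,)$, the supremum running over finite $F\subseteq\La$; this identity relies on $r_\la\geq 0$. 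Each finite partial sum is lsc as a finite sum of lsc functions precomposed with continuous projections, and a supremum of lsc functions is lsc.

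For the separable formula, the crucial estimate is that the candidate $x^*_\la \coloneqq \prox_{r_\la}(y_\la)$ actually defines an element of $\ell^2(\La)$. Testing the one-dimensional proximal objective $\tfrac12(t-y_\la)^2 + r_\la(t)$ at the competitor $t=0$ and using $r_\la(0)=0$ together with $r_\la\geq 0$ gives $\tfrac12(x^*_\la-y_\la)^2 \leq \tfrac12 y_\la^2$, whence $\sabs{x^*_\la}\leq 2\sabs{y_\la}$, so $\snorm{x^*}\leq 2\snorm{y}$ and $x^*\in\ell^2(\La)$ with $\Rc(x^*)\leq \tfrac12\snorm{y}^2<\infty$. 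Since each summand $\tfrac12(x_\la-y_\la)^2 + r_\la(x_\la)$ is nonnegative and minimized over $\R$ exactly at $x^*_\la$ by definition of $\prox_{r_\la}$, the full objective $\tfrac12\snorm{x-y}^2 + \Rc(x)$ dominates its value at $x^*$ term by term; strong convexity of the quadratic part yields uniqueness, so $\prox_\Rc(y)=x^*=(\prox_{r_\la}(y_\la))_\la$.

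For weak sequential continuity, suppose $y^k \rightharpoonup y$ and set $x^k\coloneqq\prox_\Rc(y^k)$, $x\coloneqq\prox_\Rc(y)$. Weak convergence forces $\sup_k\snorm{y^k}<\infty$, so the bound from the previous step makes $(x^k)_k$ bounded in $\ell^2(\La)$. Weak convergence also gives coordinatewise convergence $y^k_\la\to y_\la$, and since $\prox_{r_\la}$ is nonexpansive (part~\ref{lem:prox2}) hence continuous, the separable formula yields $x^k_\la=\prox_{r_\la}(y^k_\la)\to\prox_{r_\la}(y_\la)=x_\la$ for every $\la$. The last step, which I expect to be the main obstacle, is to upgrade boundedness together with coordinatewise convergence to weak convergence $x^k\rightharpoonup x$: splitting any test vector $w\in\ell^2(\La)$ into a finite block and a tail, the finite block is handled by coordinatewise convergence and the tail by Cauchy--Schwarz using the uniform bound on $\snorm{x^k}$ (together with $\snorm{x}\leq\liminf_k\snorm{x^k}$ from Fatou). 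This standard Hilbert-space fact closes the argument and, incidentally, shows the whole sequence converges weakly without any subsequence extraction.
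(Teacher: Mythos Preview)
Your proposal addresses only part~\ref{lem:prox6}, which is the most substantial assertion; parts~\ref{lem:prox1}--\ref{lem:prox5} are standard textbook facts and you implicitly rely on~\ref{lem:prox2} in your argument, so presumably you regard those as known. The paper itself does not prove the lemma at all: it simply refers the reader to Bauschke--Combettes for all six claims. Your write-up therefore goes beyond the paper in that it supplies a direct, self-contained argument for~\ref{lem:prox6}.

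That argument is correct. The lsc part via the supremum over finite partial sums is the clean way to handle the infinite sum of nonnegative lsc terms. For the separable formula, your comparison of the componentwise proximal objective against the competitor $t=0$ is the right idea and yields $\sabs{x^*_\la - y_\la}\leq \sabs{y_\la}$, hence $x^*\in\ell^2(\La)$; in fact you could sharpen this to $\sabs{x^*_\la}\leq\sabs{y_\la}$ directly from nonexpansiveness of $\prox_{r_\la}$ together with $\prox_{r_\la}(0)=0$ (since $0$ minimizes $r_\la$), but your bound $\sabs{x^*_\la}\leq 2\sabs{y_\la}$ is more than enough. The weak sequential continuity step---boundedness plus coordinatewise convergence implies weak convergence in $\ell^2(\La)$ via the finite-block/tail splitting---is the standard argument and is carried out correctly. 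One small remark: the inequality $\snorm{x}\leq\liminf_k\snorm{x^k}$ that you invoke from Fatou is not actually needed; it suffices to bound the tail contribution by $(\sup_k\snorm{x^k}+\snorm{x})\cdot\snorm{w\,\mathbf{1}_{\La\setminus F}}$, and $\snorm{x}$ is finite since $x=\prox_\Rc(y)$ is already known to lie in $\ell^2(\La)$.
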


Proofs of all claims in Lemma~\ref{lem:prox} can be found in \cite{bauschke2017convex}.

\begin{remark}[Proximity operators on the  real line]

Let $\qone \in \Ga_0(\R)$ with $\qone(0) = 0$. According to Lemma~\ref{lem:prox}\ref{lem:prox5}, $\prox_\qone$ is nonexpansive and increasing. Furthermore, as stated in Lemma~\ref{lem:prox}\ref{lem:prox4}, the domain of $\qone$ is a closed interval that contains zero, and  $\qone$ is continuous on the interior of $\dom(\qone)$. For all $x \in \dom(\qone)$, the subgradients at $x$ form a closed interval, and the set-valued function $x \mapsto \partial \qone(x)$ is increasing. This means that for $x \leq y$, it holds for all $u \in \partial \qone(x)$ and $v \in \partial \qone(y)$ that $u \leq v$. Hence, the same property holds for the inverse of $\prox_\qone$. When we refer to $\min$ or $\max$ of a subdifferential or the inverse of a proximity operator, we are referring to the largest lower or smallest upper bound of the respective interval.

Applying \cite[Equation (2.22)]{bauschke2017convex} with $f=1/2 \abs{\cdot}^2 + s$) one derives
\begin{equation*}
 	 s(x)= \sup_{n \geq 1} \sup_{u_1, \ldots, u_n \in \R} \left\{(x - \prox_\qone(u_n)){u_n} +
 	  \sum_{i = 0}^{n-1} (\prox_\qone(u_{i+1}) - \prox_\qone(u_i)){u_i} \right\} \\
 	 - \frac{1}{2}\abs{x}^2 \,.
\end{equation*}
While this closed form expression for $\qone$ in terms  $\prox_\qone$, this also indicated that such an relation is not very straight-forward.

Figure \ref{fig:prox} shows an example of proximity operators $\phi = \prox_s$ of a functional $\qone \in \Ga_0(\R)$ with $\qone(0) = 0$, highlighting some of its relations.  In particular, it shows how jumps of $\qone$ cause regions where $\phi$ is constant. In particular, if $s$ remains $\infty$, then the proximity operator is constant and thus not surjective.  We also see that prox is monotonically increasing and non-expansive.
\end{remark}

\begin{figure}[htb!]
\centering
\begin{tikzpicture}[>=latex, x=1cm, y=1cm, font=\small, scale=0.8]
\draw[->] (-0.6, 0) -- (3.6, 0) node[right] {$x$};
\draw[->] (0, -0.6) -- (0, 3) node[left] {$\qone(x)$};
\draw[-] (2.2, -0.2) -- (2.2, 0.2);
\draw (1.1,-0.2) node[below] {$\dom(\qone)$};
\draw[dashed, gray] (-0.5, -0.5) -- (3, 3);
\draw[scale=1, domain=-0.5:1.5, smooth, variable=\x, darkgray] plot ({\x}, {1/4*\x*\x});
\draw[scale=1, domain=1.5:2.2, smooth, variable=\x, darkgray] plot ({\x},{2*(\x-1.5)+1/4*9/4});
\draw[-, decorate, decoration={snake, amplitude=2pt}, darkgray](2.2,3)--(3.5,3);
\end{tikzpicture}
\hfill
\begin{tikzpicture}[>=latex, x=1cm, y=0.5cm, font=\small, scale=0.8]
\draw[->] (-0.6, 0) -- (3.6, 0) node[right] {$x$};
\draw[->] (0, -1.2) -- (0, 6) node[left] {$(\id+ \partial s)(x)$};
\draw[-] (2.2, -0.4) -- (2.2, 0.4);
\draw (1.1,-0.2) node[below] {$\dom(\partial\qone)$};
\draw[dashed, gray] (-0.5, -0.5) -- (3, 3);
\draw[scale=1, domain=-0.5:1.5, smooth, variable=\x, darkgray] plot ({\x}, {1/2*\x+\x});
\draw[scale=1, domain=1.5:2.2, smooth, variable=\x, darkgray] plot ({\x},{2+\x});
\draw[-,darkgray] (2.2, 4.2) -- (2.2, 6);
\draw[-, decorate, decoration={snake, amplitude=2pt}, darkgray](2.2,6)--(3.5,6);
\draw[-] (1.5, 2.25) -- (1.5, 3.5);
\end{tikzpicture}
\hfill
\begin{tikzpicture}[>=latex, x=0.5cm, y=1cm, font=\small, scale=0.8]
\draw[->] (-1.2, 0) -- (7.2, 0) node[right] {$x$};
\draw[->] (0, -0.6) -- (0, 3) node[left] {$\prox_\qone(x)$};
\draw[dashed, gray] (-0.5, -0.5) -- (3, 3);
\draw[scale=1, domain=-0.5:1.5, smooth, variable=\x, darkgray] plot ({1/2*\x+\x},\x);
\draw[scale=1, domain=1.5:2.2, smooth, variable=\x, darkgray] plot ({2+\x},{\x});
\draw[-, darkgray](4.2,2.2)--(7,2.2);
\draw[-] (2.25,1.5) -- (3.5,1.5);
\end{tikzpicture}
\caption{Example of  functional and its  proximity operator  on the real line. The dashed line always represents the identity function. The wavy line represents infinity.} \label{fig:prox}
\end{figure}
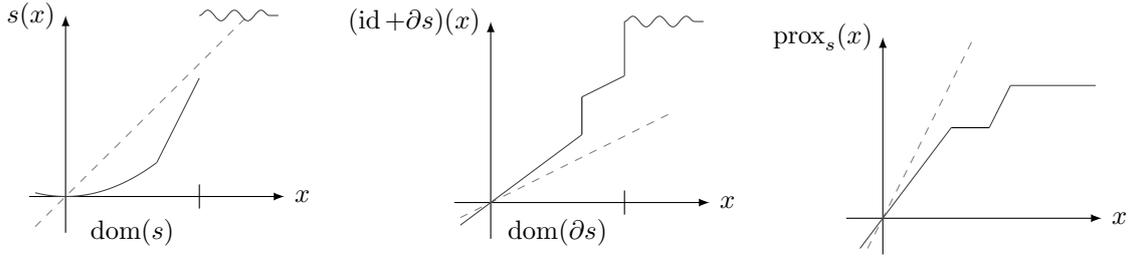

\subsection{Technical Lemmas} \label{sec:lemmas}

In this section we  provide some technical Lemmas for proximity operators on $\R$ and their connection to the associated functionals, that we use for our analysis.

\begin{lemma}\label{lem:inv-conv}
Let $(\ph_\al)_{\al>0}$ be a family of increasing and nonexpansive functions $\ph_\al \colon \R \to \R$ with $\ph_{\al}(0)=0$. The following statements are equivalent:
\begin{enumerate}[label = (\arabic*)]
\item\label{lem:inv-conv1} $\forall x \in \R \colon \lim_{\al \to 0} \ph_\al(x) = x$.
\item\label{lem:inv-conv2} $\forall x \in \R \colon \lim_{\al \to 0} \ph_\al^{-1}(x) = \{x\}$.
\end{enumerate}
Here the limit  in \ref{lem:inv-conv2} is defined by $\lim_{\al \to 0} \ph_\al^{-1}(x) = \{x\} :\Leftrightarrow \lim_{\al \to 0} \sup_{y \in \ph_\al^{-1}(x)}\abs{y-x} = 0$, and we use the convention $\sup_{y \in \emptyset}\abs{y-x} \coloneqq \infty$.
\end{lemma}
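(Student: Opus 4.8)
The plan is to argue directly from monotonicity, the $1$-Lipschitz property (so in particular continuity), and the intermediate value theorem; the proximity-operator representation of Lemma~\ref{lem:prox}\ref{lem:prox5} is available but not needed here. As a preliminary observation I would record that, since each $\ph_\al$ is increasing and continuous, the preimage $\ph_\al^{-1}(x) = \{y \in \R \mid \ph_\al(y) = x\}$ is for every $x$ a closed (possibly empty, possibly unbounded) interval. The only genuine subtlety is the convention $\sup_{y \in \emptyset}\abs{y-x} = \infty$: in each direction I must separately guarantee that the relevant preimage is nonempty for all sufficiently small $\al$, since otherwise the quantity in \ref{lem:inv-conv2} stays $+\infty$.

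For \ref{lem:inv-conv1} $\Rightarrow$ \ref{lem:inv-conv2}, I would fix $x \in \R$ and $\epsilon > 0$ and apply the pointwise limit \ref{lem:inv-conv1} at the two points $x - \epsilon$ and $x + \epsilon$, so that $\ph_\al(x-\epsilon) \to x - \epsilon$ and $\ph_\al(x+\epsilon) \to x + \epsilon$; hence $\ph_\al(x-\epsilon) < x < \ph_\al(x+\epsilon)$ for all small $\al$. Continuity and the intermediate value theorem then produce some $y \in (x-\epsilon, x+\epsilon)$ with $\ph_\al(y) = x$, giving $\ph_\al^{-1}(x) \neq \emptyset$. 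Moreover, monotonicity confines every $y \in \ph_\al^{-1}(x)$ to $[x-\epsilon, x+\epsilon]$: if $y > x+\epsilon$ then $\ph_\al(y) \geq \ph_\al(x+\epsilon) > x$, and if $y < x - \epsilon$ then $\ph_\al(y) \leq \ph_\al(x-\epsilon) < x$, both contradicting $\ph_\al(y) = x$. Thus $\sup_{y \in \ph_\al^{-1}(x)}\abs{y-x} \leq \epsilon$ for all small $\al$, which is \ref{lem:inv-conv2}.

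For \ref{lem:inv-conv2} $\Rightarrow$ \ref{lem:inv-conv1}, I would fix $x \in \R$. Since the limit in \ref{lem:inv-conv2} is $0$, the preimage $\ph_\al^{-1}(x)$ is nonempty for all sufficiently small $\al$, so I may pick $y_\al \in \ph_\al^{-1}(x)$, i.e.\ $\ph_\al(y_\al) = x$, with $\abs{y_\al - x} \leq \sup_{y \in \ph_\al^{-1}(x)}\abs{y-x} \to 0$. Nonexpansiveness then gives
\begin{equation*}
\abs{\ph_\al(x) - x} = \abs{\ph_\al(x) - \ph_\al(y_\al)} \leq \abs{x - y_\al} \longrightarrow 0 ,
\end{equation*}
which is \ref{lem:inv-conv1}.

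The main obstacle, such as it is, lies entirely in the bookkeeping around the empty-preimage convention rather than in any analytic depth: in the forward direction I must manufacture a point of the preimage via the intermediate value theorem before I can bound it, and in the reverse direction I must read nonemptiness off the hypothesis itself. Once nonemptiness is secured, monotonicity (forward direction) and nonexpansiveness (reverse direction) close the argument immediately.
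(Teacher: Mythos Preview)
Your proof is correct and in both directions takes a more direct route than the paper's. For \ref{lem:inv-conv1}$\Rightarrow$\ref{lem:inv-conv2} the paper argues by contradiction twice: first it rules out $\ph_\al^{-1}(x)=\emptyset$ along a null sequence by evaluating $\ph_\al$ at $2x$ and invoking the intermediate value theorem indirectly, and then it rules out $q_\al \not\to x$ (where $\ph_\al^{-1}(x)=[p_\al,q_\al]$) via a $\limsup$ argument combined with nonexpansiveness. Your bracketing argument---evaluate the limit \ref{lem:inv-conv1} at the two anchor points $x\pm\epsilon$, apply the intermediate value theorem, then use monotonicity to trap the entire preimage in $[x-\epsilon,x+\epsilon]$---handles nonemptiness and the bound simultaneously and avoids all contradiction bookkeeping. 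For \ref{lem:inv-conv2}$\Rightarrow$\ref{lem:inv-conv1} the paper sets $z=\ph_\al(x)$, observes $x\in\ph_\al^{-1}(z)$, and bounds $\abs{z-x}$ by $\sup_{y\in\ph_\al^{-1}(z)}\abs{z-y}$; but since $z$ depends on $\al$, invoking hypothesis \ref{lem:inv-conv2} at this moving point is not entirely clean. Your argument sidesteps this by applying \ref{lem:inv-conv2} at the fixed point $x$, extracting $y_\al\in\ph_\al^{-1}(x)$ with $\abs{y_\al-x}\to 0$, and then using nonexpansiveness to pass from $\abs{y_\al-x}$ to $\abs{\ph_\al(x)-x}$. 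The trade-off is that your reverse direction genuinely uses the $1$-Lipschitz property, whereas the paper's (once patched) would rely only on the preimage structure; in practice your version is both shorter and more robust.
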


\begin{proof}
See Appendix~\ref{app:proof1}.
\end{proof}

Lemma \ref{lem:inv-conv} establishes that for a family of increasing and nonexpansive functions, the convergence of the functions to the identity function is equivalent to the convergence of their inverses to the identity function. This lemma holds practical relevance when applied to a family of proximity operators.

\begin{lemma} \label{lem:r-estimate}
Let $\qone \colon \R \to\R^+$ be convex and lower semi-continuous with $\qone(0) = 0$, and let $\ka, \al > 0$.  Suppose there exist $b,c > 0$ such that
\begin{equation} \label{eq:ass-restimate}
\forall x \in \R \colon \quad \sabs{x} \leq \min(\prox^{-1}_{\al \qone}(c \ka)) \Rightarrow \abs{\prox_{\al \qone}(x)} \leq \frac{\ka^2}{\ka^2+\al b} \sabs{x} \,.
\end{equation}
Then for all $y \in \R$  the following holds:
\begin{itemize}
\item If $\abs{y} \leq c \ka$, then  $\qone(y) \geq (b/2) \cdot \abs{ y / \ka}^2$.
\item If $\abs{y} > c \ka$, then
$\qone(y) \geq b c \abs{{y}/{\ka}}-{bc^2}/{2}$.
\end{itemize}
\end{lemma}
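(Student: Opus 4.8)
The plan is to translate the contraction hypothesis on $\prox_{\al\qone}$ into a pointwise lower bound on the subgradient of $\qone$, and then integrate. First I would record two standing facts. By Lemma~\ref{lem:prox}\ref{lem:prox1} the set-valued inverse satisfies $\prox_{\al\qone}^{-1} = \id + \al\,\partial\qone$, so for every $w$ the preimage is the closed interval $\prox_{\al\qone}^{-1}(w) = w + \al\,\partial\qone(w)$; moreover, by Lemma~\ref{lem:prox}\ref{lem:prox2} and \ref{lem:prox5}, $\prox_{\al\qone}$ is continuous and increasing. Since $\qone\colon\R\to\R^+$ is finite and convex it is locally Lipschitz, hence $\qone(y) = \int_0^y \qone'_+(t)\,dt$ with $\qone'_+$ the increasing right derivative. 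Abbreviate $\theta \coloneqq \ka^2/(\ka^2+\al b) \in (0,1)$ and record the identity $1/\theta - 1 = \al b/\ka^2$. I would treat $y \ge 0$ in detail; the case $y \le 0$ is symmetric.

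\emph{Main step (subgradient bound).} Set $x_0 \coloneqq \min(\prox_{\al\qone}^{-1}(c\ka))$. From $0 \in \partial(\al\qone)(0)$ one gets $\prox_{\al\qone}(0) = 0$, and since $x_0$ is an element of the closed interval $\prox_{\al\qone}^{-1}(c\ka)$ one has $\prox_{\al\qone}(x_0) = c\ka$. By the intermediate value theorem every $y \in [0,c\ka]$ is then attained as $y = \prox_{\al\qone}(x)$ for some $x \in [0,x_0]$. For such a pair the inverse relation gives $u \coloneqq (x-y)/\al \in \partial\qone(y)$, while the hypothesis (applicable since $\sabs{x} = x \le x_0$) yields $y = \prox_{\al\qone}(x) \le \theta x$. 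Rearranging, $x \ge y/\theta$, whence $u = (x-y)/\al \ge (y/\al)(1/\theta - 1) = (b/\ka^2)\,y$. As the right derivative is the largest subgradient, this forces $\qone'_+(y) \ge (b/\ka^2)\,y$ for all $y \in [0,c\ka]$. I expect this translation to be the crux: the one subtlety is the multivaluedness of $\prox_{\al\qone}^{-1}$, which is handled cleanly by passing from ``some subgradient'' to the right derivative rather than trying to pin down a specific preimage point.

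\emph{Integration.} For $0 \le y \le c\ka$, integrating gives $\qone(y) = \int_0^y \qone'_+(t)\,dt \ge \int_0^y (b/\ka^2)\,t\,dt = (b/2)\,\abs{y/\ka}^2$, the first claim. For $y > c\ka$, monotonicity of $\qone'_+$ together with the endpoint bound $\qone'_+(c\ka) \ge bc/\ka$ gives $\qone'_+(t) \ge bc/\ka$ for all $t \ge c\ka$; splitting the integral at $c\ka$ and inserting the quadratic bound already established at $c\ka$ (namely $\qone(c\ka) \ge bc^2/2$) yields
\[
\qone(y) \ge \frac{bc^2}{2} + \frac{bc}{\ka}\,(y - c\ka) = bc\,\abs{y/\ka} - \frac{bc^2}{2},
\]
the second claim. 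Finally, the case $y \le 0$ follows by running the identical argument on $[-x_0,0]$ (in the intended applications $\qone$ is even, so $\prox_{\al\qone}$ is odd and this case is literally a reflection of the one above), which completes the proof.
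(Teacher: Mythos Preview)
Your proof is correct and follows essentially the same strategy as the paper's: translate the contraction bound on $\prox_{\al\qone}$ into a subgradient lower bound via $\prox_{\al\qone}^{-1}=\id+\al\,\partial\qone$, then integrate. The only cosmetic differences are that you invoke the intermediate value theorem to produce a single preimage and pass to the right derivative, whereas the paper bounds every element of $\prox_{\al\qone}^{-1}(y)$ directly by monotonicity of the set-valued inverse; and you write the integration step explicitly, while the paper just says ``with $\qone(0)=0$ we get\ldots''. Both versions handwave the case $y<0$ in the same way; your parenthetical about evenness is extraneous to the lemma as stated but harmless.
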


\begin{proof}
See Appendix~\ref{app:proof2}.
\end{proof}

\begin{lemma}\label{prox-scaled}
Let $\qone \in \Ga_0(\R)$ and $\al > 0, \ga,\ka > 0$ such that $\ga \ka^2<1$.
Suppose there exist $t \in [0,1)$ such that
\begin{equation*}
\forall x \in \R \colon \quad \sabs{x} \leq \al / \ka \Rightarrow
\sabs{\prox_{\qone}(x)} \leq \frac{\ga\ka^2 t}{1-t(1-\ga \ka^2)} \sabs{x} \,.
\end{equation*}
Then for all $x \in \R$ the following hold
\begin{enumerate}
\item
$\sabs{x} \leq \ga \al \Rightarrow \sabs{\prox_{\ga \qone(\ka (\cdot))}(x)} \leq t \sabs{x}$.
\item
$\sabs{x} > \ga \al \Rightarrow \sabs{\prox_{\ga \qone(\ka (\cdot))}(x)-x} > (1-t)\ga \al$.
\end{enumerate}
\end{lemma}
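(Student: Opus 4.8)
The plan is to work entirely with the one-dimensional optimality (subdifferential) condition for $P \coloneqq \prox_{\ga\qone(\ka\,\cdot)}$ and to transfer the hypothesis on $\prox_\qone$ through a single change of variable. Writing $p = P(x)$, the identity $\prox = (\id+\partial)^{-1}$ from Lemma~\ref{lem:prox}\ref{lem:prox1} together with the scaling rule $\partial[\ga\qone(\ka\,\cdot)](p) = \ga\ka\,\partial\qone(\ka p)$ gives $s \coloneqq (x-p)/(\ga\ka) \in \partial\qone(\ka p)$. Setting $u \coloneqq \ka p + s$ we then have $u-\ka p = s \in \partial\qone(\ka p)$, hence $\prox_\qone(u) = \ka p$, and a short computation yields the key identity $u = (x-(1-\ga\ka^2)p)/(\ga\ka)$. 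Taking $x=0$ in the assumption forces $\prox_\qone(0)=0$, hence $0\in\partial\qone(0)$ and $P(0)=0$; thus $P$ is increasing and nonexpansive by Lemma~\ref{lem:prox}\ref{lem:prox5},\ref{lem:prox2}, so $x,p,u$ share the same sign and $\sabs{p}\le\sabs{x}$, whence $\sabs{u} = (\sabs{x}-(1-\ga\ka^2)\sabs{p})/(\ga\ka)$. I would treat $x\ge 0$ explicitly and note that $x\le 0$ is identical after passing to absolute values.

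For the first assertion, I would first observe that $\sabs{x}\le\ga\al$ implies $\sabs{u}\le\sabs{x}/(\ga\ka)\le\al/\ka$, so the hypothesis applies at $u$ and gives $\ka\sabs{p}=\sabs{\prox_\qone(u)}\le \beta\,\sabs{u}$, where $\beta \coloneqq \ga\ka^2 t/(1-t(1-\ga\ka^2))$. Substituting the identity for $\sabs{u}$ and solving the resulting linear inequality for $\sabs{p}$ gives $\sabs{p}\le \tfrac{\beta}{\ga\ka^2+\beta(1-\ga\ka^2)}\,\sabs{x}$. The whole point of the constant $\beta$ is that, with $A=\ga\ka^2$ and $B=1-\ga\ka^2$, the prefactor $\beta/(A+\beta B)$ collapses to exactly $t$; this is a one-line algebraic simplification. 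Hence $\sabs{P(x)}\le t\sabs{x}$.

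For the second assertion, let $\sabs{x}>\ga\al$ and split according to the size of $u$. If $\sabs{u}\le\al/\ka$, the computation from the first assertion applies verbatim to give $\sabs{p}\le t\sabs{x}$, whence $\sabs{P(x)-x}=\sabs{x}-\sabs{p}\ge(1-t)\sabs{x}>(1-t)\ga\al$, the strictness being immediate from $\sabs{x}>\ga\al$. If instead $\sabs{u}>\al/\ka$, the hypothesis no longer applies at $u$; here I would use nonexpansiveness of $\prox_\qone$ from the boundary point $\al/\ka$, which yields $\ka\sabs{p}=\sabs{\prox_\qone(u)}\le \sabs{\prox_\qone(\al/\ka)}+(\sabs{u}-\al/\ka)$, so that $\sabs{P(x)-x}=\ga\ka\sabs{s}=\ga\ka(\sabs{u}-\ka\sabs{p})\ge \ga\ka(\al/\ka-\sabs{\prox_\qone(\al/\ka)})$. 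Applying the hypothesis at $\al/\ka$ to bound $\sabs{\prox_\qone(\al/\ka)}\le \beta\,\al/\ka$ gives $\sabs{P(x)-x}\ge (1-\beta)\ga\al$, and I would finish by noting that $\ga\ka^2<1$ forces $\beta<t$, so $(1-\beta)\ga\al>(1-t)\ga\al$.

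I expect the delicate point to be the strict inequality in the second assertion in the regime $\sabs{u}>\al/\ka$. A non-strict bound $\sabs{P(x)-x}\ge(1-t)\ga\al$ follows painlessly from monotonicity of $x\mapsto x-P(x)$ combined with the first assertion evaluated at $\ga\al$, but recovering the \emph{strict} inequality requires the sharper estimate $\beta<t$, which is precisely where the standing assumption $\ga\ka^2<1$ is consumed. Verifying $\beta<t$ reduces to the elementary inequality $\ga\ka^2(1-t)<1-t$, and the collapse $\beta/(\ga\ka^2+\beta(1-\ga\ka^2))=t$ used in the first assertion is the only other computation that must be checked carefully.
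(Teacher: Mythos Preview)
Your argument is correct and, for part~(a), essentially identical to the paper's: your auxiliary point $u$ coincides with the paper's $b$, the relation $\prox_\qone(u)=\ka p$ is the paper's $\prox_\qone(b)=a$, and the subsequent linear inequality collapsing to $\sabs{p}\le t\sabs{x}$ is the same computation. For part~(b), your case split on $\sabs{u}\lessgtr\al/\ka$ together with the observation $\beta<t$ (which is precisely where $\ga\ka^2<1$ is used) to secure the \emph{strict} inequality supplies a complete justification that the paper's own proof in the appendix actually omits.
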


\begin{proof}
See Appendix~\ref{app:proof3}.
\end{proof}

\subsection{Diagonal frame decomposition}

 Let $\Ao \colon \X \to \Y$ be a bounded linear operator and $\La$ an at most countable index set.  The filter techniques analysed in this paper use a diagonal frame decomposition (DFD) $(\uo,\vo,\kao) = (u_\la,v_\la,\ka_\la)_{\la \in \La}$ as introduced \cite{ebner2023regularization}.

\begin{definition}[Diagonal drame decomposition, DFD] \label{def:dfd}
The triple $(\uo, \vo, \kao) = (u_\la, v_\la, \ka_\la)_{\la \in \La}$ is called  diagonal frame decomposition (DFD) for $\Ao$ if the following holds:
\begin{enumerate}[itemindent =2em, leftmargin =2.5em,  label=(DFD\arabic*)]
\item\label{DFD1}  $(u_\la)_{\la \in \La}$ is a frame for $(\ker{\Ao})^{\perp} \subseteq \X$.
\item\label{DFD2}  $(v_\la)_{\la \in \La}$ is a frame for $\overline{\ran\Ao}\subseteq \Y$.
\item\label{DFD3}  $(\kappa_\la)_{\la \in \La}\in (0, \infty)^\La$ satisfies the quasi-singular relations $\forall \la \in \La \colon  \Ao^* v_\la = \ka_\la u_\la $.\end{enumerate}
We call $(\ka_\la)_{\la \in \La}$ the family of quasi-singular values and $(u_\la)_{\la \in \La}$, $(v_\la)_{\la \in \La}$  the   corresponding quasi-singular systems.
\end{definition}

The notion of a DFD reduces to the singular value decomposition (SVD) when $(u_\la)_{\la \in \La}$ and $(v_\la)_{\la \in \La}$ are orthonormal bases. In particular, DFDs exist in quite general settings. They can also exist where there is no SVD, for example when the spectrum of $\Ao$ is continuous. However, the main advantage of DFDs over SVDs is that the quasi-singular systems can provide better approximation properties than the singular systems from the SVD. An example of this is when $(u_\la)_{\la \in \La}$ can be taken as the wavelet basis, as in the case of the Radon transform \cite{donoho1995nonlinear,ebner2023regularization,hubmer2022regularization}.

Let   $(\uo, \vo, \kao) $  be a DFD for $\Ao$ and $\bar \uo$ be a dual frame of $\uo$, defined  by  $x = \To_{\bar \uo} \To^*_{\uo} x$ for all $x \in \X$. 
Further,  let  
\begin{align*}
\To_{\bar \uo} \colon \ell^2(\La) \rightarrow \X \colon (c_\la)_{\la} \mapsto \sum_{\la \in \La} c_\la \bar u_\la \\
\To^*_{\vo} \colon \Y \rightarrow \ell^2(\La) \colon y \mapsto (\inner{y}{v_\la})_{\la \in \La},
\end{align*}
denote the synthesis and analysis operator of  $\bar \uo$ and $\vo$, respectively.  
Using the DFD, the Moore-Penrose inverse of $\Ao$ can be written as
\begin{equation} \label{eq:dfd-inv}
\forall y  \in \dom(\Ao^\mpi) \colon \quad
\Ao^\mpi (y) = \sum_{\la \in \La} \frac{1}{\ka_\la} \inner{y}{v_\la} \bar{u}_\la =  \To_{\bar \uo} \circ  \Mo^\mpi_\kao \circ \To_{\vo}^* (y)   \,,
\end{equation}
where $\Mo_{\kao}$ is the component-wise multiplication operator $\Mo_{\kao}((x_\la)_{\la \in \La}) = (\ka_\la x_\la)_{\la \in \La}$ and $\Mo^\mpi_\kao$ its Moore-Penrose inverse. Since the frame operators are continuous and invertible,  diagonalizing $\Ao$ with a DFD basically reduces the inverse problem \eqref{eq:ip} to an inverse problem with a diagonal forward operator  from $\ell^2(\La)$ to $\ell^2 (\La)$.

Due to  the ill-posedness of inverting $\Ao$, the values of $(\ka_\la)_{\la \in \La}$ accumulate at zero \cite[Thm. 7]{ebner2023regularization}, which means that $(1/\ka_\la)_\la$ is unbounded.  As a result, small errors in the data can be significantly amplified using \eqref{eq:dfd-inv}.  To reduce error amplification, we use regularizing filters aiming to damp noisy coefficients.
Unlike the widely studied linear regularizing filters  \cite{Ca02,ebner2023regularization,Gr87,hubmer2022regularization}, we investigate non-linear filters that can depend on both the operator and the data in a non-linear matter.

\section{Non-linear diagonal frame filtering}
 \label{sec:non-linear}

Throughout this paper $\X$, $\Y$ denote  Hilbert spaces and $\Ao \colon \X \rightarrow \Y$ is a bounded linear operator with an unbounded Moore-Penrose inverse  $\Ao^\mpi$.
We assume that $\Ao$ has a diagonal frame decomposition (DFD) $(\uo,\vo,\kao) = (u_\la,v_\la,\ka_\la)_{\la \in \La}$  with $\sup_{\la \in \La} \ka_\la < \infty$.

\subsection{Non-linear filtered DFD}

The following two definitions are central for this work.

\begin{definition}[Non-linear regularizing filter] \label{def:filter}
We call a family $(\ph_\al)_{\al > 0}$ of functions $\ph_{\al} \colon \R_+ \times \R \to\R$ a non-linear regularizing filter if for all $\al, \ka > 0$, the following holds
\begin{enumerate}[itemindent =2em, leftmargin =1em,  label=(F\arabic*)]
\item  \label{def:filter1} $\ph_\al(\ka, \cdot)$ is monotonically increasing.
\item  \label{def:filter2} $\ph_\al(\ka, \cdot)$ is nonexpansive.
\item  \label{def:filter3} $\ph_{\al}(\ka, 0)=0$.
\item  \label{def:filter4} $ \forall c \in \R \colon \lim_{\al \to 0} \ph_{\al}(\ka,c) = c$.
\end{enumerate}
\end{definition}

The properties required in the definition of non-linear regularizing filters are quite natural. Recall that the field of imaging often exploits the fact that natural images or signals, other than noise, are sparse in certain frames, which means that the majority of the coefficients are  zero.  Assuming the original coefficients to be small and the noisy coefficient to be zero, it makes sense to leave them at zero after filtering. Furthermore, without specific knowledge of the noise structure, it seems reasonable to preserve the order after coefficient filtering which is the  monotonicity. In addition, the distance between denoised   coefficients should not exceed the distance between the noisy coefficients, which is nonexpansive. The last property is a technical one to show the convergence of the  non-linear filtered DFD defined next to an inverse of $\Ao$.

\begin{definition}[Non-linear filtered DFD] \label{def:nl-dfd}
Let $(\ph_\al)_{\al > 0}$ be a non-linear regularizing filter.  The non-linear filtered DFD $(\Bo_\al)_{\al >0}$ with $ \Bo_\al \colon \dom(\Bo_\al) \to \X$ is defined by
\begin{align} \label{eq:non-linear-inv-dom}
	\dom(\Bo_\al)
	&\coloneqq \{y \in \Y \mid \Phi_{\al, \kao}  \circ \To_{\vo}^*  (y) \in \dom (\Mo^\mpi_\kao)\}  \,,
	\\ \label{eq:non-linear-inv}
	\Bo_\al (y)
	&\coloneqq \sum_{\la \in \La}
	\frac{1}{\ka_\la}\ph_\al(\ka_\la,\sinner{y}{v_\la}) \bar u_\la =
	\To_{\bar \uo} \circ  \Mo^\mpi_\kao \circ \Phi_{\al, \kao}
\circ \To_{\vo}^*  (y)  \,,
\end{align}
where $\Phi_{\al,\kao} \colon \ell^2(\La) \rightarrow \ell^2(\La) \colon (c_\la)_{\la \in  \La} \mapsto (\ph_{\al}(\ka_\la,c_\la))_{\la \in \La}$.
\end{definition}

According to \ref{def:filter2},  $\Phi_{\al,\kao}$ is well-defined and nonexpansive.

\begin{remark}[Linear case]
If $\ph_\al(\ka,c) = \ka f_\al(\ka) c$  for a linear regularizing filter $(f_\al)_{\al > 0}$ (as defined in \cite[Definition 3.1]{ebner2023regularization}), then $ \ph_\al$  is linear in the second component  and \eqref{eq:non-linear-inv}  reduces to the linear filtered DFD.
If $f_\al \geq 0$, $\sup\{\abs{\ka f_\al(\ka)} \mid \al, \ka> 0 \} \leq 1$ and $\lim_{\al \to 0} f_\al(\ka) = 1/\ka$, then $(\ph_{\al})_{\al >0}$ satisfies \ref{def:filter1}-\ref{def:filter4}.  Linear filtered DFDs has been analyzed and shown to be a regularization method  in \cite{ebner2023regularization}.   Here we extend the analysis to filters that are non-linear in the second component.
\end{remark}

The goal of this paper is to show stability and convergence  of  $(\Bo_\al)_{\al>0}$.
Since $\To_{\bar \uo}$ and $\To_{\vo}^*$ are continuous,  according to \eqref{eq:dfd-inv}, \eqref{eq:non-linear-inv}  it is sufficient to analyze stability and convergence of $(\Mo^\mpi_\kao \circ \Phi_{\al, \kao})_{\al>0}$.

\subsection{Filters as proximity operators}

In the next lemma we demonstrate  that non-linear regularizing filters are  proximity operators of proper, convex, and lower semi-continuous functionals.

\begin{lemma}[Filters are proximity operators]\label{lem:main}
Let $(\ph_\al)_{\al>0}$ satisfy \ref{def:filter1}-\ref{def:filter3}. Then the following hold
\begin{enumerate}
\item\label{lem:main1} There  exist $\qone_{\al,\la} \in \Gamma_0(\R)$ with $\qone_{\al,\la} \geq \qone_{\al,\la}(0)=0$ such that $\ph_{\al}(\ka_\la,\cdot) = \prox_{\qone_{\al,\la}}$.
\item\label{lem:main2}
$\Rc_\al \coloneqq \bigoplus_{\la \in \La} \qone_{\al,\la}(\ka_\la(\cdot))$ is non-negative and contained in $\Ga_0(\ell^2(\La))$ for all $\al > 0$.

\item\label{lem:main3}
$\forall (z_\la)_{\la \in \La} \in \ell^2(\La) \colon  \prox_{\Rc_\al}((z_\la)_{\la \in \La}) = (\prox_{\qone_{\al,\la}(\ka_\la (\cdot))}(z_\la))_{\la \in \La}$.

\item\label{lem:main4}  For all $z \in \dom(\Mo_{\kao}^\mpi \circ \Phi_{\al,\kao})$ and any fixed $\ga > 0$ we have
\begin{align}  \label{eq:main2}
\Mo^\mpi_\kao \circ \Phi_{\al, \kao} (z)  &= \argmin_{x  \in \ell^2(\La)}  \norm{\Mo_\kao x -z }^2/2 + \Rc_\al( x )  \,,
\\ \label{eq:main3}
\Mo^\mpi_\kao \circ \Phi_{\al, \kao} (z)   &= \fix \bigl( \prox_{\ga \Rc_\al} \circ (\id - \ga \nabla \norm{\Mo_{\kao}(\cdot) - z}^2/2) \bigr) \,.
\end{align}
\end{enumerate}
\end{lemma}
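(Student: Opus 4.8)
The plan is to dispatch the four items in order, using Lemma~\ref{lem:prox} as the main engine and reserving the genuine work for the separable minimization in item~\ref{lem:main4}. For \ref{lem:main1} I first invoke Lemma~\ref{lem:prox}\ref{lem:prox5}: by \ref{def:filter1} and \ref{def:filter2} each $\ph_\al(\ka_\la,\cdot)$ is increasing and nonexpansive, so there is some $\tilde\qone_{\al,\la}\in\Ga_0(\R)$ with $\ph_\al(\ka_\la,\cdot)=\prox_{\tilde\qone_{\al,\la}}$. Since $\prox$ is unchanged by adding a constant, I set $\qone_{\al,\la}\coloneqq\tilde\qone_{\al,\la}-\tilde\qone_{\al,\la}(0)$, which is well defined because $\prox_{\tilde\qone_{\al,\la}}(0)=0$ forces $0\in\dom(\tilde\qone_{\al,\la})$; this normalizes $\qone_{\al,\la}(0)=0$ while preserving the proximity operator. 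To obtain $\qone_{\al,\la}\geq0$ I use \ref{def:filter3}: $\prox_{\qone_{\al,\la}}(0)=\ph_\al(\ka_\la,0)=0$, so by Lemma~\ref{lem:prox}\ref{lem:prox1} we get $0\in\partial\qone_{\al,\la}(0)$, and the subgradient inequality then yields $\qone_{\al,\la}(y)\geq\qone_{\al,\la}(0)=0$ for all $y$.

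For \ref{lem:main2} and \ref{lem:main3} I set $r_\la\coloneqq\qone_{\al,\la}(\ka_\la(\cdot))$. As $\ka_\la>0$, the map $x\mapsto\ka_\la x$ is an affine bijection of $\R$, so each $r_\la$ inherits membership in $\Ga_0(\R)$ from $\qone_{\al,\la}$, and item~\ref{lem:main1} gives $r_\la\geq r_\la(0)=0$. The hypotheses of Lemma~\ref{lem:prox}\ref{lem:prox6} are thus met, which yields at once that $\Rc_\al=\bigoplus_\la r_\la$ is non-negative and lies in $\Ga_0(\ell^2(\La))$, proving \ref{lem:main2}, and that $\prox_{\Rc_\al}$ acts coordinatewise as asserted in \ref{lem:main3}.

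The core is \ref{lem:main4}. For \eqref{eq:main2} I exploit that the objective is fully separable: minimizing $\tfrac12\norm{\Mo_\kao x-z}^2+\Rc_\al(x)$ over $x\in\ell^2(\La)$ amounts to minimizing $\tfrac12(\ka_\la x_\la-z_\la)^2+\qone_{\al,\la}(\ka_\la x_\la)$ in each coordinate. Substituting $w_\la=\ka_\la x_\la$ turns the $\la$-th problem into the defining minimization of $\prox_{\qone_{\al,\la}}(z_\la)=\ph_\al(\ka_\la,z_\la)$, so the coordinatewise minimizer is $x_\la^\star=\ka_\la^{-1}\ph_\al(\ka_\la,z_\la)=(\Mo^\mpi_\kao\circ\Phi_{\al,\kao}(z))_\la$. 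Two summability checks then close the argument: comparing with $w_\la=0$ bounds each coordinate minimum $m_\la$ by $\tfrac12 z_\la^2$, so (using $z\in\ell^2(\La)$) the separable infimum $\sum_\la m_\la$ is finite; and the hypothesis $z\in\dom(\Mo^\mpi_\kao\circ\Phi_{\al,\kao})$ is precisely the statement that $x^\star\in\ell^2(\La)$. Since every competitor has objective value at least $\sum_\la m_\la$ while $x^\star$ attains it, and the coordinate quadratics are strictly convex (equivalently $\Mo_\kao$ is injective), $x^\star$ is the unique minimizer. Finally \eqref{eq:main3} follows by applying Lemma~\ref{lem:prox}\ref{lem:prox3} with $\Dc(x)=\tfrac12\norm{\Mo_\kao x-z}^2$, which is convex and Fr\'echet differentiable on $\ell^2(\La)$, and $\Rc=\Rc_\al\in\Ga_0(\ell^2(\La))$ from \ref{lem:main2}, then substituting $\argmin(\Dc+\Rc_\al)=\Mo^\mpi_\kao\circ\Phi_{\al,\kao}(z)$ from \eqref{eq:main2}.

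I expect the main obstacle to be the summability bookkeeping in \eqref{eq:main2}: one must verify that the pointwise minimizers assemble into an $\ell^2(\La)$ sequence, which is exactly encoded by the domain condition, and that the separable infimum is simultaneously finite and attained, so that coordinatewise optimality can be upgraded to global optimality over $\ell^2(\La)$.
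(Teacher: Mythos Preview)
Your proposal is correct and follows essentially the same route as the paper: Lemma~\ref{lem:prox}\ref{lem:prox5} for \ref{lem:main1}, Lemma~\ref{lem:prox}\ref{lem:prox6} for \ref{lem:main2}--\ref{lem:main3}, coordinatewise separability plus the substitution $w_\la=\ka_\la x_\la$ for \eqref{eq:main2}, and Lemma~\ref{lem:prox}\ref{lem:prox3} for \eqref{eq:main3}. Your treatment is in fact slightly more careful than the paper's, which simply writes the chain of equalities from coordinatewise $\argmin$ to the $\argmin$ over $\ell^2(\La)$ without the summability bookkeeping you flag; your use of the domain condition to ensure $x^\star\in\ell^2(\La)$ and the comparison with $w_\la=0$ to bound the separable infimum make that passage rigorous.
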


\begin{proof}
Since $\ph_{\al}(\ka_\la,\cdot) \colon \R \to\R$ is increasing and nonexpansive, according to  a Lemma~\ref{lem:prox}\ref{lem:prox5},  $\ph_{\al}(\ka_\la,\cdot) = \prox_{\qone_{\al,\la}}$ with $\qone_{\al,\la} \in \Ga_0( \R)$. By \ref{def:filter3},  $\prox_{\qone_{\al,\la}}(0)=0$ which  means $0 \in  \argmin \qone_{\al,\la}$ and thus $\qone_{\al,\la}(0) \geq \qone_{\al,\la}$.   In particular, we can choose   $\qone_{\al,\la}$ with $\qone_{\al,\la}(0)=0$ which yields  \ref{lem:main1}.   Clearly $\Rc_\al \coloneqq \bigoplus_{\la \in \La} \qone_{\al,\la}(\ka_\la(\cdot))$ is non-negative. By Lemma~\ref{lem:prox}\ref{lem:prox6},  $ \Rc_\al \in \Ga_0(\ell^2(\La))$ and $(\prox_{\qone_{\al,\la}(\ka_\la (\cdot))}(z_\la))_{\la \in \La} = \prox_{\Rc_\al}((z_\la)_{\la \in \La})$ which shows \ref{lem:main2}, \ref{lem:main3}.
One easily verifies $ \ka_\la^{-1} \, \ph_\al(\ka_\la,z_\la) = \argmin_x   \sabs{\ka_\la x -z_\la}^2/2 + \qone_{\al,\la}(\ka_\la x)  $.
Thus, for $\Mo_\kao^\mpi \circ \Phi_{\al, \kao} (z) \in \ell^2(\La)$,
\begin{align*}
\Mo_\kao^\mpi \circ \Phi_{\al, \kao} (z)
&= \bigl( \ka_\la^{-1} \,\ph_\al(\ka_\la,z_\la)\bigr)_{\la \in \La}\\
&= \bigl(\argmin_{x_\la \in \R} ( \abs{\ka_\la x_\la -z_\la }^2/2 + \qone_{\al,\la}(\ka_\la x_\la) ) \bigr)_{\la \in \La} \\
&=  \argmin_{x  \in \ell^2(\La)}  \Bigl( \sum_{\la \in  \La}  \abs{\ka_\la x_\la -z_\la }^2/2 + \qone_{\al,\la}(\ka_\la x_\la)   \Bigr) \\
&=\argmin_{x  \in \ell^2(\La)} \norm{\Mo_\kao x -z }^2/2 +\Rc_\al(x)  \,,
\end{align*}
which is \eqref{eq:main2}. Since $\sup_{\la \in \La} \ka_\la < \infty$, the operator $\Mo_{\kao}$ is bounded and thus  $ \norm{\Mo_\kao (\cdot) -z }^2/2$ is convex and Fréchet differentiable.  Together with Lemma~\ref{lem:prox}\ref{lem:prox3} we get  \eqref{eq:main3}.
\end{proof}

Lemma~\ref{lem:main} plays a central role in the upcoming convergence analysis and transforms the non-linear filtered DFD into both an equivalent optimization problem and an equivalent fixed point equation. The optimization problem consists of the least squares data fitting and the regularization function $\Rc_\al$. Unless otherwise specified, we use $\qone_{\al,\la}$ and $\Rc_\al$ to denote the functionals induced by the nonlinear regularizer filter $(\ph_\al)_{\al>0}$, according to Lemma~\ref{lem:main}.  In general, both $\Rc_\al$ and all $s_{\al,\la}$ depend on the quasi-singular values $\ka_\la$ and thus on the underlying operator. This is different from the usual variational regularization, where the regularization term is usually designed to be independent of the forward operator. With this in mind, we introduce the following notation.

\begin{definition}[$\kao$-regularizer]
Suppose $(\ph_\al)_{\al>0}$ is a non-linear regularizing filter, and let $\qone_{\al,\la} \in \Gamma_0(\R)$ with $\qone_{\al,\la} \geq \qone_{\al,\la}(0)=0$ and $\ph_{\al}(\ka_\la,\cdot) = \prox_{\qone_{\al,\la}}$ be as in Lemma~\ref{lem:main}. We call the family $(\Rc_\al)_{\al >0}$ defined by 
\begin{equation*} 
\forall (x_\la)_\la \in \ell^2(\La) \colon \quad 
 \Rc_\al((x_\la)_\la) 
\coloneqq \bigoplus_{\la \in \La} \qone_{\al,\la}(\ka_\la(x_\la))
\end{equation*}
the $\kao$-regularizer defined by the filter $(\ph_\al)_{\al>0}$ and the weight vector $\kao$.
Further, we call a $\kao$-regularizer stationary if $\Rc_\al = \al \Rc_1$ and non-stationary otherwise.  
\end{definition}

According to Lemma~\ref{lem:prox}\ref{lem:prox4},  $\Mo_\kao^\mpi \circ \Phi_{\al, \kao} (z)$ for any $z$ is contained in $\dom( \Rc_\al)$. While it is known that $x = (x_\la)_{\la} \in \dom(\Rc_\al)$ if and only if $(s_{\al,\la}(\ka_\la x_\la))_{\la} \in \ell^1(\La) $, this criterion is difficult to verify knowing only the  proximity operators. The following lemma provides a more practical condition for verifying that an element belongs to the domain of $\Rc_\al$.

\begin{lemma}\label{lem:domR}
Let $(\ph_{\al})_{\al> 0}$ be a non-linear regularizing filter and $\al>0$ be fixed.
Then, we have $\ran( \Mo^+_{\kao} \circ \Phi_{\al, \kao}) \subseteq \dom(\Rc_\al)$.
In particular, $\ran( \Mo^+_{\kao} \circ \Phi_{\al, \kao}) \subseteq \dom(\partial \Rc_\al)$.
\end{lemma}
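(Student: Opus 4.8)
The plan is to prove the two inclusions in turn, with the first being the substantive claim and the second an immediate consequence. First I would unwind the definitions. Fix $\al > 0$ and take any element $w \in \ran(\Mo^\mpi_\kao \circ \Phi_{\al,\kao})$, so that $w = \Mo^\mpi_\kao \circ \Phi_{\al,\kao}(z)$ for some $z$ in the domain. By Lemma~\ref{lem:main}\ref{lem:main1} we have $\ph_\al(\ka_\la,\cdot) = \prox_{\qone_{\al,\la}}$, and componentwise $w_\la = \ka_\la^{-1}\ph_\al(\ka_\la,z_\la) = \ka_\la^{-1}\prox_{\qone_{\al,\la}}(z_\la)$. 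Since membership in $\dom(\Rc_\al) = \dom(\bigoplus_\la \qone_{\al,\la}(\ka_\la(\cdot)))$ means precisely that $(\qone_{\al,\la}(\ka_\la w_\la))_\la \in \ell^1(\La)$, the goal reduces to producing a summable bound on $\qone_{\al,\la}(\ka_\la w_\la) = \qone_{\al,\la}(\prox_{\qone_{\al,\la}}(z_\la))$.

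The key step is to control $\qone_{\al,\la}(\prox_{\qone_{\al,\la}}(z_\la))$ in terms of quantities already known to be summable. The natural tool is the Moreau envelope / proximal inequality: writing $p = \prox_{\qone_{\al,\la}}(z_\la)$, the defining minimization gives $\tfrac12(z_\la - p)^2 + \qone_{\al,\la}(p) \leq \tfrac12 z_\la^2 + \qone_{\al,\la}(0) = \tfrac12 z_\la^2$, using $\qone_{\al,\la}(0) = 0$. Hence $\qone_{\al,\la}(p) \leq \tfrac12 z_\la^2 - \tfrac12(z_\la - p)^2 \leq \tfrac12 z_\la^2$. Now $w_\la = \ka_\la^{-1} p$, so $\qone_{\al,\la}(\ka_\la w_\la) = \qone_{\al,\la}(p) \leq \tfrac12 z_\la^2$. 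Summing over $\la$ gives $\sum_\la \qone_{\al,\la}(\ka_\la w_\la) \leq \tfrac12\sum_\la z_\la^2 = \tfrac12\norm{z}^2 < \infty$ because $z \in \ell^2(\La)$. This shows $(\qone_{\al,\la}(\ka_\la w_\la))_\la \in \ell^1(\La)$, i.e. $\Rc_\al(w) < \infty$, which is exactly $w \in \dom(\Rc_\al)$.

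For the second inclusion, I would invoke the standard fact that for $\Rc_\al \in \Ga_0(\ell^2(\La))$ every point in the range of its proximity operator lies in $\dom(\partial \Rc_\al)$: indeed, by Lemma~\ref{lem:main}\ref{lem:main3} together with \ref{lem:main4}, the element $w = \Mo^\mpi_\kao \circ \Phi_{\al,\kao}(z)$ is the minimizer characterized in \eqref{eq:main2}, equivalently $w = \prox_{\Rc_\al}(z + (\id - \Mo_\kao^*\Mo_\kao)\,\text{[correction terms]})$ via the fixed-point form \eqref{eq:main3}; more directly, optimality of $w$ for $\tfrac12\norm{\Mo_\kao x - z}^2 + \Rc_\al(x)$ yields $0 \in \Mo_\kao^*(\Mo_\kao w - z) + \partial\Rc_\al(w)$, so $\partial\Rc_\al(w) \ni -\Mo_\kao^*(\Mo_\kao w - z) \neq \emptyset$, giving $w \in \dom(\partial\Rc_\al)$. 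Since $\dom(\partial\Rc_\al) \subseteq \dom(\Rc_\al)$ always, the ``in particular'' is actually the sharper statement and the first inclusion follows from it; I would present the Moreau-inequality argument as the self-contained route and then note the subdifferential refinement.

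The main obstacle I anticipate is a subtlety of convention rather than a deep difficulty: one must be careful that the separable functional $\Rc_\al$ and its domain are correctly identified through the weight $\ka_\la$, so that $\qone_{\al,\la}(\ka_\la w_\la)$ with $w_\la = \ka_\la^{-1}p$ collapses cleanly to $\qone_{\al,\la}(p)$ without stray factors. A second point requiring care is that the bound $\qone_{\al,\la}(p) \leq \tfrac12 z_\la^2$ holds uniformly in $\la$ with a constant independent of $\al$ and $\ka_\la$, which is what makes the sum converge; this relies only on $\qone_{\al,\la}(0) = 0$ from Lemma~\ref{lem:main}\ref{lem:main1} and the proximal minimization inequality, both already available, so no new estimates are needed.
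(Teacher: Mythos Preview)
Your proposal is correct. The paper's own proof is shorter and proceeds exactly along the lines of your second argument: it invokes the variational characterization \eqref{eq:main2} to say that $w$ is a minimizer of $\tfrac12\norm{\Mo_\kao x - z}^2 + \Rc_\al(x)$, observes that a minimizer must lie in $\dom(\Rc_\al)$ (the infimum is finite since the value at $0$ is $\tfrac12\norm{z}^2$), and then applies Fermat's rule to get $w \in \dom(\partial\Rc_\al)$. Your componentwise Moreau-inequality argument for the first inclusion is a genuinely different route: it bypasses \eqref{eq:main2} entirely and yields the explicit quantitative bound $\Rc_\al(w) \leq \tfrac12\norm{z}^2$, which the paper does not state. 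This is a nice bonus, though for the purposes of the lemma the paper's one-line appeal to the minimizer property is more economical. Since you yourself note that $\dom(\partial\Rc_\al) \subseteq \dom(\Rc_\al)$ makes the subdifferential statement the sharper one, the cleanest write-up would be to lead with the Fermat argument (as the paper does) and mention the explicit bound only if it is needed later.
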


\begin{proof}
Let $x \in \ran( \Mo^+_{\kao} \circ \Phi_{\al, \kao}) \subseteq \ell^2(\La)$ and choose $z \in \ell^2(\La)$ such that $\Mo^+_{\kao} \circ \Phi_{\al, \kao}(z)=x$.
Then, by~\eqref{eq:main2}, we have $x = \argmin_{\tilde x  \in \ell^2(\La)}  \norm{\Mo_\kao \tilde x -z }^2/2 + \Rc_\al(\tilde  x )$, 
which implies $x \in \dom(\Rc_\al)$ and consequentially by Fermat's theorem we conclude  $x \in \dom(\partial\Rc_\al)$.
\end{proof}

\section{Convergence Analysis} \label{sec:conv}

In this section we investigate stability and convergence of $(\Mo^\mpi_{\kao} \circ \Phi_{\al, \kao})_{\al>0}$.
By the continuity of the frame operators $\To_{\bar \uo}$ and $\To^*_{\vo}$, this implies stability and convergence of  
the non-linear filtered DFD 
\begin{align}
	\Bo_\al (y)
	&\coloneqq \sum_{\la \in \La}
	\frac{1}{\ka_\la}\ph_\al(\ka_\la,\sinner{y}{v_\la}) \bar u_\la =
	\To_{\bar \uo} \circ  \Mo^\mpi_\kao \circ \Phi_{\al, \kao}
\circ \To_{\vo}^*  (y)  \,,
\end{align}
rigorously defined in Definition~\ref{def:nl-dfd}. To achieve stability and convergence, we impose additional assumptions on the nonlinear regularization filter $(\ph_{\al})_{\al > 0}$. First, we simplify the filtered DFD to the familiar concept of variational regularization, which helps to understand the analytical strategies employed. In a second step, we relax the extra assumptions and analyze $(\Mo^\mpi_{\kao} \circ \Phi_{\al, \kao})_{\al>0}$ directly.

\subsection{Stationary case:  Application of variational regularization}
\label{ssec:case1}

Variational regularization uses minimizers of the generalized Tikhonov functional $\Tc_{\al,y}(x) = \norm{\Ao x -y}^2/2 + \al \Rc(x)$, where $\Rc$ is a regularizing functional and $\al > 0$ the regularization parameter.
It is  well-investigated by numerous works, such as \cite{En96,mazzieri2012existence,scherzer2009variational}.
In this paper we use the following convergence result.

\begin{lemma}[Variational regularization] \label{lem:variational}
Let $\Rc \in \Ga_0(\X)$ be norm-coercive.
Suppose  $\al_k, \al> 0$ and $y,y^k \in \Y$ with $y^k \to y$.
\begin{itemize}
\item Existence: $\Tc_{\al,y}$ has at least one minimizer.
\item Stability: For  $x^k \in \argmin \Tc_{\al,y^k}$, there exists a subsequence $(x^{k(\ell)})_{\ell \in \N}$ and some $x_\al \in \argmin \Tc_{\al,y}$ with $x^{k(\ell)} \rightharpoonup x_\al$ and $\Rc(x^{k(\ell)}) \rightarrow \Rc(x_\al)$ as $\ell  \to \infty$.
If the minimizer of $\Tc_{\al,y}$ is unique, then $x^k \rightharpoonup x_\al$ as $k \to \infty$.

\item Weak Convergence: Assume $y \in \ran(\Ao)$ and $\Ao x =y$ has a solution in $\dom(\Rc)$. Let  $\snorm{y^k-y} \leq \delta_k$  and  $\delta_k, \al_k, \delta_k^2/\al_k \to 0$. For $x^k \in \argmin \Tc_{\al_k,y^k}$ there exists a subsequence $(x^{k(\ell)})_{\ell \in \N}$ and a solution $x^\mpi$ of $\Ao x = y$ with $x^{k(\ell)} \rightharpoonup x^\mpi$ and $\Rc(x^{k(\ell)}) \rightarrow \Rc(x^\mpi)$.
If the solution is unique, then $x^k \rightharpoonup x^\mpi$.
\end{itemize}
\end{lemma}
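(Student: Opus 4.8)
The plan is to prove all three parts by the direct method of the calculus of variations, relying on three standard facts: (i) since $\Rc \in \Ga_0(\X)$ is convex and lower semi-continuous it is weakly sequentially lower semi-continuous, and the same holds for the continuous convex data term $x \mapsto \norm{\Ao x - y}^2/2$; (ii) norm-coercivity of $\Rc$ forces every sequence along which $\al\Rc$ stays bounded to be bounded in $\X$; and (iii) bounded sequences in the Hilbert space $\X$ admit weakly convergent subsequences. For \emph{Existence} I would take a minimizing sequence $(x^n)$ for $\Tc_{\al,y}$ (the infimum is finite because $\dom(\Rc)\neq\emptyset$ supplies an upper bound, while $\Rc$ admits an affine minorant and the data term is non-negative), observe that $\al\Rc(x^n) \le \Tc_{\al,y}(x^n)$ is bounded so $(x^n)$ is bounded by (ii), pass to a weak limit $\hat x$, and conclude $\Tc_{\al,y}(\hat x) \le \liminf_n \Tc_{\al,y}(x^n) = \inf \Tc_{\al,y}$ from (i).

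For \emph{Stability}, fix some $w \in \dom(\Rc)$. Minimality of $x^k$ gives $\Tc_{\al,y^k}(x^k) \le \Tc_{\al,y^k}(w)$, whose right-hand side stays bounded because $y^k \to y$; hence $\al\Rc(x^k)$ is bounded and, by (ii), so is $(x^k)$. Extract $x^{k(\ell)} \rightharpoonup x_\al$. To see $x_\al \in \argmin\Tc_{\al,y}$, I would combine weak lower semicontinuity of the two terms (using $\Ao x^{k(\ell)} - y^{k(\ell)} \rightharpoonup \Ao x_\al - y$) with the continuity of $v \mapsto \norm{\Ao x - v}^2$ to pass to the limit in $\Tc_{\al,y^{k(\ell)}}(x^{k(\ell)}) \le \Tc_{\al,y^{k(\ell)}}(x)$, obtaining $\Tc_{\al,y}(x_\al) \le \Tc_{\al,y}(x)$ for every $x$. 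The $\Rc$-convergence then follows from a $\liminf$/$\limsup$ squeeze: the same chain shows $\Tc_{\al,y^{k(\ell)}}(x^{k(\ell)}) \to \Tc_{\al,y}(x_\al)$, while the data term is weakly lower semicontinuous, so $\limsup_\ell \Rc(x^{k(\ell)})$ cannot exceed $\Rc(x_\al)$; together with weak lower semicontinuity of $\Rc$ this pins down $\Rc(x^{k(\ell)}) \to \Rc(x_\al)$. When the minimizer is unique, a standard subsequence-of-a-subsequence argument upgrades this to convergence of the full sequence.

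For the \emph{Weak Convergence} part I would pick a solution $\tilde x \in \dom(\Rc)$ of $\Ao x = y$ and exploit minimality against it: since $\Ao\tilde x = y$ and $\snorm{y^k-y}\le\delta_k$,
\[
\tfrac12\norm{\Ao x^k - y^k}^2 + \al_k \Rc(x^k) = \Tc_{\al_k,y^k}(x^k) \le \Tc_{\al_k,y^k}(\tilde x) \le \tfrac12\delta_k^2 + \al_k\Rc(\tilde x) \,.
\]
Dividing the regularization part by $\al_k$ and using $\delta_k^2/\al_k \to 0$ yields $\limsup_k \Rc(x^k) \le \Rc(\tilde x)$, so $(x^k)$ is bounded by (ii); the data part gives $\norm{\Ao x^k - y^k}^2 \le \delta_k^2 + 2\al_k\Rc(\tilde x) \to 0$, hence $\Ao x^k \to y$. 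Passing to a weak limit $x^{k(\ell)} \rightharpoonup x^\mpi$ and using that the bounded linear $\Ao$ is weak-weak continuous, the weak limit of $\Ao x^{k(\ell)}$ equals both $\Ao x^\mpi$ and $y$, so $x^\mpi$ solves $\Ao x = y$; moreover $\Rc(x^\mpi) \le \liminf_\ell \Rc(x^{k(\ell)}) < \infty$, so $x^\mpi \in \dom(\Rc)$. To obtain $\Rc$-convergence I would re-run the displayed estimate with $\tilde x$ replaced by the now-identified solution $x^\mpi$ — legitimate because the minimality inequality holds against any comparison element — giving $\limsup_\ell \Rc(x^{k(\ell)}) \le \Rc(x^\mpi)$, which combined with weak lower semicontinuity forces $\Rc(x^{k(\ell)}) \to \Rc(x^\mpi)$. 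Uniqueness again promotes the subsequence to the whole sequence.

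The routine parts here are the coercivity/compactness/lower-semicontinuity bookkeeping. The one step that needs genuine care — and is the crux of the argument — is upgrading weak convergence to convergence of the regularization values $\Rc(x^{k(\ell)})$. The subtlety is that weak lower semicontinuity yields only one inequality; the matching upper bound must come from testing minimality against the limit point itself, which in the convergence part is available only \emph{after} that limit has been identified as an admissible solution. Arranging the $\liminf$/$\limsup$ squeeze so that the comparison element is exactly the weak limit is the key maneuver, and it is precisely this bound that forces the scaling condition $\delta_k^2/\al_k \to 0$ to enter.
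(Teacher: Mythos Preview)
Your proof is correct and is precisely the standard direct-method argument underlying the results the paper invokes: the paper does not give its own proof but simply cites \cite[Theorems 3.22, 3.23, 3.26]{scherzer2009variational}. What you have written is essentially a self-contained sketch of those theorems, including the key $\liminf$/$\limsup$ squeeze for the regularizer values, so there is no methodological difference to discuss.
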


\begin{proof}
See \cite[Theorems 3.22, 3.23, 3.26]{scherzer2009variational}.
\end{proof}

Lemma~\ref{lem:main}\ref{lem:main4} shows that the filtered DFD can be expressed as an optimization problem with  $\kao$-regularizer $\Rc_\al$. In the stationary case where $\Rc_\al = \al \Rc$ the DFD reduces to a variational regularization. We will show below that this already covers a large class of non-linear filter-based methods.  An example of this relation was given in \cite{Fr19}, where it was shown that the the soft thresholding filter yields a regularization method.

\begin{assumption}[Reduction to variational regularization]\label{ass:A}
Let $(\ph_{\al})_{\al > 0}$ be a non-linear regularizing filter which  satisfies the following additional conditions:
\begin{enumerate} [label=(A\arabic*), leftmargin =2.5em]
\item\label{ass:A1}  $\forall \ka > 0$ $\forall y \in \R$ the set $ (\ph_{\al}(\ka, \cdot)^{-1}(y) - y )/\al $ is independent of $\al > 0$.
\item\label{ass:A2} For some $\tilde\al > 0$ there exist $b,c > 0$ such that for all $\ka > 0$, we have
\begin{equation*}
	\forall x \in \R \colon \quad
	\sabs{x} \leq \min(\ph_{\tilde\al}(\ka_\la,\cdot)^{-1}(c \ka))  \Rightarrow \abs{\ph_{\tilde\al}(\ka,x)} \leq \frac{\ka^2}{\ka^2+\tilde\al b} \sabs{x} \,.
\end{equation*}
\end{enumerate}
\end{assumption}

Assumption~\ref{ass:A1} ensures stationarity and  Assumption~\ref{ass:A2}  coercivity of $\Rc$.
Furthermore, Assumption~\ref{ass:A}  implies that $\ph_{\al}(\ka,\cdot)$ is surjective. If this were not the case, there would exist  $y \in \R$ with $\ph_{\al}(\ka, \cdot)^{-1}(y) = \emptyset$ and then $\lim_{\al \to 0} \ph_{\al}(\ka, \cdot)^{-1}(y) = \emptyset$, which contradicts Lemma \ref{lem:inv-conv}.

\begin{remark}[Generation from $\ph_1$]\label{rem:case1}
Opposed to the general case, filters satisfying Assumption~\ref{ass:A} are uniquely  determined by a single filter function $\ph_1 \colon \R_+ \times \R \to\R$.
Let $\ph_1(\ka,\cdot)$ be monotonically increasing and nonexpansive  with $\ph_1 \geq \ph_1(\ka,0)=0$ for all $\ka > 0$, and assume there exist  $b, c > 0$ such that  $\abs{\ph_1(\ka,x)} \leq  \sabs{x} \, \ka^2 /(\ka^2+b) $ for all $\ka > 0$ and $x \in \R$ with $\sabs{x} \leq \min(\ph_1(\ka,\cdot)^{-1}(c\ka))$.
Then, $\ph_\al$ is uniquely defined by
\begin{equation*}
	\ph_\al(\ka, x) = ((1-\al) \id + \al \ph_1(\ka,\cdot)^{-1})^{-1}(x)  \,.
	\end{equation*}
For all $\al \in (0,1)$ the function $\ph_\al$ is well defined, and, by Lemma \ref{lem:inv-conv}, the family $(\ph_\al)_{\al > 0}$ is a non-linear regularizing filter.
\end{remark}

\begin{example} \label{ex:caseA}
For fixed  $b, d > 0$  consider the function
\begin{equation*}
\ph_1(\ka,x)= \begin{cases} \frac{\ka^2}{\ka^2 + b} \, x & \sabs{x} \leq d / \ka \\
x - \sign(x) \frac{d \, b}{\ka(\ka^2+b)} & \sabs{x} > d / \ka \,.
\end{cases}
\end{equation*}
Then, for all $\ka > 0$, $\ph_1(\ka,\cdot)$ is monotonically increasing, nonexpansive, and $\ph_1(\ka,0)=0$.
Set   $c = d / ( \max_\la  \ka_\la^2 + b)$. Then, $\ph_1(\ka_\la, \cdot)^{-1}(c \ka_\la) = ( ( b+\ka_\la^2) / \ka_\la^2) \, c \ka_\la \leq d/\ka_\la$ and by definition, $\abs{\ph_1(\ka_\la,x)} \leq  \ka_\la^2 / (\ka_\la^2+b) \sabs{x}$ for all $\sabs{x} \leq d / \ka_\la$.
By Remark \ref{rem:case1}
\[
\ph_\al(\ka,x)= \begin{cases} \frac{\ka^2}{\ka^2 + \al b} \, x & \sabs{x} \leq d \frac{\ka^2+\al b}{\ka(\ka^2+b)} \\[0.1cm]
x - \sign(x) \frac{db \al}{\ka(\ka^2+b)} & \sabs{x} > d \frac{\ka^2+\al b}{\ka(\ka^2+b)}.
\end{cases}
\]
defines a non-linear regularizing filter $(\ph_\al)_{\al > 0}$ satisfying Assumption~\ref{ass:A}.  Note that $\ph_\al$ is the proximity operator of the scaled Huber loss function $\al \cdot b/\ka^2 \cdot L( d \ka/(\ka^2 +b),x)$, where $L(\delta,x)=x^2/2 $ if $\abs{x} \leq \delta$ and $L(\delta,x)= \delta (\abs{x}-\delta/2 )$ otherwise, first introduced in \cite{Huber1964}.
\end{example}

\begin{figure}[htb!]
\centering
\begin{tikzpicture}
\draw[->] (-4, 0) -- (4, 0) node[right] {$x$};
\draw[->] (0, -3.5) -- (0, 3.5) node[left] {$\ph_\al(1/3,x)$};
\draw[dashed, gray] (-3.5, -3.5) -- (3.5, 3.5);
\draw[scale=1, domain=-3:3, smooth, variable=\x, black] plot ({\x}, {1/3^2/(1/3^2+1)*\x});
\draw[scale=1, domain=3:3.5, smooth, variable=\x, black] plot ({\x}, {\x- sign(\x)*3/(1/3^2+1)});
\draw[scale=1, domain=-3.5:-3, smooth, variable=\x, black] plot ({\x}, {\x- sign(\x)*3/(1/3^2+1)});
\node at (3.5,1) [right, black] {$\al=1$};
\draw[scale=1, domain=-(3/10+27/10*1/2):(3/10+27/10*1/2), smooth, variable=\x, darkgray] plot ({\x}, {1/3^2/(1/3^2+1/2)*\x});
\draw[scale=1, domain=-3.5:-(3/10+27/10*1/2), smooth, variable=\x, darkgray] plot ({\x}, {\x-sign(\x)*(1/2)*27/10});
\draw[scale=1, domain=(3/10+27/10*1/2):3.5, smooth, variable=\x, darkgray] plot ({\x}, {\x-sign(\x)*(1/2)*27/10});
\node at (3.5,2.2) [right, darkgray] {$\al=1/2$};
\draw[scale=1, domain=-(3/10+27/10*1/4):(3/10+27/10*1/4), smooth, variable=\x, gray] plot ({\x}, {1/3^2/(1/3^2+1/4)*\x});
\draw[scale=1, domain=-3.5:-(3/10+27/10*1/4), smooth, variable=\x, gray] plot ({\x}, {\x-sign(\x)*(1/4)*27/10});
\draw[scale=1, domain=(3/10+27/10*1/4):3.5, smooth, variable=\x, gray] plot ({\x}, {\x-sign(\x)*(1/4)*27/10});
\node at (3.5,2.8) [right, gray] {$\al=1/4$};
\draw[scale=1, domain=-(3/10+27/10*1/8):(3/10+27/10*1/8), smooth, variable=\x, lightgray] plot ({\x}, {1/3^2/(1/3^2+1/8)*\x});
\draw[scale=1, domain=-3.5:-(3/10+27/10*1/8), smooth, variable=\x, lightgray] plot ({\x}, {\x-sign(\x)*(1/8)*27/10});
\draw[scale=1, domain=(3/10+27/10*1/8):3.5, smooth, variable=\x, lightgray] plot ({\x}, {\x-sign(\x)*(1/8)*27/10});
\node at (3.5,3.2) [right, lightgray] {$\al=1/8$};
\end{tikzpicture}
\caption{Illustration of the filter  $(\ph_\al)_{\al > 0}$ from Example~\ref{ex:caseA} that is generated by a single filter function $\ph_1$  and satisfies Assumption~\ref{ass:A}.}
\end{figure}
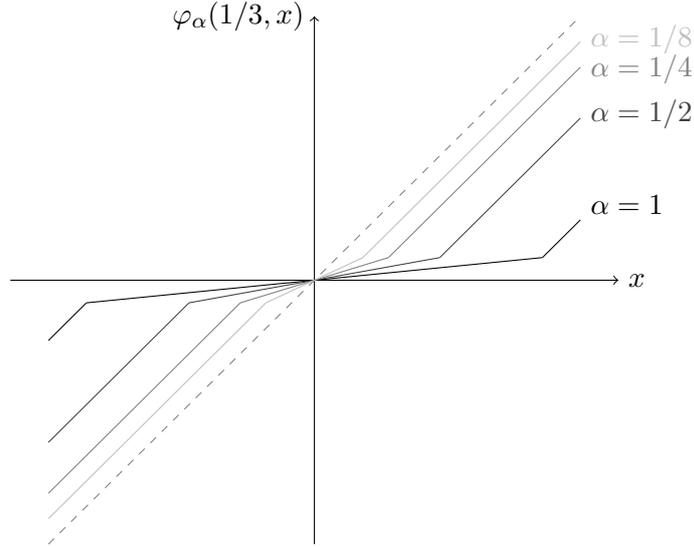

The following Lemma reduces  the non-linear filtered DFD to variational regularization.

\begin{lemma}[Reduction to variational regularization]\label{lem:var-reduction}
Let $(\ph_\al)_{\al > 0}$ be a non-linear regularizing that satisfies Assumption~\ref{ass:A}.  Then $\dom(\Mo^\mpi_\kao \circ \Phi_{\al, \kao})= \ell^2(\La)$ and there exist $\qone_\la \in \Gamma_0(\R)$ with $\qone_\la(0) = 0$ and $\ph_{\al}(\ka_\la,\cdot) = \prox_{\al \qone_\la}$. Define $\Rc = \bigoplus_{\la \in \La} \qone_\la(\ka_\la (\cdot))$, then $\Rc \in \Ga_0(\ell^2(\La))$,  $\Rc$ is norm-coercive
and
\begin{equation} \label{eq:varia}
\forall z \in \ell^2(\La) \colon \quad
\Mo^\mpi_\kao \circ \Phi_{\al, \kao} (z) = \argmin_{x \in \ell^2(\La)}
\frac{1}{2} \norm{\Mo_\kao x -z }^2 + \al \Rc(x)  \,.
\end{equation}
\end{lemma}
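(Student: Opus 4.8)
The plan is to first establish the stationarity claimed in the second assertion, then deduce that $\Rc \in \Ga_0(\ell^2(\La))$ is norm-coercive, use coercivity to identify the domain, and finally read off \eqref{eq:varia} from Lemma~\ref{lem:main}. I would fix $\la$ and set $\qone_\la \coloneqq \qone_{1,\la}$, the functional produced by Lemma~\ref{lem:main} at $\al = 1$, so that $\ph_1(\ka_\la,\cdot) = \prox_{\qone_\la}$. By Lemma~\ref{lem:prox}\ref{lem:prox1} we have $\prox_{\qone_{\al,\la}}^{-1} = \id + \partial \qone_{\al,\la}$, hence the set identity $\partial \qone_{\al,\la}(y) = \ph_\al(\ka_\la,\cdot)^{-1}(y) - y$ holds for every $y$. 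Assumption~\ref{ass:A1} states precisely that $(\ph_\al(\ka_\la,\cdot)^{-1}(y) - y)/\al$ is independent of $\al$; evaluating at $\al = 1$ yields $\partial \qone_{\al,\la}(y) = \al\,\partial \qone_\la(y) = \partial(\al \qone_\la)(y)$ for all $y$. Since two functionals in $\Ga_0(\R)$ with identical subdifferentials differ by a constant and both are normalized to vanish at $0$, I conclude $\qone_{\al,\la} = \al \qone_\la$, so $\ph_\al(\ka_\la,\cdot) = \prox_{\al\qone_\la}$ and the $\kao$-regularizer is stationary, $\Rc_\al = \al \Rc$.

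Next, $\Rc = \bigoplus_\la \qone_\la(\ka_\la(\cdot)) \in \Ga_0(\ell^2(\La))$ follows from Lemma~\ref{lem:prox}\ref{lem:prox6}, each summand lying in $\Ga_0(\R)$ with $\qone_\la(\ka_\la(\cdot)) \geq \qone_\la(0) = 0$. For coercivity I would feed Assumption~\ref{ass:A2} into Lemma~\ref{lem:r-estimate}: because $\ph_{\tilde\al}(\ka_\la,\cdot) = \prox_{\tilde\al \qone_\la}$, condition \ref{ass:A2} is exactly hypothesis \eqref{eq:ass-restimate} with $\qone = \qone_\la$, $\al = \tilde\al$, $\ka = \ka_\la$, and the same $b,c$ serve for every $\la$. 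Lemma~\ref{lem:r-estimate} then supplies the uniform Huber-type lower bounds $\qone_\la(\ka_\la x_\la) \geq (b/2)\abs{x_\la}^2$ when $\abs{x_\la} \leq c$ and $\qone_\la(\ka_\la x_\la) \geq bc\,\abs{x_\la} - bc^2/2 \geq (bc/2)\abs{x_\la}$ when $\abs{x_\la} > c$. Splitting $\La$ into $S_1 = \{\la : \abs{x_\la}\leq c\}$ and $S_2 = \{\la : \abs{x_\la} > c\}$, a bound $\Rc(x) \leq C$ forces $\sum_{S_1}\abs{x_\la}^2 \leq 2C/b$ and $\sum_{S_2}\abs{x_\la} \leq 2C/(bc)$; combined with $\sum_{S_2}\abs{x_\la}^2 \leq (\sum_{S_2}\abs{x_\la})^2$ this bounds $\norm{x}^2$, so all sublevel sets are bounded and $\Rc$ is norm-coercive.

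For $\dom(\Mo^\mpi_\kao\circ\Phi_{\al,\kao}) = \ell^2(\La)$ I would fix $z \in \ell^2(\La)$ and invoke the existence part of Lemma~\ref{lem:variational} with $\Ao = \Mo_\kao$ (bounded since $\sup_\la \ka_\la < \infty$) and the now norm-coercive regularizer $\al\Rc$: the functional $G(x) = \norm{\Mo_\kao x - z}^2/2 + \al\Rc(x)$ attains a minimizer $x^\ast \in \ell^2(\La)$. Since $G$ is separable, $G(x) = \sum_\la (\abs{\ka_\la x_\la - z_\la}^2/2 + \al\qone_\la(\ka_\la x_\la))$, perturbing a single coordinate of $x^\ast$ shows that each $x^\ast_\la$ minimizes its summand, whence $x^\ast_\la = \ka_\la^{-1}\ph_\al(\ka_\la,z_\la)$; thus $(\ka_\la^{-1}\ph_\al(\ka_\la,z_\la))_\la \in \ell^2(\La)$, i.e. $\Phi_{\al,\kao}(z) \in \dom(\Mo^\mpi_\kao)$ and $z$ lies in the domain. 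As $z$ was arbitrary, the domain is all of $\ell^2(\La)$, and \eqref{eq:varia} follows at once from Lemma~\ref{lem:main}\ref{lem:main4}, equation \eqref{eq:main2}, using $\Rc_\al = \al\Rc$.

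The main obstacle is the norm-coercivity: one must extract usable lower bounds on the implicitly defined $\qone_\la$ solely from the contraction estimate \ref{ass:A2} on the proximity operators (this is exactly what Lemma~\ref{lem:r-estimate} achieves) and then convert the mixed quadratic-then-linear Huber bounds into a genuine coercivity statement on $\ell^2(\La)$, where the elementary inequality $\norm{v}_2 \leq \norm{v}_1$ applied on the finite-mass set $S_2$ is the decisive step.
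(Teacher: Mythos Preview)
Your proof is correct and follows the paper's overall strategy: establish stationarity via the subdifferential identity $\partial \qone_{\al,\la} = \ph_\al(\ka_\la,\cdot)^{-1} - \id$ together with \ref{ass:A1}, deduce $\Rc \in \Ga_0(\ell^2(\La))$ from Lemma~\ref{lem:prox}\ref{lem:prox6}, and feed \ref{ass:A2} into Lemma~\ref{lem:r-estimate} for coercivity. Two steps differ from the paper and are worth noting.

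For coercivity, the paper argues by contradiction along sequences: given $\norm{y^k}_2 \to \infty$ it distinguishes three subsequence regimes ($\norm{y^k}_\infty \to \infty$, $\abs{\{\la : \abs{y^k_\la} > c\}} \to \infty$, or both bounded) and shows $\Rc(y^k) \to \infty$ in each. Your direct sublevel-set bound via $\sum_{S_2}\abs{x_\la}^2 \leq (\sum_{S_2}\abs{x_\la})^2$ is shorter and yields an explicit bound $\norm{x}^2 \leq 2C/b + 4C^2/(bc)^2$ on $\Lc(\Rc,C)$; this is a genuine simplification.

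For the domain identity, the paper defers to the later Proposition~\ref{prop:dom-full}, which establishes $\dom(\Mo^\mpi_\kao \circ \Phi_{\al,\kao}) = \ell^2(\La)$ directly from \ref{ass:B2} without using coercivity. Your route instead bootstraps: first prove coercivity, then invoke the existence clause of Lemma~\ref{lem:variational} to obtain a minimizer $x^\ast \in \ell^2(\La)$, and exploit separability to identify $x^\ast_\la = \ka_\la^{-1}\ph_\al(\ka_\la,z_\la)$. This is self-contained within Section~\ref{ssec:case1} and avoids the forward reference, at the cost of relying on the variational existence theorem rather than a direct estimate.
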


\begin{proof}
The identity  $\dom(\Mo^\mpi_\kao \circ \Phi_{\al, \kao})= \ell^2(\La)$ will be shown  shown under more general assumptions in Lemma \ref{prop:dom-full} below.
By Lemma~\ref{lem:main}, there exist $f_{\al, \la} \in \Gamma_0(\R)$ with $f_{\al,\la} \geq f_{\al,\la}(0) = 0$ such that $\ph_{\al}(\ka_\la, \cdot) = \prox_{ f_{\al,\la}} = (\id + \partial f_{\al,\la})^{-1}$, so
\[\partial f_{\al,\la}(y) = \ph_{\al}(\ka_\la,\cdot)^{-1}(y)-y. \]
Since $\partial (f_{\al,\la}/\al)(y)$ is independent of $\al$, we define $\qone_\la \coloneqq f_{\al,\la}/\al$ and we have $\ph_{\al}(\ka_\la,\cdot) = (\id + \partial (\al \qone_\la))^{-1} = \prox_{\al \qone_\la}$.
Note that $\qone_\la$ is also proper, convex, and lower semi-continuous and $\qone_\la(0)=0$. Moreover, $\Rc=\bigoplus_{\la \in \La} \qone_\la(\ka_\la (\cdot))$ is  positive, $\Rc \in \Ga_0(\ell^2(\La))$ and satisfies \eqref{eq:varia}.
To show the norm-coercivity of $\Rc$ consider Lemma~\ref{lem:r-estimate}.
For ${y = (y_\la)_\la \in \ell^2(\La)}$,
\begin{multline*}
\Rc(y)  = \sum_{\la \in \La} \qone_\la(\ka_\la y_\la)
 = \sum_{\sabs{\ka_\la y_\la} \leq c \ka_\la} \qone_\la(\ka_\la y_\la) + \sum_{\sabs{\ka_\la y_\la} > c \ka_\la} \qone_\la(\ka_\la y_\la) \\
 \geq \sum_{\sabs{y_\la} \leq c} b\sabs{y_\la}^2/2 + \sum_{\sabs{y_\la} > c} \bigl(b c \sabs{y_\la}-  b c^2 /2 \bigr)
  \geq \frac{b}{2} \sum_{\sabs{y_\la} \leq c} \sabs{y_\la}^2 + (b c^2 /2) \abs{\{ \la \mid \sabs{y_\la} > c\}}.
\end{multline*}
Now let $(y^k)_k \in \dom(\Rc)^\N$ be a sequence with $\snorm{y^k}_2 \to \infty$ as $k \to \infty$ and define $N_k \coloneqq \sabs{\{\la \mid \sabs{y_\la^k} > c \}}$.  We now show that  $(y^k)_k$ can be covered by subsequences $\tau_i$ with  $\Rc(y^{\tau_i(k)})_k \to \infty$ for $i=1,2,3$.
For all subsequences $(\tau_1(k))_k$ with $\norm{y^{\tau_1(k)}}_\infty \to \infty$ we have for $k$ large enough that
\[ \Rc ( y^{\tau_1(k)}) \geq bc \sum_{\sabs{y_\la^{\tau_1(k)}} > c} \Bigl(\sabs{y_\la^{\tau_1(k)}}-\frac{c}{2}\Bigr) \geq bc \Bigl(\snorm{y^{\tau_1(k)}}_\infty -\frac{c}{2}\Bigr)  \to \infty. \]
For subsequences $(\tau_2(k))_k$ with $N_{\tau_2(k)} \to \infty$ we have $\Rc ( y^{\tau_2(k)}) \geq bc^2 N_{\tau_2(k)} /2  \to \infty$.
Finally, the rest are subsequences $(\tau_3(k))_k$, where there exists a constant $M> 0$ such that $\snorm{y^{\tau_3(k)}}_\infty \leq M$ and $N_{\tau_3(k)}$ is bounded, then
\[ \Rc(y^{\tau_3(k)}) \geq \frac{b}{2} \biggl( \snorm{y^{\tau_3(k)}}_2^2 - \sum_{\sabs{y_\la^{\tau(k)}} > c } \sabs{y_\la^{\tau(k)}}^2 \biggr)
\geq \frac{b}{2} \snorm{y^{\tau_3(k)}}_2^2 - \frac{b}{2}M N_{\tau_3(k)} \to \infty. \]
Hence $\Rc(y^k) \to \infty$, which shows that $\Rc$ is coercive.
\end{proof}

Due to Assumption~\ref{ass:A}, the operator $(\Phi_{\al,\kao}^{-1} - \id)/\al$ is independent of $\al$ and by Lemma \ref{lem:domR} we have
\begin{equation*}
	\dom\left( \frac{\Phi_{\al,\kao}^{-1} - \id}{\al} \circ \Mo_{\kao}\right) \subseteq \dom(\partial \Rc) \subseteq \dom(\Rc) \,.
\end{equation*}
From Lemmas \ref{lem:var-reduction} and \ref{lem:variational}  we obtain the following.

\begin{proposition}[Existence, Stability, Weak Convergence]\label{prop:caseA}
Let $(\ph_\al)_{\al > 0}$ be a non-linear regularizing filter such that Assumption~\ref{ass:A} is satisfied.
Suppose $\al > 0$, and $z, z^k \in \ell^2(\La)$ for $k \in \N$ with $(z^k)_{k \in \N} \to z$  and $c_\al \coloneqq  \Mo_{\kao}^\mpi \circ \Phi_{\al, \kao} (z)$.

\begin{itemize}
\item Existence: $ \dom(\Mo_{\kao}^\mpi \circ \Phi_{\al, \kao}) = \ell^2(\La)$.

\item Stability:  With $c^k \coloneqq  \Mo_{\kao}^\mpi \circ \Phi_{\al, \kao} (z^k)$ we have $c^k \rightharpoonup c_\al$ and $\Rc(c^k) \rightarrow \Rc(c_\al)$ as $k \to \infty$.

\item Weak Convergence:  Let $z =\Mo_{\kao}c^\mpi \in \dom\left( \frac{\Phi_{\al,\kao}^{-1} - \id}{\al}\right)$ with  $\snorm{z^k -z} \leq \delta_k$ where $\delta_k \to 0$.
Consider $\al_k \to 0$ such that $\delta_k^2/\al_k \to 0$ and define $c^k = \Mo_{\kao}^\mpi \circ \Phi_{\al_k, \kao} (z^k)$.
Then ${c^k  \rightharpoonup c^\mpi}$
and $\Rc(c^k) \rightarrow \Rc(c^\mpi)$  as $k \to \infty$.
\end{itemize}
\end{proposition}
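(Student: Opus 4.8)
The plan is to read off all three assertions from the variational reduction already established in Lemma~\ref{lem:var-reduction} and then feed the abstract regularization result Lemma~\ref{lem:variational} with the concrete choice $\Ao = \Mo_\kao$. Indeed, Lemma~\ref{lem:var-reduction} supplies a norm-coercive $\Rc \in \Ga_0(\ell^2(\La))$ together with the identity \eqref{eq:varia}, which states precisely that $\Mo^\mpi_\kao \circ \Phi_{\al,\kao}(z)$ is the minimizer of the Tikhonov functional $\Tc_{\al,z}(x) = \norm{\Mo_\kao x - z}^2/2 + \al \Rc(x)$ attached to the forward operator $\Mo_\kao$. Since $\Mo_\kao$ is multiplication by $(\ka_\la)_\la$ with all $\ka_\la > 0$ (and bounded because $\sup_\la \ka_\la < \infty$), it is injective, so the quadratic data term is strictly convex and $\Tc_{\al,z}$ has a \emph{unique} minimizer for every $z \in \ell^2(\La)$; the same injectivity makes $\Mo_\kao x = z$ uniquely solvable whenever it is solvable. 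The existence claim $\dom(\Mo^\mpi_\kao \circ \Phi_{\al,\kao}) = \ell^2(\La)$ is then immediate, being already part of Lemma~\ref{lem:var-reduction}.

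For stability I would apply the Stability clause of Lemma~\ref{lem:variational} to the sequence $z^k \to z$. Because the minimizer of $\Tc_{\al,z}$ is unique, that clause yields the full-sequence weak convergence $c^k \rightharpoonup c_\al$. However, it only provides $\Rc$-convergence along a subsequence, so the residual task is to upgrade this to the full sequence. I would do so by a subsequence-of-subsequence argument: any subsequence of $(z^k)$ still converges to $z$, so the lemma applied to it extracts a further subsequence along which $\Rc$ converges to $\Rc(c_\al)$, the weak limit being forced to $c_\al$ by uniqueness. Hence every subsequence of $(\Rc(c^k))$ has a further subsequence tending to $\Rc(c_\al)$, which gives $\Rc(c^k) \to \Rc(c_\al)$.

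For weak convergence I must first verify the hypotheses of the corresponding clause of Lemma~\ref{lem:variational}, namely that $\Mo_\kao x = z$ has a solution in $\dom(\Rc)$. This is exactly where the domain condition $z \in \dom\bigl((\Phi_{\al,\kao}^{-1} - \id)/\al\bigr)$ enters: writing $z = \Mo_\kao c^\mpi$ and invoking the inclusion $\dom\bigl((\Phi_{\al,\kao}^{-1} - \id)/\al \circ \Mo_\kao\bigr) \subseteq \dom(\Rc)$ recorded before the proposition (a consequence of Lemma~\ref{lem:domR}), I conclude $c^\mpi \in \dom(\Rc)$. With $\delta_k, \al_k, \delta_k^2/\al_k \to 0$, the Weak Convergence clause then applies with forward operator $\Mo_\kao$; injectivity makes $c^\mpi$ the unique solution, so the clause delivers the full-sequence $c^k \rightharpoonup c^\mpi$, and $\Rc$-convergence is again promoted from a subsequence to the full sequence by the same argument, using that subsequences of $(z^k), (\delta_k), (\al_k)$ retain all the required rate conditions.

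The argument is essentially bookkeeping once the reduction of Lemma~\ref{lem:var-reduction} is in hand; I do not expect a deep obstacle. The two points that genuinely require care are: (i) confirming that the abstract hypotheses of Lemma~\ref{lem:variational} are truly met, in particular the solvability \emph{within} $\dom(\Rc)$, which hinges on correctly translating the stated domain condition through the inclusion from Lemma~\ref{lem:domR}; and (ii) the passage from subsequential to full-sequence convergence of $\Rc(c^k)$, which relies crucially on the uniqueness of the minimizer and of the solution, both guaranteed by the injectivity of $\Mo_\kao$.
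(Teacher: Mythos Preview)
Your proposal is correct and follows essentially the same route as the paper: the paper's proof is the single line ``Follows from Lemmas~\ref{lem:variational} and~\ref{lem:var-reduction} and the injectivity of $\Mo_{\kao}$,'' and you have faithfully unpacked exactly that. Your additional care with the subsequence-of-subsequence argument to promote $\Rc(c^k) \to \Rc(c_\al)$ from subsequential to full-sequence convergence, and your explicit verification that $c^\mpi \in \dom(\Rc)$ via the inclusion recorded just before the proposition, are details the paper leaves to the reader but which are entirely consistent with its intended argument.
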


\begin{proof}
Follows from  Lemmas \ref{lem:variational} and \ref{lem:var-reduction} and the injectivity of $\Mo_{\kao}$.
\end{proof}

Additional results in variational regularization are known, indicating convergence in the norm topology and convergence rates under more stringent assumptions. However, the exploration of the connection between these assumptions and filter-based methods falls outside the scope of this paper. The detailed analysis of these additional results is reserved for future research.

\begin{remark}[Strong Convergence]
In the context of Proposition \ref{prop:caseA}, under the assumption that $\Rc$ is totally convex at $c^+$ as established in Proposition 3.32 of \cite{scherzer2009variational}, the sequence $c^k$ converges to $c^+$ in the norm topology. 
\end{remark}
\begin{remark}[Convergence Rates]
Utilizing Theorem 3.42 of \cite{scherzer2009variational}, we can also establish convergence rates under a suitable source condition.
In the context of Proposition \ref{prop:caseA}, assuming that $\Rc$ is totally convex at $c^+$ and that there exists a collection $\omega_\la \in \partial s_\la(\ka_\la c^+_\la)$ such that $\omega = (\omega_\la)_{\la \in \La} \in \ell^2(\La)$ the following inequalities hold:
\[D_{\Mo_{\kao}\omega}(c^k,c^+) \leq C \delta_k \quad \text{and} \quad \norm{\Mo_{\kao}c^k- z^k}\leq \tilde C \delta_k,
\]
where $D_{\Mo_{\kao}\omega}(c^k,c^+)\coloneqq \Rc(c^k)-\Rc(c^+) - \inner{\Mo_{\kao}\omega}{c^k-c^+}$ represents the Bregman-distance of $\Rc$ at $c^+$ and $\Mo_{\kao}\omega \in \partial \Rc(c^+)$ (see \cite{Bu04}).
\end{remark}

\subsection{Direct analysis in the general case}\label{sec:case3}
\label{ssec:case2}

Assumption~\ref{ass:A1} imposes that $\Rc_\al$ must take the form $\al \Rc$, though such a constraint is not essential.
In this section, we relax the additional conditions on $(\ph_\al)_{\al > 0}$. While using approaches similar to variational regularization, we focus on the diagonal structure of $\Mo_{\kao} \circ \Phi_{\al,\kao}$ instead of relying on linear dependence with $\al$.

\begin{assumption} \label{ass:B}
Let $(\ph_{\al})_{\al > 0}$ be a non-linear regularizing filter which  satisfies the following conditions:
\begin{enumerate}[label=(B\arabic*), leftmargin =2.5em]
\item \label{ass:B1} $\forall \ka > 0$  $\forall x \in \R \colon (\abs{\ph_\al(\ka,x)})_{\al > 0}$ is monotonically increasing for $\al\downarrow 0$.
\item \label{ass:B2} $ \exists d, e> 0$ $\forall \ka > 0$  $ \forall \al > 0$ $ \forall x \in \R \colon
\sabs{x} \leq d \al / \ka \Rightarrow \abs{\ph_{\al}(\ka,x)} \leq \frac{e \ka}{\sqrt{\al}} \sabs{x}$.
\end{enumerate}
\end{assumption}

Next we show that Assumption~\ref{ass:A} is weaker than Assumption~\ref{ass:B}, hence the results  in this section generalize the results  of variational regularization.

\begin{lemma}
If the non-linear regularizing filter $(\ph_{\al})_{\al > 0}$ satisfies Assumption~\ref{ass:A} with constants $b,c >0$, then it satisfies Assumption \ref{ass:B} with $d= bc$ and $e=1/(2\sqrt{b})$.
\end{lemma}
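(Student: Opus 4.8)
The plan is to first reduce to the stationary proximal representation and then verify the two conditions of Assumption~\ref{ass:B} separately. Since the filter satisfies \ref{ass:A1}, the argument in the proof of Lemma~\ref{lem:var-reduction} applies verbatim for every fixed $\ka>0$ (not only for the $\ka_\la$ of the DFD): there is a single $\qone_\ka\in\Ga_0(\R)$ with $\qone_\ka\geq\qone_\ka(0)=0$ such that $\ph_\al(\ka,\cdot)=\prox_{\al\qone_\ka}$ for all $\al>0$. Throughout I work with $x\geq 0$; the case $x\leq 0$ is symmetric because $\qone_\ka$ is minimized at $0$. Writing $p=\prox_{\al\qone_\ka}(x)\in[0,x]$, the optimality condition (Lemma~\ref{lem:prox}\ref{lem:prox1}) reads $u:=(x-p)/\al\in\partial\qone_\ka(p)$ with $u\geq 0$.

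For \ref{ass:B1} I would show that $\al\mapsto p_\al:=\prox_{\al\qone_\ka}(x)$ is nonincreasing, which is exactly the claimed monotonicity of $|\ph_\al(\ka,x)|$ as $\al\downarrow 0$. Assume $\al_1<\al_2$ but $p_{\al_1}<p_{\al_2}$. The two optimality conditions give $u_i=(x-p_{\al_i})/\al_i\in\partial\qone_\ka(p_{\al_i})$; monotonicity of the subdifferential forces $u_1\leq u_2$, while $p_{\al_1}<p_{\al_2}$ together with $\al_1<\al_2$ gives $u_1=(x-p_{\al_1})/\al_1>(x-p_{\al_2})/\al_2=u_2$, a contradiction. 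Hence $p_{\al_1}\geq p_{\al_2}$, which is \ref{ass:B1}. This step uses only stationarity and the proximal structure, not \ref{ass:A2}.

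The heart of the matter is \ref{ass:B2}, and I expect the main obstacle to be that the mere value bound $\qone_\ka(y)\geq (b/2)\sabs{y/\ka}^2$ supplied by Lemma~\ref{lem:r-estimate} is \emph{too weak}: dominating this parabola only yields $\sabs{p}\leq \tfrac{2\ka^2}{\ka^2+\al b}\sabs{x}$, which falls short of the stated $e=1/(2\sqrt b)$ by a factor of two (in fact there are convex $\qone_\ka$ above this parabola whose proximity operator violates \ref{ass:B2}, but they must violate \ref{ass:A2}). Instead I would extract from \ref{ass:A2} a \emph{slope} bound. Fix $\tilde p\in[0,c\ka)$ and any $\tilde u\in\partial\qone_\ka(\tilde p)$, and set $x:=\tilde p+\tilde\al\tilde u$; then $\prox_{\tilde\al\qone_\ka}(x)=\tilde p$ by Lemma~\ref{lem:prox}\ref{lem:prox1}, and since $\prox_{\tilde\al\qone_\ka}$ is increasing with $\prox_{\tilde\al\qone_\ka}(\min\ph_{\tilde\al}(\ka,\cdot)^{-1}(c\ka))=c\ka>\tilde p$, we get $x\leq\min\ph_{\tilde\al}(\ka,\cdot)^{-1}(c\ka)$. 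Thus \ref{ass:A2} applies and gives $\tilde p\leq\tfrac{\ka^2}{\ka^2+\tilde\al b}(\tilde p+\tilde\al\tilde u)$, which rearranges to $\tilde u\geq (b/\ka^2)\tilde p$. As this holds for every $\tilde u\in\partial\qone_\ka(\tilde p)$, we obtain the $\al$-independent slope bound $\min\partial\qone_\ka(p)\geq (b/\ka^2)p$ for all $p\in[0,c\ka)$.

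With this slope bound \ref{ass:B2} follows quickly for every $\al>0$. First, the constraint $\sabs{x}\leq d\al/\ka=bc\al/\ka$ forces $p<c\ka$: if $p\geq c\ka$, then monotonicity of $\partial\qone_\ka$ gives $u\geq bc/\ka$, whence $x=p+\al u\geq c\ka+\al bc/\ka>bc\al/\ka\geq x$, a contradiction. Hence $p<c\ka$, so $u\geq (b/\ka^2)p$ and $x=p+\al u\geq p(1+\al b/\ka^2)$, giving $\sabs{p}\leq \tfrac{\ka^2}{\ka^2+\al b}\sabs{x}$. Finally the arithmetic--geometric mean inequality $\ka^2+\al b\geq 2\ka\sqrt{\al b}$ yields $\tfrac{\ka^2}{\ka^2+\al b}\leq\tfrac{\ka}{2\sqrt{\al b}}=\tfrac{e\ka}{\sqrt\al}$ with $e=1/(2\sqrt b)$, which is precisely \ref{ass:B2}. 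Together with the first part this establishes Assumption~\ref{ass:B} with $d=bc$ and $e=1/(2\sqrt b)$.
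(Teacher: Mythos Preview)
Your proof is correct and follows essentially the same route as the paper's: reduce to the stationary representation $\ph_\al(\ka,\cdot)=\prox_{\al\qone_\ka}$, derive \ref{ass:B1} from monotonicity of $\partial\qone_\ka$, and for \ref{ass:B2} extract the $\al$-independent slope bound $\partial\qone_\ka(p)\geq (b/\ka^2)p$ on $[0,c\ka)$ (what the paper abbreviates as ``following the proof of Lemma~\ref{lem:r-estimate}''), then conclude via the AM--GM estimate $\ka^2+\al b\geq 2\ka\sqrt{\al b}$. Your write-up is more explicit than the paper's in spelling out why $\sabs{x}\leq bc\al/\ka$ forces $p<c\ka$, but the underlying argument is the same.
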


\begin{proof}
Let $(\ph_{\al})_{\al > 0}$ satisfy Assumption~\ref{ass:A} with constants $b,c >0$ and let $\al > 0$, $\ka> 0$ and $y \in \R$ be arbitrary. By Lemma \ref{lem:var-reduction} and Lemma \ref{lem:prox} we have $\ph_{\al}(\ka,\cdot)^{-1}(y) = y + \al \partial \qone(y)$, where $\qone \in \Ga_0(\R)$.
Since $\partial \qone (y)$ is a closed interval, the maximum value of $\abs{\partial \qone (y)}$ exists and $\max \sabs{\ph_{\al}(\ka,\cdot)^{-1}(y)} < \max \sabs{\ph_{\bar\al}(\ka,\cdot)^{-1}(y)}$ for all $\al < \bar{\al}$.
Therefore, $\abs{\ph_{\al}(\ka,y)} < \abs{\ph_{\tilde\al}(\ka,y)}$ which shows \ref{ass:B1}.
Choose $\tilde\al > 0$ such that $\abs{\ph_{\tilde\al}(\ka,x)} \leq \sabs{x} \ka^2 / (\ka^2 + \tilde\al b) $ for all $x \in \R$ with $\sabs{x} \leq \min \ph_{\tilde\al}(\ka,\cdot)^{-1}(c \ka) = c\ka + \tilde\al \min \partial \qone(c \ka)$.
Following the proof of Lemma \ref{lem:r-estimate}, one shows that if $b$ and $c$ are suitable constants to satisfy \ref{ass:A2} for one $\tilde\al > 0$, they are for all $\al > 0$.
Now set $d \coloneqq bc$ and $e \coloneqq 1/(2 \sqrt{b})$. Then,
\[
\abs{\ph_{\al}(\ka,x)} \leq \frac{\ka^2}{\ka^2 + \al b} \sabs{x} = \frac{\sqrt{\al} \ka}{\ka^2 + \al b} \cdot \frac{\ka}{\sqrt{\al}} \sabs{x} \leq e \frac{\ka}{\sqrt{\al}} \sabs{x}
\]
for all $x \in \R$ with $\sabs{x} \leq c \ka + \al \min \partial \qone(c\ka)$, whereas $\min \partial \qone(c\ka) \geq d/\ka$. Hence  the inequality holds for $\sabs{x} \leq \al d / \ka$, showing \ref{ass:B2}.
\end{proof}

\begin{example}\label{ex:caseB}
Fix the constants $b, d > 0$ and consider the function
\begin{equation*}
\ph_\al(\ka,x)= \begin{cases} \frac{\ka^2}{\ka^2 + \al b} \, x & \sabs{x} \leq d \al /  \ka \\
x - \sign(x) \frac{db \al^2}{\ka(\ka^2+\al b)} & \sabs{x} > d \al /  \ka   \,.
\end{cases}
\end{equation*}
$(\ph_\al)_{\al > 0}$ is a non-linear regularizing filter and satisfies Assumption \ref{ass:B}.
Note that $\ph_1$ is the same function as in Example~\ref{ex:caseA} but for $\al \neq 1$ the filter functions differ.
By Remark~\ref{rem:case1} the construction of the non-linear regularizing filter satisfying Assumption~\ref{ass:A} is uniquely given by $\ph_1$. Thus, $(\ph_\al)_{\al > 0}$ does not satisfy Assumption~\ref{ass:A}. Especially, $\ph_\al$ is the proximity function of the Huber function $\al \cdot b/\ka^2 \cdot L(d \al \ka/(\ka^2+\al b),x)$, where we clearly see that the resulting regularizing functional is non-stationary.  In Section \ref{sec:num}, we compare this regularizing filter with that presented in  in Example \ref{ex:caseA} and the soft thresholding filter in terms of numerical rates.
\end{example}

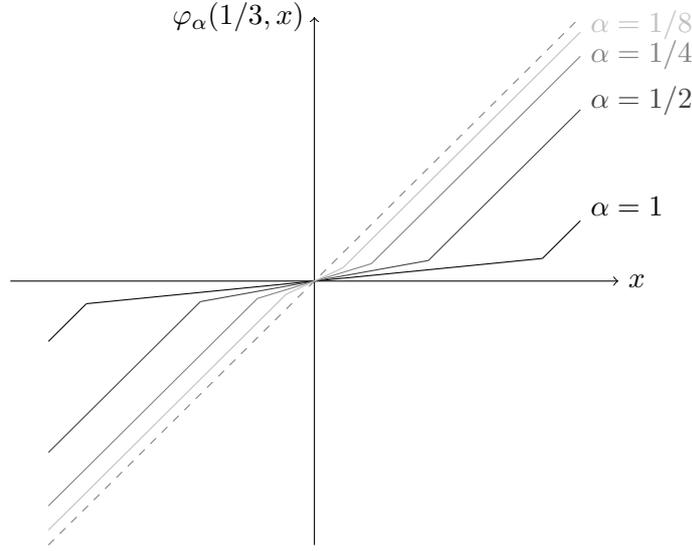
\begin{figure}[htb!]
\centering
\begin{tikzpicture}
\draw[->] (-4, 0) -- (4, 0) node[right] {$x$};
\draw[->] (0, -3.5) -- (0, 3.5) node[left] {$\ph_\al(1/3,x)$};
\draw[dashed, gray] (-3.5, -3.5) -- (3.5, 3.5);
\draw[scale=1, domain=-3:3, smooth, variable=\x, black] plot ({\x}, {1/3^2/(1/3^2+1)*\x});
\draw[scale=1, domain=3:3.5, smooth, variable=\x, black] plot ({\x}, {\x- sign(\x)*3/(1/3^2+1)});
\draw[scale=1, domain=-3.5:-3, smooth, variable=\x, black] plot ({\x}, {\x- sign(\x)*3/(1/3^2+1)});
\node at (3.5,1) [right, black] {$\al=1$};
\draw[scale=1, domain=-(3*1/2):(3*1/2), smooth, variable=\x, darkgray] plot ({\x}, {1/3^2/(1/3^2+1/2)*\x});
\draw[scale=1, domain=-3.5:-(3*1/2), smooth, variable=\x, darkgray] plot ({\x}, {\x-sign(\x)*(1/2)^2*27/(1+9*(1/2))});
\draw[scale=1, domain=(3*1/2):3.5, smooth, variable=\x, darkgray] plot ({\x}, {\x-sign(\x)*(1/2)^2*27/(1+9*(1/2))});
\node at (3.5,2.4) [right, darkgray] {$\al=1/2$};
\draw[scale=1, domain=-(3*1/4):(3*1/4), smooth, variable=\x, gray] plot ({\x}, {1/3^2/(1/3^2+1/4)*\x});
\draw[scale=1, domain=-3.5:-(3*1/4), smooth, variable=\x, gray] plot ({\x}, {\x-sign(\x)*(1/4)^2*27/(1+9*1/4)});
\draw[scale=1, domain=(3*1/4):3.5, smooth, variable=\x, gray] plot ({\x}, {\x-sign(\x)*(1/4)^2*27/(1+9*1/4)});
\node at (3.5,3) [right, gray] {$\al=1/4$};
\draw[scale=1, domain=-(3*1/8):(3*1/8), smooth, variable=\x, lightgray] plot ({\x}, {1/3^2/(1/3^2+1/8)*\x});
\draw[scale=1, domain=-3.5:-(3*1/8), smooth, variable=\x, lightgray] plot ({\x}, {\x-sign(\x)*(1/8)^2*27/(1+9*1/8)});
\draw[scale=1, domain=(3*1/8):3.5, smooth, variable=\x, lightgray] plot ({\x}, {\x-sign(\x)*(1/8)^2*27/(1+9*1/8)});
\node at (3.5,3.4) [right, lightgray] {$\al=1/8$};
\end{tikzpicture}
\caption{Illustration of the filter of Example~\ref{ex:caseB} that satisfies Assumption~\ref{ass:B} but not Assumption~\ref{ass:A}.}
\end{figure}

\begin{proposition}[Existence]\label{prop:dom-full}
Let $(\ph_{\al})_{\al > 0}$ be non-linear regularizing filter such that \ref{ass:B2} is satisfied.
Then, $\dom(\Mo^\mpi_{\kao} \circ \, \Phi_{\al,\kao} )= \ell^2(\La) $.
\end{proposition}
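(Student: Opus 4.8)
The plan is to identify $\dom(\Mo^\mpi_\kao)$ explicitly and then verify that $\Phi_{\al,\kao}(z)$ always lands inside it. Since all $\ka_\la > 0$, the multiplication operator $\Mo_\kao$ is injective, so $\ran(\Mo_\kao)^\perp = \ker(\Mo_\kao) = \{0\}$ and hence $\dom(\Mo^\mpi_\kao) = \ran(\Mo_\kao) = \{w \in \ell^2(\La) \mid \sum_\la \sabs{w_\la}^2/\ka_\la^2 < \infty\}$. Consequently, fixing $\al > 0$, membership $z \in \dom(\Mo^\mpi_\kao \circ \Phi_{\al,\kao})$ is equivalent to $\sum_\la \sabs{\ph_\al(\ka_\la, z_\la)}^2/\ka_\la^2 < \infty$, and it suffices to establish this bound for an arbitrary $z = (z_\la)_\la \in \ell^2(\La)$.

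To this end, I would split the index set into $\La_1 \coloneqq \{\la \mid \sabs{z_\la} \le d\al/\ka_\la\}$ and $\La_2 \coloneqq \La \setminus \La_1$, where $d, e$ are the constants from \ref{ass:B2}. On $\La_1$ the assumption \ref{ass:B2} directly gives $\sabs{\ph_\al(\ka_\la, z_\la)} \le (e\ka_\la/\sqrt\al)\sabs{z_\la}$, whence $\sabs{\ph_\al(\ka_\la, z_\la)}^2/\ka_\la^2 \le (e^2/\al)\sabs{z_\la}^2$; summing over $\La_1$ yields the finite bound $(e^2/\al)\snorm{z}^2$.

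For $\La_2$, I would exploit the standing assumption $K \coloneqq \sup_\la \ka_\la < \infty$: every $\la \in \La_2$ satisfies $\sabs{z_\la} > d\al/\ka_\la \ge d\al/K$, so because $z \in \ell^2(\La)$, the set $\La_2$ is finite. By nonexpansiveness \ref{def:filter2} together with $\ph_\al(\ka_\la, 0) = 0$ from \ref{def:filter3}, one has $\sabs{\ph_\al(\ka_\la, z_\la)} \le \sabs{z_\la}$, so each of the finitely many terms $\sabs{\ph_\al(\ka_\la, z_\la)}^2/\ka_\la^2 \le \sabs{z_\la}^2/\ka_\la^2$ is finite. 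Adding the contributions of $\La_1$ and $\La_2$ shows the full sum is finite, so $z \in \dom(\Mo^\mpi_\kao \circ \Phi_{\al,\kao})$; as $z$ was arbitrary this gives $\dom(\Mo^\mpi_\kao \circ \Phi_{\al,\kao}) = \ell^2(\La)$.

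The only real subtlety is the correct identification of $\dom(\Mo^\mpi_\kao)$ as the weighted-summability set $\ran(\Mo_\kao)$, which rests on injectivity of $\Mo_\kao$, and the observation that the large-coefficient block $\La_2$ is automatically finite because $\ka_\la$ is bounded above. Once these are in place, \ref{ass:B2} controls the small-coefficient block uniformly and nonexpansiveness disposes of the finite remainder, so no further estimates are needed.
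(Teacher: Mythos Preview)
Your proof is correct and follows essentially the same route as the paper: split the index set according to whether $\sabs{z_\la} \le d\al/\ka_\la$, apply \ref{ass:B2} on the small-coefficient block to obtain the $(e^2/\al)\snorm{z}^2$ bound, and use $\sup_\la \ka_\la < \infty$ to conclude the large-coefficient block is finite. You add a little more detail than the paper (the explicit description of $\dom(\Mo^\mpi_\kao)$ via self-adjointness and injectivity of $\Mo_\kao$, and the explicit appeal to \ref{def:filter2}--\ref{def:filter3} for the finite remainder), but the argument is the same.
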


\begin{proof}
Let $\al > 0$,  $x = (x_\la)_{\la \in \La} \in \ell^2(\La)$ and $d, e > 0$ such that $\forall\al > 0\colon
\abs{\ph_{\al}(\ka_\la,z)} \leq \sabs{z} e \ka_\la/\sqrt{\al} $ for all $z\in \R$ with $\sabs{z} \leq d \al / \ka_\la$.
Since $\sup_{\la \in \La} \ka_\la < \infty$ there are only finitely many $\la$ such that $\sabs{x_\la} > d \al / \ka_\la$. Thus,
\begin{multline*}
\norm{\Mo^\mpi_{\kao} \circ \, \Phi_{\al,\kao}(x)}^2 = \sum_{\la \in \La} \frac{\sabs{\ph_{\al}(\ka_\la,x_\la)}^2}{\ka_\la^2} \\
= \sum_{\sabs{x_\la}\leq d \al / \ka_\la } \frac{\sabs{\ph_{\al}(\ka_\la,x_\la)}^2}{\ka_\la^2}+ \sum_{\sabs{x_\la} > d \al / \ka_\la} \frac{\sabs{\ph_{\al}(\ka_\la,x_\la)}^2}{\ka_\la^2}
\leq \sum_{\sabs{x_\la}\leq d \al /\ka_\la } \frac{e^2 }{\al}\sabs{x_\la}^2 + C < \infty \,.
\end{multline*}
\end{proof}

Under Assumption \ref{ass:B}, we can show that the mappings $\Mo_{\kao} \circ \Phi_{\al,\kao}$ are strong-weak continuous and converges to $\Mo_{\kao}$ as $\al \to 0$.

\begin{proposition}[Stability]\label{prop:stabC}
Let $(\ph_{\al})_{\al > 0}$ be non-linear regularizing filter such that Assumption \ref{ass:B} is satisfied.
Let $\al > 0$ be fixed and $z,z^k  \in \ell^2(\La)$  such that $\snorm{z-z^k}_2 \to 0$.  Then $\Mo^\mpi_{\kao} \circ \Phi_{\al,\kao} (z^k)  \rightharpoonup \Mo^\mpi_{\kao} \circ \, \Phi_{\al,\kao} (z)$ as  $k \to \infty$.
\end{proposition}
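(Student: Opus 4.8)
The plan is to set $c^k \coloneqq \Mo^\mpi_{\kao} \circ \Phi_{\al,\kao}(z^k)$ and $c \coloneqq \Mo^\mpi_{\kao} \circ \Phi_{\al,\kao}(z)$, which are well-defined elements of $\ell^2(\La)$ by Proposition~\ref{prop:dom-full}, and to establish $c^k \rightharpoonup c$ through the standard criterion in $\ell^2(\La)$: a bounded sequence that converges coordinate-wise converges weakly to the coordinate-wise limit. Thus the argument splits into (i) coordinate-wise convergence $c^k_\la \to c_\la$, (ii) uniform boundedness $\sup_k \snorm{c^k}_2 < \infty$, and (iii) a density argument upgrading (i) and (ii) to weak convergence. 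Note that only \ref{ass:B2} together with the generic filter properties (nonexpansiveness and $\ph_\al(\ka,0)=0$) will be needed here; \ref{ass:B1} is reserved for the convergence analysis as $\al \to 0$.

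First I would dispatch coordinate-wise convergence. Since $\snorm{z^k - z}_2 \to 0$ implies $z^k_\la \to z_\la$ for each fixed $\la$, and since $\ph_\al(\ka_\la,\cdot)$ is nonexpansive (hence continuous) while $\Mo^\mpi_{\kao}$ acts coordinate-wise as division by $\ka_\la$, it follows that $c^k_\la = \ph_\al(\ka_\la, z^k_\la)/\ka_\la \to \ph_\al(\ka_\la, z_\la)/\ka_\la = c_\la$.

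The main obstacle is the uniform boundedness, since the factor $1/\ka_\la^2$ can be arbitrarily large for small quasi-singular values. Following the computation in the proof of Proposition~\ref{prop:dom-full}, I would split the index set, for each $k$, into the small indices $\{\la : \sabs{z^k_\la} \leq d\al/\ka_\la\}$ and the large indices $\{\la : \sabs{z^k_\la} > d\al/\ka_\la\}$. On the small indices, \ref{ass:B2} gives $\sabs{\ph_\al(\ka_\la, z^k_\la)}^2/\ka_\la^2 \leq (e^2/\al)\,\sabs{z^k_\la}^2$, so their contribution is bounded by $(e^2/\al)\snorm{z^k}_2^2$. On the large indices the defining inequality $\sabs{z^k_\la} > d\al/\ka_\la$ rearranges to $1/\ka_\la^2 < \sabs{z^k_\la}^2/(d\al)^2$; combined with the nonexpansiveness bound $\sabs{\ph_\al(\ka_\la, z^k_\la)} \leq \sabs{z^k_\la}$ (using $\ph_\al(\ka_\la,0)=0$) this yields $\sabs{\ph_\al(\ka_\la, z^k_\la)}^2/\ka_\la^2 \leq \sabs{z^k_\la}^4/(d\al)^2$, whose sum is controlled by $\snorm{z^k}_4^4/(d\al)^2 \leq \snorm{z^k}_2^4/(d\al)^2$ via the embedding $\ell^2(\La) \hookrightarrow \ell^4(\La)$. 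Because $(z^k)_k$ is convergent and hence bounded in $\ell^2(\La)$, say $\snorm{z^k}_2 \leq R$, both contributions are bounded uniformly by $e^2 R^2/\al + R^4/(d\al)^2$, giving $\sup_k \snorm{c^k}_2 < \infty$.

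Finally I would conclude: given boundedness and coordinate-wise convergence, for every finitely supported $w \in \ell^2(\La)$ one has $\inner{c^k}{w} \to \inner{c}{w}$, and since finitely supported sequences are dense while $(c^k)_k$ is bounded, an $\varepsilon/3$ argument extends this to all $w \in \ell^2(\La)$, which is precisely $c^k \rightharpoonup c$. I expect the boundedness estimate — in particular the $\ell^4$ trick that absorbs the blow-up of $1/\ka_\la^2$ on the large indices \emph{uniformly in $k$} — to be the crux, whereas coordinate-wise convergence and the density argument are routine.
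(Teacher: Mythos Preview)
Your proof is correct and takes a genuinely different, more elementary route than the paper's. Both arguments first establish uniform boundedness of $(c^k)_k$, but handle the ``large'' indices $\{\la : \sabs{z^k_\la} > d\al/\ka_\la\}$ differently: the paper observes that on such indices $\ka_\la \geq d\al/\sabs{z^k_\la} \geq d\al/(\snorm{z}_\infty + \sup_k \snorm{z^k-z}) \eqqcolon a > 0$, so $1/\ka_\la^2 \leq 1/a^2$ there, whereas you absorb $1/\ka_\la^2$ into an extra factor $\sabs{z^k_\la}^2$ and invoke $\ell^2 \hookrightarrow \ell^4$. Both work; yours is perhaps slightly cleaner as it avoids introducing the auxiliary constant $a$.

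The more substantive divergence is in identifying the weak limit. The paper extracts a weakly convergent subsequence and then pushes through the fixed-point characterization $x^k = \fix\bigl(\prox_{\ga \Rc_\al} \circ (\id - \ga \Mo_\kao^2 + \ga \Mo_\kao z^k)\bigr)$ from Lemma~\ref{lem:main}, using weak sequential continuity of $\prox_{\ga \Rc_\al}$ (Lemma~\ref{lem:prox}\ref{lem:prox6}) to show every weak subsequential limit equals $\Mo^\mpi_\kao \circ \Phi_{\al,\kao}(z)$. Your argument bypasses this machinery entirely: coordinate-wise convergence (immediate from continuity of each $\ph_\al(\ka_\la,\cdot)$) plus boundedness plus density of finitely supported sequences gives weak convergence directly. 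This is shorter and more transparent for the stability result in isolation; the paper's detour through the prox/fixed-point framework pays off mainly as a warm-up for the subsequent weak-convergence proof (Proposition~4.9), where the variational formulation is genuinely needed.
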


\begin{proof}
Fix the constant $d$ in Assumption \ref{ass:B} and  set $\La_k \coloneqq \{ \la \in \La \mid \sabs{z^k_\la} > d\al /\ka_\la  \}$ for $k \in \N$. Then, for all $\la \in \La_k$,
\[
\ka_{\la}\geq \frac{d \al}{\sabs{z^k_\la}} \geq\frac{d \al}{\sabs{z^k_\la}+\delta_k} \geq \frac{d \al}{\snorm{z}_\infty + \max_k \delta_k} \eqqcolon a > 0 \,.
\]
Since $a$ is independent of $k$, it follows  $\inf_{\la \in \La_k}\ka_\la \geq a$ for all $k \in \N$.
Therefore, defining $x^k \coloneqq  \Mo^\mpi_{\kao} \circ \Phi_{\al,\kao} (z^k)$,
\begin{align*}
\snorm{x^k}^2 &= \sum_{\ka_\la \geq a} \Bigl\lvert \frac{1}{\ka_\la}\ph_{\al}(\ka_\la,z^k_\la)\Bigr\rvert ^2 + \sum_{\ka_\la < a} \Bigl\lvert \frac{1}{\ka_\la}\ph_{\al}(\ka_\la,z^k_\la)\Bigr\rvert ^2 \\
& \leq \frac{1}{a^2} \sum_{\ka_\la \geq a} \sabs{z^k_\la}^2 + \sum_{\ka_\la < a} \frac{e^2}{\al} \sabs{z^k_\la}^2 \\
& \leq \Bigl( \frac{1}{a^2}+\frac{e^2}{\al}\Bigr)\cdot  (\snorm{z}+\max_k \delta_k)^2 \,.
\end{align*}
Hence $(x^k)_k$ is bounded and has a weakly convergent subsequence $(x^{n(\ell)})_\ell$.
By Lemma \ref{lem:main}, there exist $\qone_{\al,\la} \in \Ga_0(\R)$ with $\qone_{\al,\la}(0) = 0$, $\ph_{\al}(\ka_\la, \cdot) = \prox_{\qone_{\al,\la}}$, $\Rc_\al \coloneqq \bigoplus_{\la \in \La} \qone_{\al,\la}(\ka_\la (\cdot)) \in \Ga_0(\ell^2(\La))$ positive and $(\prox_{\ga \qone_{\al,\la}(\ka_\la (\cdot))}(x_\la))_{\la \in \La} = \prox_{\ga \Rc_\al}((x_\la)_{\la \in \La})$. Thus,
\begin{multline*}
 x^k  = \fix \bigl(\prox_{\ga \Rc_\al} \circ ( \id - \ga \nabla \snorm{\Mo_{\kao}(\cdot) - z^k}^2 /2 ) \bigr) \\
=  \bigl( \fix \bigl( \prox_{\ga \qone_{\al,\la}(\ka_\la (\cdot))} ((1 - \ga \ka_\la^2)(\cdot) + \ga \ka_\la z_\la^k) \bigr)\bigr)_{\la} \,.
\end{multline*}
Assume $x^{n(\ell)} \rightharpoonup x^\mpi$ and $y \in \ell^2(\La)$. With Lemma~\ref{lem:prox}\ref{lem:prox6},
\begin{align*}
& \sabs{\sinner{\prox_{\ga \Rc_\al} \circ \left(\id - \ga \Mo_{\kao}^2 + \ga \Mo_{\kao} z \right)(x^\mpi) - x^\mpi}{y}} \\
& \leq \sabs{\sinner{\prox_{\ga \Rc_\al} \circ (\id - \ga \Mo_{\kao}^2 + \ga \Mo_{\kao} z )(x^\mpi) - \prox_{\ga \Rc_\al} \circ (\id - \ga \Mo_{\kao}^2 + \ga \Mo_{\kao} z^{n(\ell)} )(x^\mpi)}{y}} \\
&+ \sabs{\sinner{\prox_{\ga \Rc_\al} \circ (\id - \ga \Mo_{\kao}^2 + \ga \Mo_{\kao} z^{n(\ell)})(x^\mpi) - \prox_{\ga \Rc_\al} \circ (\id - \ga \Mo_{\kao}^2 + \ga \Mo_{\kao} z^{n(\ell)} )(x^{n(\ell)})}{y}} \\
&+ \sabs{\sinner{x^{n(\ell)} - x^\mpi}{y}}  \to 0.
\end{align*}
Thus, $x^\mpi = \Mo^\mpi_{\kao} \circ \, \Phi_{\al,\kao}(z)$.
\end{proof}

\begin{proposition}[Weak Convergence]\label{prop:stabC}
Let $(\ph_{\al})_{\al > 0}$ be non-linear regularizing filter such that  Assumption \ref{ass:B} is satisfied.
Let $z\in \ran(\Mo_{\kao}) \cap \ran(\Phi_{\tilde \al, \kao})$ for some $\tilde{\al}> 0$.
Let $z^k  \in \ell^2(\La)$ such that $\snorm{z-z^k}_2 \leq \delta_k$ and let $\delta_k, \al_k \to 0$ such that $\al_k \gtrsim \delta_k^2$. Then $\Mo^\mpi_{\kao} \circ \Phi_{\al_k,\kao} (z^k) \rightharpoonup \Mo^\mpi_{\kao} z$.
\end{proposition}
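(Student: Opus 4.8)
The plan is to prove weak convergence of $c^k \coloneqq \Mo^\mpi_\kao \circ \Phi_{\al_k,\kao}(z^k)$ to $c^\mpi \coloneqq \Mo^\mpi_\kao z$ by establishing the two ingredients that together characterise weak convergence in $\ell^2(\La)$: coordinatewise convergence and uniform norm boundedness. Throughout I write $c^k_\la = \ka_\la^{-1}\ph_{\al_k}(\ka_\la,z^k_\la)$ and $c^\mpi_\la = \ka_\la^{-1}z_\la$ (recall $z\in\ran(\Mo_\kao)$, so $c^\mpi\in\ell^2(\La)$ and $z_\la=\ka_\la c^\mpi_\la$), set $\eta^k \coloneqq z^k-z$ so $\snorm{\eta^k}_2\le\delta_k$, and use the source condition $z\in\ran(\Phi_{\tilde\al,\kao})$ to fix $w\in\ell^2(\La)$ with $z_\la=\ph_{\tilde\al}(\ka_\la,w_\la)$. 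Coordinatewise convergence is immediate: by \ref{def:filter2}, $\sabs{\ph_{\al_k}(\ka_\la,z^k_\la)-\ph_{\al_k}(\ka_\la,z_\la)}\le\sabs{z^k_\la-z_\la}\to0$, while $\ph_{\al_k}(\ka_\la,z_\la)\to z_\la$ by \ref{def:filter4} and $\al_k\to0$; dividing by $\ka_\la$ gives $c^k_\la\to c^\mpi_\la$.

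The crux is to show $\sup_k\snorm{c^k}_2<\infty$. I split $\La$ by the size of $\ka_\la$ relative to $\sqrt{\al_k}$: let $J_k\coloneqq\{\la:\ka_\la\ge\sqrt{\al_k}/e\}$ and $I_k\coloneqq\{\la:\ka_\la<\sqrt{\al_k}/e\}$, with $d,e$ as in \ref{ass:B2}. On $J_k$ I use only nonexpansiveness, $\sabs{c^k_\la}\le\ka_\la^{-1}\sabs{z^k_\la}$, together with $\ka_\la^{-2}\le e^2/\al_k$; writing $z^k=z+\eta^k$, this gives $\sum_{J_k}\sabs{c^k_\la}^2\le 2\snorm{c^\mpi}_2^2+2(e^2/\al_k)\snorm{\eta^k}_2^2\le 2\snorm{c^\mpi}_2^2+2e^2(\delta_k^2/\al_k)$, which is bounded precisely because $\al_k\gtrsim\delta_k^2$. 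On $I_k$ I distinguish the damped indices $\sabs{z^k_\la}\le d\al_k/\ka_\la$, where \ref{ass:B2} yields $\sabs{c^k_\la}\le(e/\sqrt{\al_k})\sabs{z^k_\la}$, from the kept indices $\sabs{z^k_\la}>d\al_k/\ka_\la$.

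The decisive observation that closes the $I_k$ estimate is that the source condition forces the \emph{clean} coefficients to be small wherever $\ka_\la$ is small. Indeed, for $k$ large enough that $de\tilde\al/\sqrt{\al_k}>\snorm{w}_\infty$, every $\la\in I_k$ satisfies $\sabs{w_\la}\le d\tilde\al/\ka_\la$ (only finitely many indices can violate this, as $w\in\ell^2(\La)$), so \ref{ass:B2} applied at $\tilde\al$ gives $\sabs{z_\la}=\sabs{\ph_{\tilde\al}(\ka_\la,w_\la)}\le(e\ka_\la/\sqrt{\tilde\al})\sabs{w_\la}$. On the damped part this bound together with $\ka_\la^2<\al_k/e^2$ gives $\al_k^{-1}\sabs{z_\la}^2\le\tilde\al^{-1}\sabs{w_\la}^2$, so the signal contribution sums to $\le(2e^2/\tilde\al)\snorm{w}_2^2$ while the noise contribution is again of order $\delta_k^2/\al_k$. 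On the kept part, $\sabs{z_\la}\lesssim\sqrt{\al_k}$ forces $\sabs{z^k_\la}\lesssim\sqrt{\al_k}$ (using $\delta_k\lesssim\sqrt{\al_k}$), hence $\ka_\la>d\al_k/\sabs{z^k_\la}\gtrsim\sqrt{\al_k}$; after splitting $z^k=z+\eta^k$ the signal part is dominated by $\snorm{c^\mpi}_2^2$, and this \emph{square-root} lower bound on the retained $\ka_\la$ reduces the amplified noise $\sum_{\mathrm{kept}}\ka_\la^{-2}\sabs{\eta^k_\la}^2$ to order $\delta_k^2/\al_k$, again bounded. Collecting the four pieces gives a bound independent of $k$, the finitely many small $k$ being harmless since each $c^k\in\ell^2(\La)$ by Proposition~\ref{prop:dom-full}. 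Coordinatewise convergence plus this uniform bound yields $c^k\rightharpoonup c^\mpi$, as claimed; note that \ref{ass:B1} is not needed for this route.

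The main obstacle is exactly the boundedness on the kept indices at small $\ka_\la$: the naive lower bound $\ka_\la\gtrsim\al_k$ (valid merely because kept coefficients have $\ka_\la\sabs{z^k_\la}>d\al_k$ and $\sabs{z^k_\la}$ is bounded) produces an amplified-noise term of order $\delta_k^2/\al_k^2$, which diverges under the mild coupling $\al_k\gtrsim\delta_k^2$. Overcoming this is where the source condition is indispensable: it is precisely the decay $\sabs{z_\la}\lesssim\sqrt{\al_k}$ of the true coefficients at small quasi-singular values that upgrades the effective lower bound from $\al_k$ to $\sqrt{\al_k}$, matching the admissible scaling $\delta_k/\sqrt{\al_k}=O(1)$ and turning the otherwise divergent term into a bounded one.
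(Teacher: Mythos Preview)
Your argument is correct and takes a genuinely different route from the paper's proof. The paper first establishes boundedness of $(c^k)_k$ by a three-way split (the set $\La_k=\{\la:\sabs{z^k_\la}\ge d\al_k/\ka_\la\}$, then $\ka_\la^2>\al_k$ versus $\ka_\la^2\le\al_k$ on the complement), and then identifies the weak limit via the variational characterisation of Lemma~\ref{lem:main}: it uses \ref{ass:B1} together with Lemma~\ref{lem:inv-conv} and monotone convergence to prove $\Rc_{\al_k}(\Mo^\mpi_\kao z)\to 0$, inserts $\Mo^\mpi_\kao z$ as a competitor in the minimisation, and deduces $\snorm{\Mo_\kao c^k-z^k}\to 0$. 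The source condition $z\in\ran(\Phi_{\tilde\al,\kao})$ enters only to guarantee $\Mo^\mpi_\kao z\in\dom(\Rc_\al)$ for small $\al$ (via Lemma~\ref{lem:domR}). Your approach instead exploits the diagonal structure directly: coordinatewise convergence is read off from \ref{def:filter2} and \ref{def:filter4}, and the source condition is put to work quantitatively inside the boundedness estimate, yielding the crucial upgrade $\ka_\la\gtrsim\sqrt{\al_k}$ on the kept small-$\ka_\la$ indices. The payoff is that you never touch the $\kao$-regulariser $\Rc_\al$ or the variational machinery, and in particular you do not use \ref{ass:B1} at all; this is a real simplification and shows that the monotonicity assumption is dispensable for weak convergence. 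Conversely, the paper's route has the advantage of displaying the link to Tikhonov-type analysis and makes transparent why $\Rc_{\al_k}(c^k)\to 0$, which may be useful for deriving rates or norm convergence later.
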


\begin{proof}
Fix the constants $d$ and $e$ of Assumption \ref{ass:B} and define $\La_k \coloneqq \{\la \in \La \mid \abs{z_\la^k} \geq d \al_k/(\ka_\la) \}$ for $n \in \N$.
Then, $\snorm{z}_\infty + \max_k \delta_k \geq \sabs{z_\la}+\delta_k \geq \sabs{z^k_\la} \geq d \al_k/\ka_{\la} \geq C \delta_k/ \ka_\la$ for all  $\la \in \La_k$ and some constant $C > 0$.
Thus $(\delta_k / \inf_{\la \in \La_k} \ka_\la)_{k \in \N}$ is bounded, as otherwise one could create a contradiction to $z \in \ell^2(\La)$.
Therefore, for some constant $D > 0$,

\begin{align*}
\snorm{x^k}^2 &= \sum_{\la\in \La_k} \Bigl\lvert \frac{1}{\ka_\la}\ph_{\al_k}(\ka_\la,z^k_\la)\Bigr\rvert^2 + \sum_{\underset{\ka_\la^2 > \al_k}{\la \notin \La_k}} \Bigl\lvert\frac{1}{\ka_\la}\ph_{\al_k}(\ka_\la,z^k_\la)\Bigr\rvert^2 + \sum_{\underset{\ka_\la^2 \leq\al_k}{\la \notin \La_k}} \Bigl\lvert\frac{1}{\ka_\la}\ph_{\al_k}(\ka_\la,z^k_\la)\Bigr\rvert^2 \\
& \leq \sum_{\la\in \La_k} \left(\frac{\sabs{z^k_\la}}{\ka_\la}\right)^2 + \sum_{\underset{\ka_\la^2 > \al_k}{\la \notin \La_k}} \left(\frac{\abs{z^k_\la}}{\ka_\la}\right)^2 + e \sum_{\underset{\ka_\la^2 \leq \al_k}{\la \notin \La_k}} \left(\frac{\abs{z^k_\la}}{\sqrt{\al_k}}\right)^2 \\
& \leq \sum_{\la\in \La_k} \left( \frac{\sabs{z_\la} + \abs{z_\la^k-z_\la}}{\ka_\la}\right)^2 + \sum_{\underset{\ka_\la^2 > \al_k}{\la \notin \La_k}} \left(\frac{\abs{z_\la}}{\ka_\la} + \frac{\abs{z_\la^k-z_\la}}{\sqrt{\al_k}}\right)^2 + e \sum_{\underset{\ka_\la^2 \leq \al_k}{\la \notin \La_k}} \left(\frac{\abs{z_\la}}{\ka_\la} + \frac{\abs{z_\la^k-z_\la}}{\sqrt{\al_k}}\right)^2 \\
& \leq \left(\norm{\Mo_{\kao}^+ z} + \frac{\delta_k}{\inf_{\la \in \La_k} \ka_\la} \right)^2 + \left(\norm{\Mo_{\kao}^+ z} +\frac{\delta_k}{\sqrt{\al_k}}  \right)^2  + e \left(\norm{\Mo_{\kao}^+ z} + \frac{\delta_k}{\sqrt{\al_k}}  \right)^2\\
& \leq D.
\end{align*}
Thus $(x^k)_k$ is bounded and has a weakly convergent subsequence.

Now $\la \in \La$. By Lemma \ref{lem:inv-conv}, we have $\ph_{\al}(\ka_\la,\cdot)^{-1}(y) = y + \partial \qone_{\al,\la}(y) \rightarrow \{y\}$ and therefore $\partial \qone_{\al,\la}(y) \rightarrow \{0\}$ as $\al \to 0$ for all $y \in \R$.
Let $y \in \R$ be arbitrary and for every $\al > 0$ choose $z_\al \in \partial \qone_{\al,\la}(y)$.
The definition of the sub-differential implies that $\qone_{\al,\la}(y) \leq z_\al y$.
Since $\qone_{\al,\la}$ is positive and $z_\al \to 0$ as $\al \to 0$, we have that $\qone_{\al,\la}(y) \to 0$.
Consequently, for a fixed $\la \in \La$, $\qone_{\al, \la}$ converge point-wise to the zero function.

Further, by the point-wise monotonicity of $\ph_\al$ as $\al \rightarrow 0$ and the fact that $\qone_{\al,\la}(0) =0$ for all $\al > 0$, we get that $\qone_{\al,\la}(y)$ is monotonically decreasing as $\al \rightarrow 0$.
Then the theorem of monotone convergence implies $\Rc_\al(\Mo_{\kao}^\mpi z) = \sum_{\la \in \La} \qone_{\al,\la}(z_\la) \to 0$ as $\al \to 0$.
In particular, $\Mo^\mpi_{\kao} z \in \ran(\Mo^+_{\kao}  \circ \Phi_{\al,\kao}) \subseteq \dom(\Rc_{\al})$ for all $ \al \in (0, \tilde{\al})$.

Let $(x^{n(\ell)})_\ell$ be a weakly convergent subsequence with weak limit $x^\mpi$.
By Lemma \ref{lem:main}, we have $x^{n(\ell)} \in \argmin_x \norm{\Mo_{\kao} x - z^{n(\ell)}}^2/2 + \Rc_{\al_{n(\ell)}}(x)$ and, as $\ell \to \infty$,
\begin{multline*}
\frac{1}{2} \bigl\lVert \Mo_{\kao} x^{n(\ell)} - z^{n(\ell)} \bigr\rVert^2 + \Rc_{\al_{n(\ell)}}(x^{n(\ell)})
 \leq \frac{1}{2}\bigl\lVert z - z^{n(\ell)}\bigr\rVert^2 + \Rc_{\al_{n(\ell)}}(\Mo^\mpi_{\kao} z) \\
 \leq \delta_{n(\ell)}^2/2 + \Rc_{\al_{n(\ell)}}(\Mo^\mpi_{\kao} z)
\to 0 \,.
\end{multline*}
Thus  $\snorm{\Mo_{\kao} x^{n(\ell)} - z^{n(\ell)}} \to 0$ and since $z^{n(\ell)} \to z$ and $\Mo_{\kao}$ is linear and bounded, we conclude $\Mo_{\kao} x^{n(\ell)} \rightharpoonup \Mo_{\kao} x^\mpi$.
Therefore, $z= \Mo_{\kao} x^\mpi$ and $x^\mpi = \Mo_{\kao}^\mpi z$.
Because this holds for every weakly convergent subsequence, we conclude $x^k \rightharpoonup \Mo_{\kao}^\mpi z$.
\end{proof}

\subsection{Main theorems}

Utilizing the results obtained for $\Mo^\mpi_{\kao} \circ \Phi_{\al_k,\kao}$ under Assumption~\ref{ass:B}, we can deduce stability and weak convergence for the non-linear filter-based reconstruction method $(\Bo_\al)_{\al >0}$. Notably, these results persist under the stricter Assumption \ref{ass:A}.

\begin{theorem}[Stability] \label{thm:stab}
Let $(\ph_{\al})_{\al > 0}$ be a non-linear regularizing filter, $\al > 0$ and $y^\delta, y^k \in \Y$ be with  $y^k \to y^\delta$. If Assumption \ref{ass:B} holds, then $\Bo_\al (y^k) \rightharpoonup \Bo_\al (y^\delta)$.
\end{theorem}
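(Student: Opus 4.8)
The plan is to peel off the two continuous frame operators in the factorization
$\Bo_\al = \To_{\bar\uo} \circ \Mo^\mpi_\kao \circ \Phi_{\al,\kao} \circ \To_{\vo}^*$
from Definition~\ref{def:nl-dfd} and reduce the claim to the stability statement for the diagonal map $\Mo^\mpi_\kao \circ \Phi_{\al,\kao}$ already established in Proposition~\ref{prop:stabC}.

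First I would set $z^k \coloneqq \To_{\vo}^*(y^k)$ and $z \coloneqq \To_{\vo}^*(y^\delta)$. Since the analysis operator $\To_{\vo}^*$ is bounded and linear and $y^k \to y^\delta$ strongly in $\Y$, this gives $z^k \to z$ strongly in $\ell^2(\La)$. Because Assumption~\ref{ass:B} holds, Proposition~\ref{prop:dom-full} guarantees $\dom(\Mo^\mpi_\kao \circ \Phi_{\al,\kao}) = \ell^2(\La)$, so the elements $\Mo^\mpi_\kao \circ \Phi_{\al,\kao}(z^k)$ and $\Mo^\mpi_\kao \circ \Phi_{\al,\kao}(z)$ are all well defined and there is no domain obstruction.

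Next, applying the stability result for the diagonal map (Proposition~\ref{prop:stabC}) directly to the strongly convergent sequence $z^k \to z$ yields the weak convergence $\Mo^\mpi_\kao \circ \Phi_{\al,\kao}(z^k) \rightharpoonup \Mo^\mpi_\kao \circ \Phi_{\al,\kao}(z)$ in $\ell^2(\La)$. Finally, since the synthesis operator $\To_{\bar\uo}$ is bounded and linear, it is weak-to-weak sequentially continuous; applying it to the preceding weak convergence preserves it and gives exactly $\Bo_\al(y^k) \rightharpoonup \Bo_\al(y^\delta)$, as claimed.

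In this argument there is essentially no hard step: the theorem is a corollary of Proposition~\ref{prop:stabC} once one records that $\To_{\vo}^*$ transports strong convergence and $\To_{\bar\uo}$ transports weak convergence, both being bounded and linear. The only point worth stating explicitly is that weak -- and not strong -- convergence is what survives at the $\ell^2$ level, as dictated by Proposition~\ref{prop:stabC}; obtaining strong stability would instead require the additional structure indicated in the remark following Proposition~\ref{prop:caseA} rather than the general Assumption~\ref{ass:B}.
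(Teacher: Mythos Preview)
Your proposal is correct and follows essentially the same approach as the paper: the paper's proof simply notes that stability follows from the representation $\Bo_\al = \To_{\bar \uo} \circ  \Mo^\mpi_\kao \circ \Phi_{\al, \kao} \circ \To_{\vo}^*$ together with Proposition~\ref{prop:stabC} and the continuity of $\To_{\bar \uo}$ and $\To_{\vo}^*$. Your write-up is in fact slightly more explicit about which operator carries strong convergence and which carries weak convergence; the only minor inaccuracy is the closing aside, since strong stability in the paper is obtained via the PnP reduction (Proposition~\ref{prop:caseB}) rather than the remark after Proposition~\ref{prop:caseA}.
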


\begin{proof}
The stability, in fact, follows from the representation $\Bo_\al = \To_{\bar \uo} \circ  \Mo^\mpi_\kao \circ \Phi_{\al, \kao}
\circ \To_{\vo}^*$ and the stability results in Propositions \ref{prop:stabC} combined  with the continuity of $\To_{\bar \uo}$ and $\To_{\vo^*}$.
\end{proof}

\begin{theorem}[Weak Convergence] \label{thm:conv}
Let $(\ph_{\al})_{\al > 0}$ be a non-linear regularizing filter satisfying Assumption \ref{ass:B}, $(\delta_k)_k, (\al_k)_k$  null sequences with $\delta_k^2/\al_k \to 0$ and $y \in \ran(\Ao)$.
Suppose $\To_{\vo}^*(y) \in \ran(\Phi_{\tilde \al,\kao})$ for some $\tilde{\al}> 0$ and let $(y^k)_k \in \Y^\N$ with $\snorm{y^k -y} \leq \delta_k$.
Then, $\Bo_{\al_k} (y^k)   \rightharpoonup \Ao^\mpi y$ as $k \to \infty$.
\end{theorem}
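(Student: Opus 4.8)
The plan is to reduce the statement about the reconstruction operators $\Bo_{\al_k}$ on $\Y$ to the already-established statement about the diagonal operators $\Mo^\mpi_{\kao} \circ \Phi_{\al_k,\kao}$ on $\ell^2(\La)$, exploiting the factorization from Definition~\ref{def:nl-dfd}. Recall that $\Bo_{\al_k} = \To_{\bar \uo} \circ \Mo^\mpi_{\kao} \circ \Phi_{\al_k,\kao} \circ \To_{\vo}^*$ and, by \eqref{eq:dfd-inv}, $\Ao^\mpi y = \To_{\bar \uo} \circ \Mo^\mpi_{\kao} \circ \To_{\vo}^*(y)$. Setting $z \coloneqq \To_{\vo}^*(y)$ and $z^k \coloneqq \To_{\vo}^*(y^k)$, the whole theorem will follow once I show $\Mo^\mpi_{\kao} \circ \Phi_{\al_k,\kao}(z^k) \rightharpoonup \Mo^\mpi_{\kao}(z)$ and then apply the continuous (hence weak-weak continuous) linear operator $\To_{\bar \uo}$ to pass the weak limit through.

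First I would translate the hypotheses into the coefficient picture. Since $\To_{\vo}^*$ is a bounded linear operator, $\snorm{y^k - y} \le \delta_k$ gives $\snorm{z^k - z}_2 \le \snorm{\To_{\vo}^*}\,\delta_k \eqqcolon \tilde\delta_k$, and $\tilde\delta_k, \al_k$ are still null sequences with $\tilde\delta_k^2/\al_k \to 0$, so in particular $\al_k \gtrsim \tilde\delta_k^2$. The frame assumption $\To_{\vo}^*(y) \in \ran(\Phi_{\tilde\al,\kao})$ is exactly $z \in \ran(\Phi_{\tilde\al,\kao})$, and $y \in \ran(\Ao)$ together with \eqref{eq:dfd-inv} ensures $z \in \ran(\Mo_{\kao})$ (indeed $z = \Mo_{\kao}(\Mo^\mpi_{\kao} z)$ on $\overline{\ran\Ao}$). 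Thus $z \in \ran(\Mo_{\kao}) \cap \ran(\Phi_{\tilde\al,\kao})$, which is precisely the hypothesis of the Weak Convergence Proposition~\ref{prop:stabC} in the direct analysis.

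With these identifications in place, Proposition~\ref{prop:stabC} (the weak-convergence result under Assumption~\ref{ass:B}) yields $\Mo^\mpi_{\kao} \circ \Phi_{\al_k,\kao}(z^k) \rightharpoonup \Mo^\mpi_{\kao}(z)$ as $k \to \infty$. Applying $\To_{\bar \uo}$, and using that a bounded linear operator maps weakly convergent sequences to weakly convergent sequences with the limit transported through, I obtain
\begin{equation*}
\Bo_{\al_k}(y^k) = \To_{\bar \uo}\bigl(\Mo^\mpi_{\kao} \circ \Phi_{\al_k,\kao}(z^k)\bigr) \rightharpoonup \To_{\bar \uo}\bigl(\Mo^\mpi_{\kao}(z)\bigr) = \Ao^\mpi y,
\end{equation*}
which is the claim. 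The argument is essentially a bookkeeping reduction, so the only real point requiring care is verifying that the frame/range conditions translate correctly — in particular confirming $z \in \ran(\Mo_{\kao})$ from $y \in \ran(\Ao)$, and checking that the perturbed parameter relation $\al_k \gtrsim \tilde\delta_k^2$ needed by Proposition~\ref{prop:stabC} follows from the stated $\delta_k^2/\al_k \to 0$. Neither is genuinely difficult, so I expect no substantive obstacle beyond ensuring the hypotheses of Proposition~\ref{prop:stabC} are met verbatim.
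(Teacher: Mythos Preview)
Your proposal is correct and takes essentially the same approach as the paper: translate the hypotheses to the coefficient space via $\To_{\vo}^*$, invoke Proposition~\ref{prop:stabC} to obtain weak convergence of $\Mo^\mpi_{\kao}\circ\Phi_{\al_k,\kao}(z^k)$, and then push the weak limit through the bounded linear synthesis operator $\To_{\bar\uo}$. The only cosmetic difference is that the paper writes out the weak-weak continuity of $\To_{\bar\uo}$ explicitly via inner products and identifies $z = \Mo_{\kao}\To_{\uo}^* x^\mpi$ directly from the quasi-singular relations rather than appealing to~\eqref{eq:dfd-inv}.
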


\begin{proof}
Write  $x^\mpi = \Ao^\mpi y$ and $c^k = \Mo_\kao^\mpi \circ \Phi_{\al_k, \kao} \circ \To_\vo^* (y^k)$. By the definition  of the  DFD we have
\begin{equation*}
    \snorm{\To_\vo^* y_k-\Mo_\kao \To_\uo^* x^\mpi} = \norm{\To_\vo^* y_k - \To_\vo^* \Ao x^\mpi} \leq \norm{\To_\vo}\delta_k \,.
\end{equation*}
Application of the convergence results of Propositions \ref{prop:stabC}  shows $c^k \rightharpoonup \Mo_\kao^\mpi \Mo_\kao \To_\uo^*x^\mpi = \To_\uo^*x^\mpi$.
Thus for any $z \in \X$, we have
\begin{equation*}
 \sinner{z}{\To_{\bar\uo} c^k - x^\mpi } =
 \sinner{s}{\To_{\bar\uo} c^k - \To_{\bar\uo}\To_\uo^*x^\mpi } =
 \sinner{\To^*_{\bar\uo} z}{c^k - \To_\uo^*x^\mpi } \to 0 \,.
\end{equation*}
Thus  $x^k = \To_{\bar\uo} c^k \rightharpoonup x^\mpi$, which  concludes the proof.
\end{proof}

\subsection{Numerical rates}\label{sec:num}
In this section, we present a brief example of numerical rates by comparing the filters of Example \ref{ex:caseA} and \ref{ex:caseB} with the soft thresholding filter. The underlying operator is the 2D Radon transform, and we utilize the wavelet vaguelette decomposition (DFD) with the Haar wavelet \cite{donoho1995nonlinear}. Figure \ref{fig:rates} plots, on the left, the filter function with $\kappa_\lambda=1/4$ and $\alpha=1/10$ for the regularizing filters in Example \ref{ex:caseA} and \ref{ex:caseB}, as well as the soft thresholding filter. On the right, the $\ell^2$-error of the reconstruction compared to the ground truth is plotted with respect to various percentages of added noise. A linear parameter choice, $\alpha = C \delta$ for some constant $C > 0$, is employed.

\begin{figure}[htb!]\label{fig:rates}
\centering
\begin{tikzpicture}[x=2cm, y=2cm, font=\small]
	\draw (0,0) node {\includegraphics [scale=0.56]{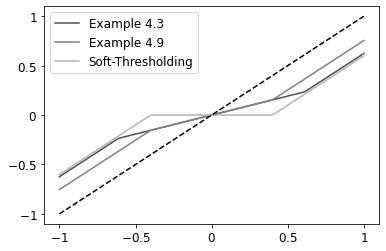}};
	\draw (4,0) node {\includegraphics [scale=0.56]{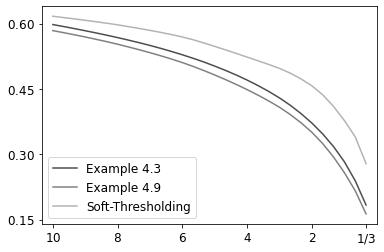}};
\end{tikzpicture}
\caption{On the left, filter functions for Example \ref{ex:caseA} and \ref{ex:caseB} (with $\kappa_\lambda=1/4$ and $\alpha=1/10$), as well as the soft thresholding filter, are displayed. On the right, the plot illustrates the $\ell^2$-error of reconstructions under varying percentages of added noise. A linear parameter choice is applied.}
\end{figure}

\section{Connection to PnP regularization}
\label{sec:pnp}

PnP regularization and variational regularization are closely linked. Moreover, PnP regularization operates with an implicitly provided regularization parameter, similar to the direct analysis. In this section, our goal is to clarify the connections between the PnP framework and non-linear filtered DFD. We introduce assumptions about the regularizing filter that enable us to align our method with PnP regularization, similar to variational regularization, thereby achieving strong stability under slightly stronger conditions. Furthermore, we employ direct analysis strategies to establish convergence results for a diagonalized version of PnP regularization under more relaxed assumptions, as needed in previous works.

The concept in PnP regularization is to find a fixed point of the operator
\begin{equation}
	\To_{\al,y} \coloneqq \Do_\al \circ \bigl(\id- \ga \nabla \norm{\Ao (\cdot) - y}^2/2) \bigr)  \colon \X \to \X \,,
\end{equation}
where $(\Do_\al)_{\al > 0}$ is a suitable family of regularization operators (or denoisers).
The PnP framework has been applied successfully  various fields  including image restoration \cite{Ca16, zhang2021plug} and inverse imaging  problems \cite{wei2020tuning, Wo19}.

If $\Do_\al$ is a proximity operator of a proper, convex and lower semi-continuous functional $\ga\Rc_\al$, then, by Lemma~\ref{lem:prox}~\ref{lem:prox3} we have $\fix(\To_{\al,y})= \argmin_{x \in \X} \norm{\Ao x-y}^2/2 + \Rc_\al$.

\begin{definition}[Family  of denoisers] \label{def:denoisers}
We call $(\Do_\al)_{\al >0}$   admissible  family  of denoisers if the following hold:
\begin{enumerate} [label=(D\arabic*), leftmargin =2.5em]
\item \label{def:D1} $\forall \al> 0 \colon \Lip(\Do_\al) < 1$.

\item\label{def:D2} $ \forall x \in \E  \coloneqq \bigcup_{\al >0} \ran(\Do_\al) \colon \Do_\al(x) \rightarrow x$ as $\al \to 0$.

\item  \label{def:D3}  $\forall B \subseteq \E$ bounded $ \colon \forall z \in \X \colon \sup_{x \in B} \inner{\Do_\al(x) -x}{z} \to 0$.
\item \label{def:D4}  $\forall x \in \E \colon \exists M_x < \infty \colon \forall \al >0 \colon  \norm{x-\Do_\al(x)} /(1- \Lip(\Do_\al)) \leq M_x$.
\end{enumerate}
\end{definition}

In \cite{ebner2022plug}, the following results for PnP as a regularization method, notably the first of its kind, have been derived.

\begin{lemma}[PnP regularization] \label{lem:PnP}
Suppose $(\Do_\al)_{\al>0}$ is an admissible  family  of denoisers, let  $ \ga \in (0, 2/\norm{\Ao}^2)$ and suppose $y, y^k \in \Y$ with $y^k \to y$ as $k \to \infty$. Then we have:
\begin{itemize}
\item Existence: $\To_{\al,y}$ has exactly one fixed point.
\item Stability: $\fix(\To_{\al,y^k}) \to \fix(\To_{\al,y})$  as $k \to \infty$.
\item Convergence: Assume $y = \Ao x$ for $x \in \E$ and suppose $\snorm{y^k-y} \leq \delta_k$  where $\delta_k \to 0$.
Consider $\al_k \to 0$ such that $(1-\Lip(\Do_{\al_k}))/\Lip(\Do_{\al_k}) \gtrsim \delta_k$ and take $x^k = \fix( \To_{\al_k,y^k})$. There exists a subsequence $(x^{k(\ell)})_{l \in \N}$ and a solution $x^\mpi$ of $\Ao x = y$ such that  $x^{k(\ell)} \rightharpoonup x^\mpi$.
If the solution is unique, then $x^k \rightharpoonup x^\mpi$.
\end{itemize}
\end{lemma}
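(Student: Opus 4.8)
The plan is to exploit that, for $\ga \in (0, 2/\norm{\Ao}^2)$, the gradient step $G_y \coloneqq \id - \ga\Ao^{*}(\Ao(\cdot) - y)$ is nonexpansive (indeed, by the Baillon--Haddad theorem $\nabla\norm{\Ao(\cdot)-y}^2/2 = \Ao^{*}(\Ao(\cdot)-y)$ is $1/\norm{\Ao}^2$-cocoercive), so that $\To_{\al,y} = \Do_\al\circ G_y$ is a $\Lip(\Do_\al)$-contraction by \ref{def:D1}. \emph{Existence} of a unique fixed point is then immediate from the Banach fixed point theorem. For \emph{stability} with $\al$ fixed, write $L \coloneqq \Lip(\Do_\al) < 1$, $x^k \coloneqq \fix(\To_{\al,y^k})$, $x \coloneqq \fix(\To_{\al,y})$, and split $\norm{x^k - x} = \norm{\To_{\al,y^k}(x^k) - \To_{\al,y}(x)} \le L\norm{x^k - x} + \norm{\Do_\al(G_{y^k}(x)) - \Do_\al(G_{y}(x))}$; since $G_{y^k}(x) - G_y(x) = \ga\Ao^{*}(y^k - y)$, the last term is $\le L\ga\norm{\Ao}\snorm{y^k - y} \to 0$, and rearranging yields $\norm{x^k - x} \le \tfrac{L\ga\norm{\Ao}}{1-L}\snorm{y^k - y}\to 0$, i.e. \emph{strong} stability from \ref{def:D1} alone.

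For convergence, abbreviate $L_k \coloneqq \Lip(\Do_{\al_k})$, $x^k \coloneqq \fix(\To_{\al_k,y^k})$, and $w^k \coloneqq G_{y^k}(x^k)$, so that $x^k = \Do_{\al_k}(w^k)$. The first step is \textbf{boundedness} of $(x^k)_k$. The same rearrangement as above, using $\Ao x = y$ so that $G_{y^k}(x) = x + \ga\Ao^{*}(y^k - y)$, gives $(1-L_k)\norm{x^k - x} \le \norm{\To_{\al_k,y^k}(x) - x} \le L_k\ga\norm{\Ao}\delta_k + \norm{\Do_{\al_k}(x) - x}$, hence
\[
\norm{x^k - x} \le \frac{L_k}{1-L_k}\,\ga\norm{\Ao}\delta_k + \frac{\norm{\Do_{\al_k}(x) - x}}{1-L_k}\,.
\]
The first summand stays bounded because the parameter rule $(1-L_k)/L_k \gtrsim \delta_k$ forces $\tfrac{L_k}{1-L_k}\delta_k$ to be bounded; the second is bounded by $M_x$ via \ref{def:D4} (here $x \in \E$). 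Thus $(x^k)_k$ is bounded and admits a subsequence $x^{k(\ell)} \rightharpoonup x^\mpi$.

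The \textbf{main obstacle} is identifying this weak limit as a solution, i.e. controlling the residual in the absence of any minimality to exploit. Here I would use the quantitative cocoercivity bound $\norm{G_{y^k}(x^k) - G_{y^k}(x)}^2 \le \norm{x^k - x}^2 - \ga(2/\norm{\Ao}^2 - \ga)\norm{\Ao^{*}\Ao(x^k - x)}^2$ together with $\norm{x^k - x} \le \norm{x^k - \To_{\al_k,y^k}(x)} + \norm{\To_{\al_k,y^k}(x) - x} \le L_k\norm{w^k - G_{y^k}(x)} + \eta_k$, where $\eta_k \coloneqq L_k\ga\norm{\Ao}\delta_k + \norm{\Do_{\al_k}(x) - x} \to 0$ by $\delta_k \to 0$ and \ref{def:D2} (with $x \in \E$). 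Substituting the second estimate into the first, discarding the nonpositive term $\norm{x^k-x}^2(1 - L_k^{-2})$, and using the boundedness of $\norm{x^k - x}$ yields
\[
\ga\,(2/\norm{\Ao}^2 - \ga)\,\norm{\Ao^{*}\Ao(x^k - x)}^2 \le \frac{2\,\norm{x^k-x}\,\eta_k}{L_k^2} \longrightarrow 0\,,
\]
where the right-hand side vanishes by \ref{def:D2}, \ref{def:D4} and the parameter rule (using $L_k \to 1$ as $\al_k \to 0$). Hence $\Ao^{*}\Ao(x^k - x) \to 0$, and since $\Ao^{*}(y - y^k) \to 0$ we obtain $\Ao^{*}(\Ao x^k - y^k) = \Ao^{*}\Ao(x^k - x) + \Ao^{*}(y - y^k) \to 0$.

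Finally, passing to the limit along the subsequence, weak--weak continuity of $\Ao^{*}\Ao$ gives $\Ao^{*}\Ao x^{k(\ell)} \rightharpoonup \Ao^{*}\Ao x^\mpi$ while $\Ao^{*} y^{k(\ell)} \to \Ao^{*} y$, so $\Ao^{*}(\Ao x^\mpi - y) = 0$; as $\Ao x^\mpi - y \in \ran(\Ao)$ and simultaneously lies in $\ker(\Ao^{*}) = \overline{\ran(\Ao)}^{\bot}$, it vanishes, i.e. $\Ao x^\mpi = y$. If the solution of $\Ao x = y$ is unique, the standard subsequence principle (every subsequence has a further subsequence converging weakly to a solution, necessarily the same one) upgrades this to $x^k \rightharpoonup x^\mpi$. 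I expect the residual estimate $\Ao^{*}\Ao(x^k - x) \to 0$ to be the crux: the cocoercivity of the gradient step, available precisely because $\ga < 2/\norm{\Ao}^2$, must substitute for the variational minimality and be balanced against the blow-up of $1/(1-L_k)$ and $1/L_k^2$ through \ref{def:D4} and the parameter rule.
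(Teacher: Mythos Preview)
The paper does not prove this lemma; it is quoted as an external result, with the sentence immediately preceding it reading ``In \cite{ebner2022plug}, the following results for PnP as a regularization method, notably the first of its kind, have been derived.'' There is therefore no in-paper argument to compare your proposal against.

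Your self-contained argument is essentially correct and follows the natural contraction\,/\,cocoercivity route: Banach's fixed point theorem for existence, the standard perturbation bound for stability, and for convergence the estimate $(1-L_k)\norm{x^k-x}\le L_k\ga\norm{\Ao}\delta_k+\norm{\Do_{\al_k}(x)-x}$ combined with \ref{def:D4} and the parameter rule for boundedness, then cocoercivity of $G_{y^k}$ to extract $\Ao^{*}\Ao(x^k-x)\to 0$. Two small points are worth tightening. First, you invoke $L_k\to 1$ without justification; it does follow from \ref{def:D1} and \ref{def:D2} provided $\E$ contains at least two distinct points $u\neq v$, since then $\norm{\Do_{\al_k}(u)-\Do_{\al_k}(v)}\to\norm{u-v}$ forces $\liminf L_k\ge 1$, while $L_k<1$ gives the other inequality. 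Second, the squaring step implicitly assumes $\norm{x^k-x}\ge\eta_k$; on the complementary subsequence $\norm{x^k-x}<\eta_k\to 0$ and the residual estimate is trivial, so a one-line case distinction closes this. It is also worth noting that your route never uses \ref{def:D3}; whether the original proof in \cite{ebner2022plug} is organized the same way or relies on \ref{def:D3} for a different identification of the weak limit cannot be read off from the present paper.
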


\subsection{Reducing non-linear filtered DFD to PnP}

Building upon Lemma~\ref{lem:main}, we apply Proposition~\ref{lem:PnP} with $\Do_\al = \prox_{\ga \Rc_\al}$ and $\Ao = \Mo_\kao$. To ensure the admissibility of the family of denoisers $(\Do_\al)_{\al>0}$, we impose assumptions on the non-linear regularizing filter, contingent on the quasi-singular values and, consequently, on the operator.
One crucial condition for ensuring the admissibility of denoisers is contractiveness. Enforcing $\prox{\ga \Rc_\al}$ to be contractive means that the regularizing functional $\Rc_\al$ has to be strongly convex, and in turn, all $\qone_{\al,\la}$ have to be strongly convex.

\begin{assumption}[Reduction to PnP regularization]\label{ass:C}
Let $(\ph_{\al})_{\al > 0}$ be a non-linear regularizing  filter and  $\ga \in (0, 1/\max_\la \ka^2_\la)$ such that   for all $\al > 0$  there exist  $\ell_{\al} \in (0,1)$ with
\begin{enumerate} [label=(C\arabic*), leftmargin =2.5em]
\item\label{ass:C1} $\forall \la \in \La \colon \Lip(\ph_{\al}(\ka_\la,\cdot)) \leq \ga \ka_\la^2 \ell_{\al}  / ( 1-\ell_{\al}(1-\ga \ka_\la^2) )$.
\item\label{ass:C2} $\forall \la \in \La \colon  \exists C \in [1, \infty) \colon 1/(1-\ell_\al) \geq C\, \Rightarrow \, \abs{\ph_{\al}(\ka_\la,x)} \geq \frac{\ga \ka_\la^2 (1- C (1- \ell_\al))}{1 - (1- C (1- \ell_\al))(1- \ga \ka_\la^2)} \sabs{x}$.
\end{enumerate}
\end{assumption}

Next we give an example  of a family $(\ph_\al)_{\al > 0}$ that satisfies Assumption~\ref{ass:C} and  does not satisfy Assumption~\ref{ass:A}.

\begin{example}\label{ex:caseC}
Set $\ell_\al = 1 / (1+\al)$ and consider the function
\[
\ph_\al(\ka,x)= \begin{cases} \frac{\ga \ka^2 \ell_{\al}}{ 1-\ell_{\al}(1-\ga \ka^2) } \, x & \sabs{x} \leq \al / 3 \\
\frac{\ga \ka^2 \ell_{\al}}{ 1-\ell_{\al}(1-\ga \ka^2) } \, \sign(x) \al / 3  & \al/3 \leq \sabs{x} \leq 2 \al/3  \\
\frac{\ga \ka^2 \ell_{\al}}{ 1-\ell_{\al}(1-\ga \ka^2) } \, (x - \sign(x) \al / 3 ) & \sabs{x} \geq  2 \al /3 \,.
\end{cases}
\]
Then $(\ph_\al)_{\al > 0}$ is a non-linear regularizing filter and satisfies Assumption \ref{ass:C}.
Since $(\abs{\ph_{\al}(\ka,x)})_{\al > 0}$ is not monotone for some $x \in \R$, $(\ph_\al)_{\al > 0}$ does not satisfy Assumption~\ref{ass:A}.

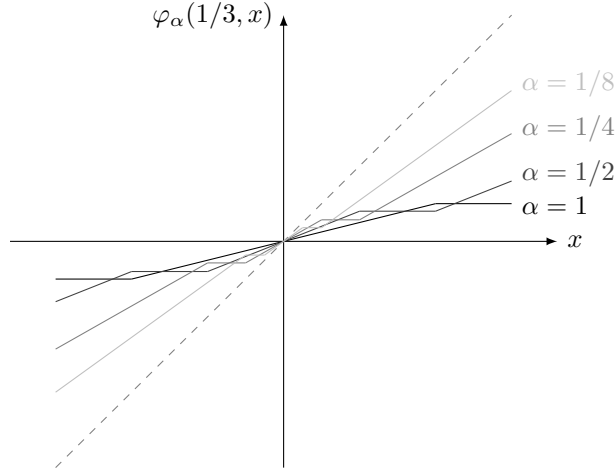
\begin{figure}[htb!]
\centering
\begin{tikzpicture}[>=latex, x=6cm, y=6cm, font=\small]
\draw[->] (-0.6, 0) -- (0.6, 0) node[right] {$x$};
\draw[->] (0, -0.5) -- (0, 0.5) node[left] {$\ph_\al(1/3,x)$};
\draw[dashed, gray] (-0.5, -0.5) -- (0.5, 0.5);
\draw[scale=1, domain=-(1/3*1):(1/3*1), smooth, variable=\x, black] plot ({\x}, {1/4*\x});
\draw[scale=1, domain=(1/3*1):0.5, smooth, variable=\x, black] plot ({\x}, {1/4*1/3});
\draw[scale=1, domain=-0.5:-(1/3*1), smooth, variable=\x, black] plot ({\x}, {1/4*-1/3});
\node at (0.5,0.08) [right, black] {$\al=1$};
\draw[scale=1, domain=-(1/3*1/2):(1/3*1/2), smooth, variable=\x, darkgray] plot ({\x}, {2/5*\x});
\draw[scale=1, domain=(1/3*1/2):(2/3*1/2), smooth, variable=\x, darkgray] plot ({\x}, {2/5*1/6});
\draw[scale=1, domain=-(2/3*1/2):-(1/3*1/2), smooth, variable=\x, darkgray] plot ({\x}, {2/5*-1/6});
\draw[scale=1, domain=(2/3*1/2):0.5, smooth, variable=\x, darkgray] plot ({\x}, {2/5*(\x-1/6)});
\draw[scale=1, domain=-(2/3*1/2):-0.5, smooth, variable=\x, darkgray] plot ({\x}, {2/5*(\x+1/6)});
\node at (0.5,0.15) [right, darkgray] {$\al=1/2$};
\draw[scale=1, domain=-(1/3*1/4):(1/3*1/4), smooth, variable=\x, gray] plot ({\x}, {4/7*\x});
\draw[scale=1, domain=(1/3*1/4):(2/3*1/4), smooth, variable=\x, gray] plot ({\x}, {4/7*1/12});
\draw[scale=1, domain=-(2/3*1/4):-(1/3*1/4), smooth, variable=\x, gray] plot ({\x}, {4/7*-1/12});
\draw[scale=1, domain=(2/3*1/4):0.5, smooth, variable=\x, gray] plot ({\x}, {4/7*(\x-1/12)});
\draw[scale=1, domain=-0.5:-(2/3*1/4), smooth, variable=\x, gray] plot ({\x}, {4/7*(\x+1/12)});
\node at (0.5,0.25) [right, gray] {$\al=1/4$};
\draw[scale=1, domain=-(1/3*1/8):(1/3*1/8), smooth, variable=\x, lightgray] plot ({\x}, {8/11*\x});
\draw[scale=1, domain=(1/3*1/8):(2/3*1/8), smooth, variable=\x, lightgray] plot ({\x}, {8/11*1/24});
\draw[scale=1, domain=-(2/3*1/8):-(1/3*1/8), smooth, variable=\x, lightgray] plot ({\x}, {8/11*-1/24});
\draw[scale=1, domain=(2/3*1/8):0.5, smooth, variable=\x, lightgray] plot ({\x}, {8/11*(\x-1/24)});
\draw[scale=1, domain=-0.5:-(2/3*1/8), smooth, variable=\x, lightgray] plot ({\x}, {8/11*(\x+1/24)});
\node at (0.5,0.35) [right, lightgray] {$\al=1/8$};
\end{tikzpicture}
\caption{Illustration of the filter  $(\ph_\al)_{\al > 0}$ from Example~\ref{ex:caseC} that  satisfies Assumption~\ref{ass:C} and does not satisfy Assumption~\ref{ass:A}.}
\end{figure}

\end{example}

Now, we can successfully simplify the non-linear filtered DFD to PnP regularization.

\begin{lemma}[Reduction to PnP regularization] \label{lem:PnP-reduce}
Let $(\ph_{\al})_{\al > 0}$ be non-linear regularizing filter satisfying Assumption \ref{ass:C}.
Then, $(\prox_{\ga \Rc_\al})_{\al > 0}$  is an admissible  family  of denoisers  in the sense of Definition \ref{def:denoisers}.
\end{lemma}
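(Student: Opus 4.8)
The plan is to verify the four defining properties \ref{def:D1}--\ref{def:D4} of an admissible family of denoisers for $\Do_\al \coloneqq \prox_{\ga \Rc_\al}$. By Lemma~\ref{lem:main} the functional $\Rc_\al = \bigoplus_{\la \in \La} \qone_{\al,\la}(\ka_\la(\cdot))$ lies in $\Ga_0(\ell^2(\La))$, and by Lemma~\ref{lem:prox}\ref{lem:prox6} the operator $\Do_\al$ acts componentwise as $\Do_\al((x_\la)_\la) = (\prox_{\ga \qone_{\al,\la}(\ka_\la(\cdot))}(x_\la))_\la$, each component map being increasing, nonexpansive, and fixing $0$ (since $\ga \qone_{\al,\la}(\ka_\la(\cdot)) \geq 0$ vanishes at $0$). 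The engine throughout is the passage between $\ph_\al(\ka_\la,\cdot) = \prox_{\qone_{\al,\la}}$ and its scaled counterpart $\prox_{\ga \qone_{\al,\la}(\ka_\la(\cdot))}$, carried out on the real line via the subgradient identity $\prox^{-1} = \id + \partial(\cdot)$: Assumption~\ref{ass:C1} will control the contraction factor, and Assumption~\ref{ass:C2} the displacement.

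For \ref{def:D1}, separability gives $\Lip(\Do_\al) = \sup_\la \Lip(\prox_{\ga \qone_{\al,\la}(\ka_\la(\cdot))})$, so it suffices to bound each component uniformly in $\la$. Condition~\ref{ass:C1} states that $\ph_\al(\ka_\la,\cdot)=\prox_{\qone_{\al,\la}}$ is Lipschitz with constant $\ga\ka_\la^2\ell_\al/(1-\ell_\al(1-\ga\ka_\la^2))$, which (using $\ph_\al(\ka_\la,0)=0$) is exactly the hypothesis of Lemma~\ref{prox-scaled} with $t=\ell_\al$. Reading this through subgradients, a Lipschitz constant $L$ for $\prox_{\qone_{\al,\la}}$ makes $\partial\qone_{\al,\la}$ strongly monotone with modulus $1/L-1$; composing with $\ka_\la(\cdot)$ and scaling by $\ga$ multiplies the modulus by $\ga\ka_\la^2$, and the crux algebra $\ga\ka_\la^2(1/L-1)=(1-\ell_\al)/\ell_\al$ makes $\id+\partial(\ga\qone_{\al,\la}(\ka_\la(\cdot)))$ strongly monotone with modulus $1/\ell_\al$. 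Inverting, $\prox_{\ga\qone_{\al,\la}(\ka_\la(\cdot))}$ is $\ell_\al$-Lipschitz, and since $\ell_\al$ is independent of $\la$ we get $\Lip(\Do_\al)\leq\ell_\al<1$.

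Conditions \ref{def:D2} and \ref{def:D3} rest on two facts: the pointwise convergence $\prox_{\ga\qone_{\al,\la}(\ka_\la(\cdot))}(t)\to t$ as $\al\to0$ for every $t$, and the uniform domination $\sabs{t-\prox_{\ga\qone_{\al,\la}(\ka_\la(\cdot))}(t)}\leq 2\sabs{t}$ from nonexpansiveness and fixing $0$. The pointwise convergence follows from \ref{def:filter4} and two applications of Lemma~\ref{lem:inv-conv}: first \ref{def:filter4} gives $\ph_\al(\ka_\la,\cdot)^{-1}(t)=t+\partial\qone_{\al,\la}(t)\to\{t\}$, whence $\partial\qone_{\al,\la}(t)\to\{0\}$; consequently $\prox_{\ga\qone_{\al,\la}(\ka_\la(\cdot))}^{-1}(t)=t+\ga\ka_\la\partial\qone_{\al,\la}(\ka_\la t)\to\{t\}$, and Lemma~\ref{lem:inv-conv} applied to the scaled family yields $\prox_{\ga\qone_{\al,\la}(\ka_\la(\cdot))}(t)\to t$. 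For \ref{def:D2} with $x\in\E\subseteq\ell^2(\La)$, dominated convergence (majorant $4\sabs{x_\la}^2$) gives $\snorm{\Do_\al(x)-x}\to0$. For \ref{def:D3}, fixed bounded $B\subseteq\E$ and $z\in\ell^2(\La)$, I would split $\inner{\Do_\al(x)-x}{z}$ at an index $N$: the tail is at most $2\sup_{x\in B}\snorm{x}\,(\sum_{\la>N}\sabs{z_\la}^2)^{1/2}$, uniformly small in $x$ and $\al$ since $z\in\ell^2(\La)$, while the finite head tends to $0$ because the nonexpansive maps converge pointwise to the continuous identity, hence uniformly on the bounded range of the $x_\la$.

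The main obstacle is \ref{def:D4}, where the vanishing of $1-\Lip(\Do_\al)\geq 1-\ell_\al$ as $\al\to0$ must be matched by an equally fast decay of $\snorm{x-\Do_\al(x)}$; this is precisely the purpose of Assumption~\ref{ass:C2}. Its lower bound on $\ph_\al(\ka_\la,\cdot)=\prox_{\qone_{\al,\la}}$ has the same functional form as the bound in \ref{ass:C1} but with $\ell_\al$ replaced by $\bar\ell\coloneqq 1-C(1-\ell_\al)$, so the subgradient computation of the \ref{def:D1} step, run in reverse (now bounding $\partial\qone_{\al,\la}$ from above), yields the reverse estimate $\sabs{\prox_{\ga\qone_{\al,\la}(\ka_\la(\cdot))}(t)}\geq\bar\ell\,\sabs{t}$. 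As the component maps are increasing and fix $0$, this gives $\sabs{t-\prox_{\ga\qone_{\al,\la}(\ka_\la(\cdot))}(t)}\leq(1-\bar\ell)\sabs{t}=C(1-\ell_\al)\sabs{t}$ whenever $1/(1-\ell_\al)\geq C$; combining with the crude bound $\sabs{t-\prox_{\ga\qone_{\al,\la}(\ka_\la(\cdot))}(t)}\leq 2\sabs{t}$ in the complementary range of $\al$ gives, uniformly in $\al$, the displacement bound $\sabs{x_\la-\prox_{\ga\qone_{\al,\la}(\ka_\la(\cdot))}(x_\la)}\leq 2C(1-\ell_\al)\sabs{x_\la}$. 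Summing over $\la$ and dividing by $1-\ell_\al\leq 1-\Lip(\Do_\al)^{-1}$ then produces $\snorm{x-\Do_\al(x)}/(1-\Lip(\Do_\al))\leq 2(\sum_\la C^2\sabs{x_\la}^2)^{1/2}\eqqcolon M_x$. The delicate point, and where membership $x\in\E$ is invoked, is to ensure that this weighted sum is finite, i.e. that the constants furnished by \ref{ass:C2} do not destroy square-summability; verifying this and the exact constant matchings in the two subgradient translations is where the real work lies.
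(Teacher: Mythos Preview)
Your argument is correct and largely parallels the paper's proof: both verify \ref{def:D1}--\ref{def:D4} componentwise, translate between $\prox_{\qone_{\al,\la}}$ and $\prox_{\ga\qone_{\al,\la}(\ka_\la(\cdot))}$ via the subgradient identity, use \ref{ass:C1} (through the algebra of Lemma~\ref{prox-scaled}) to obtain the uniform contraction factor $\ell_\al$ for \ref{def:D1}, pass through Lemma~\ref{lem:inv-conv} twice for the pointwise convergence underlying \ref{def:D2}, and extract the displacement bound $\sabs{t-\prox_{\ga\qone_{\al,\la}(\ka_\la(\cdot))}(t)}\leq C(1-\ell_\al)\sabs{t}$ from \ref{ass:C2} for \ref{def:D4}. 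The one genuine deviation is your treatment of \ref{def:D3}: the paper reuses the very same displacement estimate from \ref{ass:C2} and concludes $\sabs{\sinner{\Do_\al(x)-x}{z}}\leq C(1-\ell_\al)\snorm{x}\snorm{z}\to 0$ uniformly over bounded $B$, whereas you avoid \ref{ass:C2} entirely by splitting the inner product into a tail (controlled uniformly by $z\in\ell^2$) and a finite head (controlled by equicontinuity plus pointwise convergence, hence uniform convergence on compacts). Your route is more elementary and shows that \ref{ass:C2} is really only needed for \ref{def:D4}; the paper's route is shorter once the displacement bound is in hand and gives an explicit rate in $1-\ell_\al$.

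Two small corrections: in your \ref{def:D4} paragraph you write ``dividing by $1-\ell_\al\leq 1-\Lip(\Do_\al)^{-1}$'', but the inverse is spurious --- you mean $1-\ell_\al\leq 1-\Lip(\Do_\al)$, which is what makes the quotient with $1-\Lip(\Do_\al)$ dominated by the one with $1-\ell_\al$. And your instinct about the ``delicate point'' is well placed: the quantifier order in \ref{ass:C2} is $\forall\la\,\exists C$, so a priori $C=C_\la$ and the finiteness of $\sum_\la C_\la^2\sabs{x_\la}^2$ must be argued (the paper's proof in fact writes a single $C$ here, so you are being at least as careful as the original).
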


\begin{proof}
Let $y \in \R$. For $\qone \in \Ga_0(\R)$ and $\ga \ka^2<1$ we have
\begin{multline*}
\prox_{\qone}^{-1}(y) = y + \frac{1}{\ga}\partial \ga \qone(y) = y + \frac{1}{\ga\ka}\partial(\ga \qone(\ka (\cdot))) \left(\frac{y}{\ka}\right) \\
= y + \frac{1}{\ga\ka}(\prox_{\ga \qone(\ka\cdot)}^{-1} - \id ) \left(\frac{y}{\ka}\right)  = y - \Bigl(\frac{y}{\ga \ka^2}\Bigr) + \frac{1}{\ga\ka} \prox_{\ga \qone(\ka (\cdot))}^{-1} \left(\frac{y}{\ka}\right)
\end{multline*}
and therefore $\prox_{\ga \qone(\ka (\cdot))}^{-1} (y) - y = \ga \ka  ( \prox_{\qone}^{-1}(\ka y)- \ka y )$.
Now let $x \in \R$ be arbitrary.
If $\abs{\prox_{\qone}(x)} \geq (\ga \ka^2 t)/(1 + t(1- \ga \ka^2))) \sabs{x}$  we derive  $\abs{z - \ka y} \leq ((1-t)/(t \ga \ka)) \sabs{x}$ for all $z \in \prox_{\qone}^{-1}(\ka y)$.
Similar to the proof of Lemma \ref{prox-scaled} it follows that
\begin{equation} \label{eq:est1}
\bigl\lvert x-\prox_{\ga \qone(\ka (\cdot))}(x) \bigr\rvert \leq (1-t) \sabs{x} \,.
\end{equation}
Also following the proof of Lemma \ref{prox-scaled} one can show that for all $t \in [0,1)$,
\begin{align*}
&\forall x,y \in \R    \colon \abs{\prox_\qone(x)-\prox_\qone(y)} \leq \frac{\ga \ka^2 t}{1- t(1-\ga \ka^2)} \sabs{x-y} \\
& \qquad
\Rightarrow
\forall x,y \in \R \colon \bigl\lvert \prox_{\ga \qone(\ka (\cdot))}(x)-\prox_{\ga \qone(\ka (\cdot))}(y) \bigr\rvert  \leq t \sabs{x-y} \,.
\end{align*}

Let $c = (c_\la)_\la , d=(d_\la)_\la \in \ell^2(\La)$ be arbitrary and $\al > 0$ be fixed.
Then,
\begin{align*}
\snorm{\prox_{\ga \Rc_\al}(c) - \prox_{\ga \Rc_\al}(d)}^2 &= \sum_{\la \in \La} \bigl\lvert \prox_{\ga \qone_{\al,\la}(\ka_\la (\cdot))}(c_\la)-\prox_{\ga \qone_{\al,\la}(\ka_\la (\cdot))}(d_\la) \bigr\rvert^2  \\
&\leq \sum_{\la \in \La} \ell_{\al}^2 \abs{c_\la-d_\la}^2 = \ell_\al^2 \norm{c-d}^2.
\end{align*}
Since $\ell_\al^2 < 1$, the operator $\prox_{\ga \Rc_\al}$ is a contraction for all $\al> 0$, which is \ref{def:D1}.
To show \ref{def:D2}, let  $x =(x_\la)_\la \in \ell^2(\La)$.
By assumption, for all $y \in \R$ and for all $ \la \in \La$, we have $\lim_{\al \to 0} \ph_{\al}(\ka_\la,y) = y$ and by Lemma \ref{lem:inv-conv}, we also have $\lim_{\al \to 0} \ph_{\al}(\ka_\la,\cdot)^{-1}(y) = \{y\}$.
It follows that
\begin{equation*} \prox^{-1}_{\ga \qone_{\al,\la}(\ka_\la (\cdot) )}(y) = \ka_\la \ph_{\al}(\ka_\la,\cdot)^{-1}(\ka_\la y ) - \ka_\la^2 y + y  \rightarrow \{\ka_\la^2 y - \ka_\la^2 y + y\} = \{y\} \quad \text{ as } \al \to 0 \,.
\end{equation*}
Since $\prox_{\ga \qone_{\al,\la}(\ka_\la (\cdot) )}$ is increasing and nonexpansive with $\prox_{\ga \qone_{\al,\la}(\ka_\la (\cdot) )}(0)=0$, according to Lemma~\ref{lem:inv-conv}, it holds $\lim_{\al \to 0} \prox_{\ga \qone_{\al,\la}(\ka_\la (\cdot) )}(y) = y$ for all $y \in \R$.
Now let $\epsilon > 0$. Then there exists a finite subset $\Omega \subseteq \La$ such that $\sum_{\la \in \La\setminus\Omega} \sabs{x_\la}^2 < \epsilon /2$ and there exist $\tilde{\al} >0$ small enough such that for all $\al< \tilde{\al}$, we have
\[\sum_{\la \in \Omega} \bigl\lvert \prox_{\ga \qone_{\al,\la}(\ka_\la (\cdot) )}(x_\la)-x_\la \bigr\rvert^2 < \frac{\epsilon}{2}. \]
From Assumption~\ref{def:filter} it follows that $\sabs{\prox_{\ga \qone_{\al,\la}(\ka_\la (\cdot) )}(y)-y} \leq \abs{y}$, hence we have
\begin{multline*}
\snorm{\prox_{\ga \Rc_\al}(x)-x}^2 = \sum_{\la \in \Omega} \bigl\lvert\prox_{\ga \qone_{\al,\la}(\ka_\la (\cdot) )}(x_\la)-x_\la\bigl\lvert^2 + \sum_{\la \in \La\setminus\Omega} \bigl\lvert\prox_{\ga \qone_{\al,\la}(\ka_\la (\cdot) )}(x_\la)-x_\la \bigr\rvert^2 \\
 \leq \sum_{\la \in \Omega} \bigl\lvert \prox_{\ga \qone_{\al,\la}(\ka_\la (\cdot) )}(x_\la)-x_\la \bigr\rvert^2 + \sum_{\la \in \La\setminus\Omega} \sabs{x_\la}^2 < \frac{\epsilon}{2} + \frac{\epsilon}{2} = \epsilon \,.
\end{multline*}
Hence $\lim_{\al \to 0} \prox_{\ga \Rc_\al}(x) = x$ for all $x \in \ell^2(\La)$.
Since $\prox_{\ga \Rc_\al} \rightarrow \id$, we have
$1= \lim_{\al \to 0} \Lip(\prox_{\Rc_\al}) \leq \lim_{\al \to 0} \ell_\al \leq 1$ which is \ref{def:D2}. Let $z \in \ell^2(\La)$ and $B$ be a bounded set.
By \eqref{eq:est1} for $\al$ small enough we have
\begin{multline*}
\sabs{\sinner{\prox_{\ga \Rc_\al}(x)-x}{z}} \leq \sum_{\la \in \La} \sabs{\prox_{\ga \qone_{\al,\la}(\ka_\la (\cdot) )}(x_\la) - x_\la} \abs{ z_\la} \\
\leq \sum_{\la \in \La}  C (1-\ell_\al) \sabs{x_\la} \abs{z_\la} \leq C (1- \ell_\al) \norm{x} \snorm{z} \to 0
\end{multline*}
since $\ell_\al \rightarrow 1$ as $\al \to 0$. This shows \ref{def:D3}.
By \eqref{eq:est1} for all $x \in \ell^2(\La)$ and $\al$ small enough we have
\begin{multline*}
\frac{\norm{\prox_{\ga \Rc_\al}(x)-x}^2}{(1- \Lip(\prox_{\ga \Rc_\al}))^2}
 =\frac{\sum_{\la \in  \La} \sabs{\prox_{\ga \qone_{\al,\la}(\ka_\la (\cdot) )}(x_\la) - x_\la}^2}{(1- \ell_{\al})^2} \\
 \leq \frac{\sum_{\la \in \La} C^2 (1- \ell_\al)^2 \sabs{x_\la}^2}{(1- \ell_{\al})^2} = C^2 \norm{x}^2 \,
\end{multline*}
which shows \ref{def:D4}.
\end{proof}

\begin{proposition} \label{prop:caseB}
Let $(\ph_\al)_{\al > 0}$ be a non-linear regularizing filter such that Assumption \ref{ass:C} is satisfied.
Suppose $\al,\al^k > 0$, and $z, z^k \in \ell^2(\La)$ for $k \in \N$ with $(z^k)_{k \in \N} \to z$.
\begin{itemize}
\item Existence: $\dom(\Mo_{\kao}^\mpi \circ \Phi_{\al, \kao}) = \ell^2(\La)$.
\item Stability: $\Mo_{\kao}^\mpi \circ \Phi_{\al, \kao} (z^k) \to \Mo_{\kao}^\mpi \circ \Phi_{\al, \kao} (z)$ as $k \to \infty$.
\item Convergence: Assume $z =\Mo_{\kao}c^\mpi$ with $z \in \dom(\Phi_{\al, \kao}^{-1})$ for all $\al> 0$ and suppose $\snorm{z^k -z} \leq \delta_k$. Let  $\delta_k, \al_k \to 0$ and $(1-\ell_{\al_k})/\ell_{\al_k} \gtrsim \delta_k$. Then $\Mo_{\kao}^\mpi \circ \Phi_{\al_k, \kao} (z^k)  \rightharpoonup c^\mpi$ as $k \to \infty$.
\end{itemize}
\end{proposition}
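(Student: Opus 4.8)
The plan is to reduce the entire statement to the plug-and-play theory of Lemma~\ref{lem:PnP}, applied with the linear operator $\Ao = \Mo_\kao$ and the denoiser family $\Do_\al \coloneqq \prox_{\ga \Rc_\al}$. By Lemma~\ref{lem:PnP-reduce} this family is admissible in the sense of Definition~\ref{def:denoisers}, and since $\norm{\Mo_\kao} = \sup_\la \ka_\la$, the choice $\ga \in (0, 1/\max_\la \ka_\la^2)$ from Assumption~\ref{ass:C} satisfies $\ga \in (0, 2/\norm{\Mo_\kao}^2)$, so Lemma~\ref{lem:PnP} applies. The bridge between the two formulations is the fixed-point identity \eqref{eq:main3} of Lemma~\ref{lem:main},
\[
\Mo^\mpi_\kao \circ \Phi_{\al, \kao} (z) = \fix(\To_{\al,z}), \qquad \To_{\al,z} = \prox_{\ga \Rc_\al} \circ (\id - \ga \nabla \norm{\Mo_\kao(\cdot) - z}^2/2),
\]
which I would use to transfer every PnP assertion back to $\Mo^\mpi_\kao \circ \Phi_{\al,\kao}$.

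For Existence, the PnP existence statement yields a unique fixed point of $\To_{\al,z}$ for every $z \in \ell^2(\La)$; by the identity above this fixed point equals $\Mo^\mpi_\kao \circ \Phi_{\al,\kao}(z)$, hence $\dom(\Mo^\mpi_\kao \circ \Phi_{\al,\kao}) = \ell^2(\La)$. For Stability, the PnP stability statement gives $\fix(\To_{\al,z^k}) \to \fix(\To_{\al,z})$ in norm, which translates directly into the claimed strong convergence. For Convergence I would first check the parameter condition: the proof of Lemma~\ref{lem:PnP-reduce} shows $\Lip(\Do_\al) \le \ell_\al$, and since $t \mapsto (1-t)/t$ is decreasing, $(1-\Lip(\Do_{\al_k}))/\Lip(\Do_{\al_k}) \ge (1-\ell_{\al_k})/\ell_{\al_k} \gtrsim \delta_k$, which is exactly the rate required by Lemma~\ref{lem:PnP}. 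Applying PnP convergence with $y = z = \Mo_\kao c^\mpi$, a subsequence of $\Mo^\mpi_\kao \circ \Phi_{\al_k,\kao}(z^k)$ converges weakly to a solution of $\Mo_\kao x = z$; since $\Mo_\kao$ is injective this solution is unique and equals $c^\mpi$, so the whole sequence converges weakly to $c^\mpi$.

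The step I expect to be the main obstacle is verifying the remaining hypothesis of the PnP convergence part, namely $c^\mpi \in \E = \bigcup_{\al>0} \ran(\Do_\al)$, starting only from $z \in \dom(\Phi_{\al,\kao}^{-1})$ for all $\al > 0$. Here I would invoke the componentwise inverse formula obtained in the proof of Lemma~\ref{lem:PnP-reduce},
\[
\prox^{-1}_{\ga \qone_{\al,\la}(\ka_\la(\cdot))}(y) = \ka_\la\, \ph_\al(\ka_\la,\cdot)^{-1}(\ka_\la y) - \ka_\la^2 y + y,
\]
evaluated at $y = c^\mpi_\la = z_\la/\ka_\la$, so that $\ka_\la y = z_\la$. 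Choosing $w \in \ell^2(\La)$ with $\Phi_{\al,\kao}(w) = z$ (which exists since $z \in \dom(\Phi_{\al,\kao}^{-1}) = \ran(\Phi_{\al,\kao})$) produces a candidate preimage $u_\la = \ka_\la w_\la - \ka_\la^2 c^\mpi_\la + c^\mpi_\la$ satisfying $\prox_{\ga\Rc_\al}(u) = c^\mpi$.

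The delicate point is that $u$ must lie in $\ell^2(\La)$, and this is where $\sup_\la \ka_\la < \infty$ is essential: since $w, c^\mpi \in \ell^2(\La)$ and $\kao$ is bounded, each of the three terms $\ka_\la w_\la$, $\ka_\la^2 c^\mpi_\la$, $c^\mpi_\la$ defines an $\ell^2(\La)$ sequence, so $u \in \ell^2(\La)$. This places $c^\mpi \in \ran(\Do_\al)$ for every $\al > 0$, in particular $c^\mpi \in \E$, and completes the reduction to Lemma~\ref{lem:PnP}.
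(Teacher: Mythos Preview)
Your proposal is correct and follows exactly the paper's approach: the paper's proof is the single line ``Follows from Lemmas~\ref{lem:PnP-reduce} and~\ref{lem:PnP} and the injectivity of $\Mo_{\kao}$,'' and you have simply spelled out the details of that reduction, including the verification of $c^\mpi \in \E$ via the componentwise preimage formula and the parameter-rate check. The only point worth flagging is a minor circularity in your existence argument (identity~\eqref{eq:main3} is stated for $z$ already in the domain), but this is easily resolved by the componentwise diagonal structure of $\To_{\al,z}$, which forces its $\ell^2$ fixed point to agree entrywise with $\Mo^\mpi_\kao \circ \Phi_{\al,\kao}(z)$.
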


\begin{proof}
Follows from Lemmas \ref{lem:PnP-reduce} and  \ref{lem:PnP} and the injectivity of $\Mo_{\kao}$.
\end{proof}

\begin{remark}[Same parameter choice]
Opting for $\ell_\al = 1/(1+\sqrt{\al})$, the conventional parameter choice $\delta_k^2/\al_k \rightarrow 0$ suffices. 
\end{remark}

\subsection{PnP denoiser as a diagonal operator}

Since we can express the filtered DFD in the form of plug-and-play regularization, and convergence is demonstrated under weak assumptions in the direct analysis of Section \ref{sec:conv}, this extends to a version of PnP that goes beyond the scope of \cite{ebner2022plug}.

After diagonalizing the operator $\Ao = \To_{\bar \vo} \Mo_{\kao} \To_{\uo}^*$ with a frame decomposition, we apply PnP to regularize the discontinuous diagonal operator $\Mo_{\kao}$. This involves considering the fixed points of $\Do_\al \circ \left(\id - \ga \Mo_{\kao} (\Mo_\kao (\cdot) - z) \right) \colon \ell^2(\La) \to \ell^2(\La)$. Assuming that $\Do_\al$ is also diagonal, meaning it consists of functions $d_{\al,\la} \colon \R \to \R$, then $\Do_\al((c_\la)_{\la \in \La}) = (\doo_{\al,\la}(c_\la) )_{\la \in \La}$.

Now let $\doo_{\al,\la}$ satisfy the following conditions:

\begin{enumerate}
\item $\forall \al, \la \colon \doo_{\al,\la}$ is monotonically increasing, nonexpansive, and $d_{\al,\la}(0)=0$.
\item $\forall x, \la \colon  (\abs{\doo_{\al,\la}(x)})_{\al > 0}$ is monotonically increasing for $\al\downarrow 0$ and converges to $x$.
\item $ \exists d, e> 0$ $\forall x, \al, \la \colon \sabs{x} \leq d \al / \ka_\la^2 \Rightarrow \abs{\doo_{\al,\la}(x)} \leq\left(1+\ga \ka_\la^2\left(\frac{e \sqrt{\al}}{\ka_\la}-1\right)\right)^{-1} \sabs{x}$.
\end{enumerate}
Then, as in Lemma \ref{lem:main}\ref{lem:main1}, there  exist $r_{\al,\la} \in \Gamma_0(\R)$ with $r_{\al,\la} \geq r_{\al,\la}(0)=0$ such that $\doo_{\al,\la} = \prox_{r_{\al,\la}}$.
One verifies that
\begin{align*}
\fix(\doo_{\al,\la} \circ (\id - \ga \ka_\la(\ka_\la (\cdot) - z_\la))) &= \argmin_{x \in \R} \abs{\ka_\la x - z_\la}^2/2 + \ga^{-1} r_{\al,\la}(x)\\
&= \ka_\la^{-1} \argmin_{x \in \R} \abs{x - z_\la}^2/2 + \ga^{-1} r_{\al,\la}(x/\ka_\la)\\
&= \ka_\la^{-1} \prox_{\ga^{-1} r_{\al,\la}(\ka_\la^{-1}\cdot)}.
\end{align*}
Hence, this PnP regularization reduces to a non-linear filtered DFD, where $\ph_{\al}(\ka_\la, \cdot) = \prox_{\ga^{-1} r_{\al,\la}(\ka_\la^{-1}\cdot)}$.
Condition (b) for $\doo_{\al,\la}$ directly implies assumption \ref{ass:A1} in Section \ref{sec:conv}, and condition (c) allows us to demonstrate that
\[
\sabs{x} \leq d \al / \ka_\la \Rightarrow \abs{\prox_{\ga^{-1} r_{\al,\la}(\ka_\la^{-1}\cdot)(x)}} \leq\frac{e\sqrt{\al}}{\ka_\la} \sabs{x}.
\]
Now, we obtain the following convergence result.
Let $(\delta_k)_k, (\al_k)_k$  null sequences with $\delta_k^2/\al_k \to 0$ and $y \in \ran(\Ao)$.
Suppose $\To_{\vo}^* y \in \ran(\Do_{\tilde \al})$ for some $\tilde{\al}> 0$ and let $(y^k)_k \in \Y^\N$ with $\snorm{y^k -y} \leq \delta_k$.
Define $c^k \coloneqq \fix ( \Do_{\al_k} \circ ( \id - \ga \nabla  \snorm{\Mo_{\kao} (\cdot) - \To_\vo^*(y^k)}^2/2 )$, then $ \To_{\bar \uo} c^k \rightharpoonup \Ao^\mpi y$ as $k \to \infty$.

\section{Conclusion}

\label{sec:conclusion}

In this paper, we have analyzed non-linear diagonal frame filtering for the regularization of inverse problems. Compared to the previously analyzed linear filters, non-linear filters can better exploit the specific structure of the target signal and the noise. We introduced assumptions on the non-linear regularizing filters to prove stability and weak convergence in the context of regularization theory, and established connections to variational regularization and PnP regularization. Future research might focus on convergence in terms of norm topology and the derivation of convergence rates.

\appendix

\section{Remaining proofs}

\label{app:proofs}

In this  appendix  we provide  proofs of technical lemmas  stated in Section \ref{sec:lemmas}.

\subsection{Proof of Lemma \ref{lem:inv-conv}}

\label{app:proof1}

Since $\ph_\al$ is increasing and $\ph_\al(0)=0$, the pre-image is either empty or a closed interval of the form $\ph_\al^{-1}(x)=[p_\al,q_\al]$ depending on $\al$ where we have three cases depending on $x$.
If $x \geq 0$ then $0 \leq p_\al \leq q_\al < \infty$, if $x \leq 0$ then $-\infty < p_\al \leq q_\al \leq 0$ and if $x=0$ then $\infty < p_\al \leq 0 \leq q_\al < \infty$.
Furthermore, since $\ph_\al$ is nonexpansive we have $\abs{p_\al}, \abs{q_\al} \geq \sabs{x}$.

Suppose \ref{lem:inv-conv1} holds.
By assumption we have $0 \in \ph_{\al}^{-1}(0)$.
Let $x \neq 0$ and suppose there is a null sequence $\al_k$ such that $\forall k \in \N \colon \ph_{\al_k}^{-1}(x) = \emptyset$ .
Since $\ph_{\al_k}$ is continuous, we have $\abs{\ph_{\al_k}(2x)} < \sabs{x}$ otherwise it contradicts the intermediate value theorem and therefore $\abs{\ph_{\al_k}(2x)-2x} > \sabs{x}$ for all $k \in \N$, which is a contradiction. Thus, for all $x \in \R$ there exists $\tilde{\al}> 0$ such that $\forall \al \in (0,\tilde{\al}) \colon \ph_\al^{-1}(x) \neq \emptyset$.  Now fix any $x > 0$. Then for all $\al < \tilde{\al}$
\[\sup_{y \in \ph_\al^{-1}(x)} \abs{y-x} = \sup_{y \in [p_\al,q_\al]} \abs{y-x} = \abs{q_\al-x} \]
Suppose that $q_\al \nrightarrow x$, then $\limsup_{\al \to 0} q_\al \eqqcolon q > x$ (and $q < \infty$, otherwise there would exist a null sequence $\al_k$ such that $\lim_{k \to \infty} \ph_{\al_k}(y) = y \leq x$ for all $y \in \R$, which is a contradiction).
Then there exists a null sequence $\al_k$ such that $ \abs{\ph_{\al_k}(q_{\al_k})-\ph_{\al_k}(q)} \leq \abs{q_{\al_k} - q} \to 0$
as $k \to \infty$. But $\abs{\ph_{\al_k}(q_{\al_k})-\ph_{\al_k}(q)} = \abs{x-\ph_{\al_k}(q)} \rightarrow \abs{x-q} > 0$, as $k \to \infty$, which is a contradiction.

To show the converse implication, suppose  that \ref{lem:inv-conv2} holds and let $x \in \R$ be fixed.
With  $z \coloneqq \ph_\al(x)$, we have $x \in \ph_\al^{-1}(z)$  and
$ \abs{\ph_\al(x)-x} = \abs{z-x} \leq \sup_{x \in \ph_\al^{-1}(z)}\abs{z-x} \to 0$.

\subsection{Proof of Lemma \ref{lem:r-estimate}}

 \label{app:proof2}

Fix $b,c > 0$ such that \eqref{eq:ass-restimate}  is satisfied.
We start by considering the case where $y \geq 0$ with $y \leq c \ka$.
Since $\qone$ is nonnegative and $\qone(0)=0$, we have $\prox_{\al \qone}(0)=0$ and $\prox_{\al \qone}$ is positive for positive arguments  and negative for negative arguments.
For  $y \in [0,c\ka]$ we have $\prox_{\al s}^{-1}(y) \leq \min(\prox_{\al s}^{-1}(c \ka))$ and hence from \eqref{eq:ass-restimate} it follows $
\prox^{-1}_{\al \qone}(y) \geq y (\ka^2+\al b)/\ka^2$.
Note that $\prox^{-1}_{\al \qone}(y)$ can be set-valued in which case these inequalities are understood  to be  satisfied for all elements.
By Lemma~\ref{lem:prox}\ref{lem:prox1} we get $(\id + \al \partial \qone)(y) \geq y + y \al b/\ka^2$ which  is equivalent to  $\partial \qone(y) \geq yb /\ka^2 = \partial (y^2 b / (2 \ka^2))$. With  $\qone(0)=0 $ we get
$\qone(y) \geq (b/2) \cdot \sabs{y/\ka}^2$ for all $y \in [0, c \ka]$.
Now consider the case $y > c\ka$.
Since $\qone$ is convex, the sub-gradients increase.
At point $\ka c $ the sub-gradients of $\qone$ are grater than $bc /\ka$, hence for all $y \geq c\ka$ we have $\qone(y) \geq y \, bc / \ka + a $ and by taking  $y= \ka c$ we get $a \geq - bc^2/2$.  Similar arguments for $y<0$ complete the proof.

\subsection{Proof of Lemma \ref{prox-scaled}}

\label{app:proof3}

Let $y \in \R$ be arbitrary.
We have
\begin{multline*}
\prox_{\qone}^{-1}(y)
= y + \frac{1}{\ga}\partial \ga \qone(y)
= y + \frac{1}{\ga\ka}\partial(\ga \qone(\ka (\cdot)))  (y/\ka ) \\
= y + \frac{1}{\ga\ka}(\prox_{\ga \qone(\ka(\cdot))}^{-1} - \id ) ( y/\ka )
= y - y/(\ga \ka^2 ) + \frac{1}{\ga\ka} \prox_{\ga \qone(\ka (\cdot))}^{-1} (y/\ka).
\end{multline*}
Now let $\sabs{x} \leq \ga \al$ and define $a \coloneqq \ka \prox_{\ga\qone(\ka (\cdot))}(x)$. Then,
\begin{equation*}
b \coloneqq \ka \bigl(1-  1/(\ga\ka^2)\bigr) \prox_{\ga \qone(\ka (\cdot))}(x) + x/(\ga \ka) \in \prox_\qone^{-1}(a)
\end{equation*}
and $\abs{b} = (\ga \ka)^{-1} \sabs{x-(1-\ka^2\ga)\prox_{\ga \qone(\ka (\cdot))}(x)} \leq \sabs{x}/(\ga \ka) \leq \al/\ka$.
Thus, we have
\begin{multline*}
\bigl\lvert \prox_{\ga \qone(\ka (\cdot))}(x) \bigr\rvert   = \Bigl\lvert \frac{a}{\ka} \Bigr\rvert = \frac{1}{\ka} \abs{\prox_\qone(b)} \leq \frac{\ga \ka t}{1-t(1-\ga \ka^2)} \abs{b} \\
\leq \frac{t}{1-t(1-\ga \ka^2)} \bigl(\sabs{x} - (1-\ga \ka^2) \lvert\prox_{\ga \qone(\ka (\cdot))}(x) \rvert \bigr) \,,
\end{multline*}
from which it follows that $t^{-1} \sabs{\prox_{\ga \qone(\ka (\cdot))}(x)}  \leq \sabs{x}$.

\end{document}